\documentclass[11pt]{amsart}

\usepackage{amssymb}
\usepackage{mathtools}
\usepackage[numbers,sort&compress]{natbib}
\usepackage{xcolor}
\usepackage[margin=1.4in]{geometry}
\usepackage{hyperref}

\hypersetup{
  pdfauthor={Qian Ai, Feng Guo, Qi Zhou},
  pdftitle={A Functional Central Limit Theorem for Window Counts of Hardy--Szeg\H{o} Zeros},
  pdfsubject={Functional central limit theorem for window counts of Hardy--Szeg\H{o} zeros},
  pdfkeywords={Brillinger mixing, determinantal point process, functional central limit theorem, Hardy--Szeg\H{o} Gaussian analytic function, projected zero process}
}

\title[FCLT for Hardy--Szeg\H{o} zero counts]
{A Functional Central Limit Theorem for Window Counts of Hardy--Szeg\H{o} Zeros}

\author{Qian Ai}
\address{School of Mathematical Sciences, Soochow University,
Suzhou 215006, P. R. China}
\email{20234007001@stu.suda.edu.cn}

\author{Feng Guo}
\address{School of Mathematics, Nanjing University of Aeronautics and Astronautics,
Nanjing 210016, P. R. China}
\email{70207994@nuaa.edu.cn}

\author{Qi Zhou}
\thanks{Q. Z. was supported by National Natural Science Foundation of China
(No.~12501162), China Postdoctoral Science Foundation (No.~2024M762280),
and Natural Science Foundation of Jiangsu Province (No.~BK20250832).}
\address{School of Mathematical Sciences, Soochow University,
Suzhou 215006, P. R. China}
\email{zhouqi@suda.edu.cn}

\subjclass[2020]{60G55, 60F17}

\keywords{Brillinger mixing; determinantal point process;
functional central limit theorem; Hardy--Szeg\H{o} Gaussian analytic function;
projected zero process}

\numberwithin{equation}{section}

\theoremstyle{plain}
\newtheorem{theorem}{Theorem}[section]
\newtheorem{lemma}[theorem]{Lemma}
\newtheorem{proposition}[theorem]{Proposition}
\newtheorem{corollary}[theorem]{Corollary}

\theoremstyle{definition}
\newtheorem{definition}[theorem]{Definition}
\newtheorem{example}[theorem]{Example}

\theoremstyle{remark}
\newtheorem{remark}[theorem]{Remark}

\begin{document}

\begin{abstract}
The Hardy--Szeg\H{o} zero process, investigated in the disk by Peres and Vir\'ag through the independent identically distributed Gaussian analytic function, is a canonical conformally invariant determinantal point process. This paper studies its upper half-plane realization. Although conformally equivalent to the disk model, this realization has its own natural geometry: real-translation invariance turns the process into a stationary object along the boundary and makes long horizontal windows the natural observables. For every admissible height window, we prove a Donsker-type functional central limit theorem for the centered zero counts in expanding horizontal windows, with an explicit intensity and variance depending on the height window. The proof is based on factorial cumulants and Brillinger mixing. The main technical input is a family of all-order integrability estimates for reduced cumulant densities, obtained by exploiting the determinantal cycle structure before integrating over the height variables. As further consequences, we derive an explicit covariance density, an asymptotic variance formula, and a macroscopic Gaussian white-noise limit for linear statistics.
\end{abstract}

\maketitle

\section{Introduction and main results}
\label{sec:introduction}

\noindent\textbf{The Hardy--Szeg\H{o} model and the upper-half-plane geometry.}
The Hardy--Szeg\H{o} zero process is a canonical conformally invariant
determinantal point process arising from zeros of Gaussian analytic functions.
Its disk realization was studied by Peres and Vir\'ag~\cite{PeresVirag2005}
as the zero set of the independent identically distributed Gaussian power
series on the unit disk.  In this paper we study the same conformally invariant
process in its upper-half-plane realization, which makes a different probabilistic geometry explicit: the process is
stationary under real translations.  This turns horizontal strips and long
boundary-parallel windows into natural observables, and leads to fluctuation
questions that are not transparent in disk coordinates.

Let
\(\mathbb H:=\{z\in\mathbb C:\operatorname{Im} z>0\},\)
and let \(\mathrm dA\) denote planar Lebesgue measure.  We consider the
determinantal point process on \((\mathbb H,\mathrm dA)\) with Hermitian kernel
\(K_{\mathbb H}(z,w)
        =
        -\frac{1}{\pi(z-\overline w)^2},
        ~ z,w\in\mathbb H.\)
Equivalently, this process is the zero set of the Hardy--Szeg\H{o} Gaussian
analytic function
\(f_{\mathbb H}(z)
        =
        (\varphi'(z))^{1/2}
        \sum_{n=0}^{\infty}\xi_n\varphi(z)^n,
        ~ z\in\mathbb H,\)
where \((\xi_n)_{n\geq0}\) are independent standard complex Gaussian random
variables, \(\varphi:\mathbb H\to\mathbb D\) is the Cayley map
\(\varphi(z):=\frac{z-i}{z+i},\)
and a holomorphic branch of \((\varphi')^{1/2}\) is fixed.  The explicit
kernel and the horizontal stationarity are the two structural features used
throughout the paper.  A more detailed account of the analytic function and
its zero process is given in Section~\ref{sec:model-projection}.

\medskip

\noindent\textbf{Long horizontal windows.}
The upper-half-plane realization suggests a natural macroscopic question:
how do the zeros fluctuate in a window whose horizontal length tends to
infinity while its height range remains fixed?  For a height window
\(I\subset(0,\infty)\), define
\[
        N_I(L)
        :=
        \#\{z\in\mathcal Z_{\mathbb H}:
        0\leq \operatorname{Re}z\leq L,\ 
        \operatorname{Im}z\in I\}.
\]
Thus \(N_I(L)\) counts zeros in the horizontal window \([0,L]\times I\).
Because the process is invariant under real translations, this is a stationary
one-dimensional counting problem along the boundary direction, but with
nontrivial dependence coming from the two-dimensional determinantal structure.

Our main result is a Donsker-type functional central limit theorem for these
window counts.  For every admissible height window \(I\), the centered counting
path
\(\bigl((N_I(Lt)-\lambda_I Lt)/L^{1/2}\bigr)_{0\leq t\leq1}\)
converges in distribution, as \(L\to\infty\), to Brownian motion with variance
parameter \(\sigma_I^2\).  The intensity \(\lambda_I\) and the variance
\(\sigma_I^2\) are given explicitly in terms of the height window.  In
particular, the result identifies both the order of the fluctuations and the
precise limiting variance for every admissible height range.

\medskip

\noindent\textbf{Method and further consequences.}
The proof is based on factorial cumulants and Brillinger mixing.  The main
technical step is to prove all-order integrability estimates for the reduced
cumulant densities associated with the zero process.  We do this by exploiting
the determinantal cycle structure before integrating over the height variables.
This order of analysis is important: the cancellations and decay needed for
summability are most transparent at the level of the determinantal cycles.

The same estimates also yield several quantitative consequences.  We derive an
explicit covariance density for the horizontal counting process, an asymptotic
variance formula for long windows, and a macroscopic Gaussian white-noise limit
for linear statistics.  These results show that the upper-half-plane
realization is not only a convenient coordinate representation of the disk
model, but also a natural setting in which stationary long-window asymptotics
can be formulated and proved.

\medskip

\noindent\textbf{Related work.}
The Hardy--Szeg\H{o} zero process belongs to the general theory of Gaussian
analytic functions and their zero sets; see the monograph of Hough, Krishnapur, Peres and Vir\'ag~\cite{HoughKrishnapurPeresVirag2009}.
Stationary Gaussian analytic function models and their fluctuation theory
provide a particularly close point of comparison; see, for example,
Feldheim~\cite{Feldheim2013}. The process is also closely connected to the
theory of determinantal point processes and to fluctuation problems for random
zeros. Related developments include work on correlation asymptotics,
equidistribution, central limit theorems, and rigidity-type phenomena for
random analytic zeros; see, for example,
\cite{ShiffmanZelditch1999,BleherShiffmanZelditch2000,Sodin2000,
SodinTsirelson2004,NazarovSodinVolberg2008,Shiffman2008,
Krishnapur2009,NazarovSodin2011}. This broader line of work is rooted in the
classical study of zeros of random polynomials; see, for example,
\cite{Hammersley1956,Friedman1990,BogomolnyBohigasLeboeuf1992,Kostlan1993,EdelmanKostlan1995}.

\medskip

\noindent\textbf{Determinantal structure and projection.}
There is also a second motivation for studying horizontal windows.  They arise
from a natural projection of a planar determinantal point process.  For a height
window \(I\subset(0,\infty)\), let
\(
\mathcal X_I
:=
\sum_{z\in\mathcal Z_{\mathbb H}:\operatorname{Im}z\in I}
\delta_{\operatorname{Re}z}
\)
be the horizontal projection of the zeros whose heights lie in \(I\).  Then
\(N_I(L)=\mathcal X_I([0,L]).\)
Thus the long-window problem may be viewed as a fluctuation problem for the
one-dimensional projected process \(\mathcal X_I\).

This viewpoint connects the present work with a general issue in the theory of
determinantal point processes.  Determinantal processes are powerful because
their correlation functions have explicit determinant forms and many
probabilistic quantities can be analyzed through operator identities; see, for
example,
\cite{Macchi1975,Soshnikov2000,Lyons2003,ShiraiTakahashi2003,
Borodin2011,LavancierMollerRubak2015}.  This structure, however, is not
automatically preserved under geometric operations.  Projection is particularly
delicate, since it collapses one coordinate and may change the local structure
of the process.  The projected process \(\mathcal X_I\) therefore provides a
concrete setting in which one can ask how much of the planar determinantal
structure remains usable after a natural one-dimensional observation.

For the Hardy--Szeg\H{o} zero process this loss of structure is genuine.
Although \(\mathcal Z_{\mathbb H}\) is determinantal in the plane,
Proposition~\ref{prop:not-translation-invariant-dpp} shows that
\(\mathcal X_I\) does not admit a Hermitian translation-invariant locally
trace-class determinantal representation on \(\mathbb R\).  Consequently, the
standard determinantal central limit theorems of
Costin and Lebowitz~\cite{CostinLebowitz1995} and
Soshnikov~\cite{Soshnikov2000Gaussian,Soshnikov2002} do not apply directly.
For related fluctuation results in planar determinantal settings, see
\cite{RiderVirag2007H1Noise,RiderVirag2007CircularLaw,
AmeurHedenmalmMakarov2011}.  Our strategy is instead to work before projection:
we use the planar determinantal cycle expansion first, and only afterwards
integrate over the height variables.

\subsection{Main results}
\label{subsec:main-results}

We first specify the class of height windows.  The condition below guarantees
that the projected process has finite intensity on bounded intervals and
controls the growth of the planar zero density near \(\partial\mathbb H\).

\begin{definition}
A height window \(I\subset(0,\infty)\), not necessarily an interval, is called admissible if \(\int_I y^{-2}\,\mathrm d y<\infty.\)
\end{definition}

Our first result is the all-order cumulant estimate that drives the paper.
We use the standard terminology of reduced factorial cumulants and Brillinger
mixing for stationary point processes; the relevant conventions are recalled
in Appendix~\ref{app:cumulants}.  Brillinger mixing, namely the finite total
variation of all reduced factorial cumulant measures of order at least two,
is a useful framework for proving Gaussian limits for dependent point
processes through cumulant bounds; see, for example,
\cite{Brillinger1972,Ivanoff1982,Janson1988,DaleyVereJones2003,DaleyVereJones2008,BiscioLavancier2016}.  In the present setting, the estimate below
is the substitute for the unavailable one-dimensional determinantal structure.

\begin{theorem}
\label{thm:brillinger-mixing}
Assume that \(I\subset(0,\infty)\) is admissible and has positive Lebesgue measure. Then \(\mathcal{X}_I\) is Brillinger mixing. More precisely, for every integer \(k\geq2\), the \(k\)-th reduced factorial cumulant measure of \(\mathcal{X}_I\) is absolutely continuous with respect to Lebesgue measure on \(\mathbb{R}^{k-1}\), with density \(c_I^{(k)}\in L^1(\mathbb{R}^{k-1})\).
Moreover,
\[
\left\|c_I^{(k)}\right\|_{L^1(\mathbb{R}^{k-1})}
\leq
\frac{(k-1)!}{2^{k+1}k\pi}
\left(
\int_I y^{-(1+1/k)}\,\mathrm{d}y
\right)^k.
\]
\end{theorem}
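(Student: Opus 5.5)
The plan is to compute the reduced factorial cumulant densities of \(\mathcal X_I\) explicitly from the planar determinantal structure, bound them by a cycle-wise absolute value, and integrate out the horizontal variables \emph{before} the heights. For a determinantal process with Hermitian kernel \(K\), the \(k\)-th factorial cumulant density is the signed sum over cyclic permutations:
\[
\kappa_k(z_1,\dots,z_k)=(-1)^{k-1}\sum_{\sigma\ \text{a $k$-cycle}}\prod_{j=1}^k K(z_j,z_{\sigma(j)}) .
\]
There are \((k-1)!\) such cycles. Projecting onto real parts and restricting heights to \(I\), the \(k\)-th factorial cumulant measure of \(\mathcal X_I\) has density \(\int_{I^k}\kappa_k(x_1+iy_1,\dots,x_k+iy_k)\,\mathrm dy\) on \(\mathbb R^k\). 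This density depends only on the differences \(x_j-x_1\), by real-translation invariance of \(K_{\mathbb H}\) (note \(|K_{\mathbb H}|\) and the cycle products are invariant under \(z\mapsto z+t\)). The reduced density \(c_I^{(k)}\) on \(\mathbb R^{k-1}\) is obtained by fixing \(x_1=0\). Absolute continuity is then automatic once we show the absolute integrand is integrable, which also justifies all uses of Fubini.

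By the triangle inequality, \(\|c_I^{(k)}\|_{L^1}\) is at most \((k-1)!\) times the integral of one cycle's absolute product, by symmetry over relabellings. For the cycle \(1\to2\to\cdots\to k\to1\) we use
\[
|K_{\mathbb H}(z,w)|=\frac{1}{\pi\,((x-x')^2+(y+y')^2)}=:p_{a}(x-x'), \qquad a=y+y'.
\]
Here \(p_a(t)=1/(\pi(t^2+a^2))\) has Fourier transform \(\widehat{p_a}(\xi)=e^{-a|\xi|}/a\). With heights fixed, the integral over \(x_2,\dots,x_k\) (with \(x_1=0\)) of \(\prod_j p_{a_j}(x_j-x_{j+1})\) is a \(k\)-fold cyclic convolution evaluated at \(0\), where \(a_j=y_j+y_{j+1}\) with indices mod \(k\). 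By Plancherel/Fourier inversion it equals
\[
\frac1{2\pi}\int_{\mathbb R}\prod_{j}\frac{e^{-a_j|\xi|}}{a_j}\,\mathrm d\xi=\frac{1}{\pi\sum_j a_j}\prod_j\frac1{a_j}=\frac{1}{2\pi\sum_j y_j}\prod_{j}\frac{1}{y_j+y_{j+1}} .
\]
This is the point where doing the cycle structure first pays off: the horizontal integration is exact and produces only a rational function of the heights.

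It remains to integrate over \(I^k\), and AM--GM does everything. We have \(y_j+y_{j+1}\ge 2\sqrt{y_jy_{j+1}}\), so \(\prod_j(y_j+y_{j+1})\ge 2^k\prod_j y_j\), since each \(y_j\) appears in exactly two factors. Also \(\sum_j y_j\ge k(\prod_j y_j)^{1/k}\). Hence the cycle integrand is at most \(\frac{1}{2^{k+1}k\pi}\prod_j y_j^{-1-1/k}\), which factorizes. Multiplying by \((k-1)!\) gives exactly the claimed bound \(\frac{(k-1)!}{2^{k+1}k\pi}\bigl(\int_I y^{-(1+1/k)}\mathrm dy\bigr)^k\).

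Finiteness of the right-hand side follows from admissibility. On \(I\cap(0,1]\) we have \(y^{-1-1/k}\le y^{-2}\), and on \([1,\infty)\) the function \(y^{-1-1/k}\) is integrable. Since this holds for every \(k\ge2\), all reduced factorial cumulant measures have finite total variation, which is Brillinger mixing. The positive-measure assumption only ensures \(\mathcal X_I\) is nontrivial with positive finite intensity.

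The main obstacle I expect is bookkeeping rather than analysis. One must check the sign and cycle-counting conventions for factorial cumulants of determinantal processes against Appendix~\ref{app:cumulants}, and justify the reduction to \(x_1=0\), i.e.\ the disintegration of a translation-invariant measure on \(\mathbb R^k\). Both are handled by Tonelli applied to the nonnegative majorant computed above.
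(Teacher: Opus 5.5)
Your proposal is correct and follows essentially the same route as the paper. The paper likewise expands the planar cumulant density into determinantal cycles and evaluates the horizontal integral along each cycle exactly as a convolution at $0$ via Fourier transform, obtaining $1/(\pi\prod_\ell\alpha_\ell\sum_\ell\alpha_\ell)$ with $\alpha_\ell$ the sums of consecutive heights. It then applies the same two AM--GM bounds, multiplies by $|\operatorname{Cyc}_k|=(k-1)!$, and gets finiteness from admissibility exactly as in Lemma~\ref{lem:admissibility-lp}.
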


This estimate converts the planar determinantal cycle expansion into
one-dimensional mixing information for the horizontal process.  The next result
identifies the second-order quantities that determine the limiting Gaussian
variance.

\begin{theorem}
\label{thm:variance-formula}
Assume that \(I\subset(0,\infty)\) is admissible and has positive Lebesgue measure. Then \(\mathcal{X}_I\) has intensity
\(\lambda_I:=\int_I (4\pi y^2)^{-1}\,\mathrm{d}y\), and reduced covariance density
\[
g_I(r)
=
-\frac{1}{\pi^2}
\int_I\int_I
\frac{1}{\left(r^2+(y_1+y_2)^2\right)^2}
\,\mathrm{d}y_1\,\mathrm{d}y_2,
\qquad r\in\mathbb{R}.
\]
Moreover, \(g_I\in L^1(\mathbb{R})\), and
\[
\lim_{L\to\infty}
\frac{\operatorname{Var}(N_I(L))}{L}
=
\sigma_I^2
:=
\lambda_I+\int_{\mathbb{R}}g_I(r)\,\mathrm{d}r
>0.
\]
\end{theorem}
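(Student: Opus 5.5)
We first compute the one- and two-point correlation functions of \(\mathcal Z_{\mathbb H}\) directly from the kernel, and then push them through the projection \(z=x+iy\mapsto x\).

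\textbf{Intensity.} On the diagonal,
\[
K_{\mathbb H}(z,z)=-\frac{1}{\pi(2iy)^2}=\frac{1}{4\pi y^2}.
\]
Integrating over heights in \(I\) gives the intensity \(\lambda_I=\int_I(4\pi y^2)^{-1}\,\mathrm dy\). This is finite because \(I\) is admissible, and positive because \(|I|>0\).

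\textbf{Covariance density.} The second correlation function is \(\rho_2(z,w)=K(z,z)K(w,w)-|K(z,w)|^2\). Hence the truncated (cumulant) density of \(\mathcal Z_{\mathbb H}\) is \(-|K(z,w)|^2\). Write \(z=x_1+iy_1\) and \(w=x_2+iy_2\). Then
\[
z-\overline w=(x_1-x_2)+i(y_1+y_2),
\]
so
\[
|K(z,w)|^2=\frac{1}{\pi^2\bigl((x_1-x_2)^2+(y_1+y_2)^2\bigr)^2}.
\]
This depends only on \(r=x_1-x_2\), by real-translation invariance. The second factorial cumulant measure of \(\mathcal X_I\) is the image of this truncated density under projection. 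Disintegrating in \(x_1\) and integrating \(y_1,y_2\) over \(I\) by Tonelli (all terms are nonnegative) yields exactly the stated \(g_I(r)\).

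\textbf{Integrability of \(g_I\).} We could cite Theorem~\ref{thm:brillinger-mixing} with \(k=2\), but it is also a direct computation. Using \(\int_{\mathbb R}(r^2+a^2)^{-2}\,\mathrm dr=\pi/(2a^3)\),
\[
\int_{\mathbb R}|g_I(r)|\,\mathrm dr=\frac{1}{2\pi}\int_I\int_I\frac{\mathrm dy_1\,\mathrm dy_2}{(y_1+y_2)^3}.
\]
By AM--GM, \((y_1+y_2)^3\ge 2^3(y_1y_2)^{3/2}\). So the right-hand side is at most \(\frac{1}{16\pi}\bigl(\int_I y^{-3/2}\,\mathrm dy\bigr)^2\), which matches the \(k=2\) case of Theorem~\ref{thm:brillinger-mixing}. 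This is finite: near \(0\), \(y^{-3/2}\le y^{-2}\) for \(y\le1\), and near infinity, \(\int_{I\cap[1,\infty)} y^{-3/2}\,\mathrm dy\) converges.

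\textbf{Variance asymptotics.} For a stationary process with integrable covariance density,
\[
\operatorname{Var}N_I(L)=\lambda_I L+\int_0^L\!\!\int_0^L g_I(x_1-x_2)\,\mathrm dx_1\,\mathrm dx_2=\lambda_I L+\int_{-L}^{L}(L-|r|)\,g_I(r)\,\mathrm dr.
\]
Dividing by \(L\) and applying dominated convergence (dominating function \(|g_I|\in L^1\)) gives \(\sigma_I^2=\lambda_I+\int g_I\).

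\textbf{Positivity (the main obstacle).} Here \(g_I\le0\), so we must show that the negative correlation does not cancel the intensity. Computing,
\[
\int_{\mathbb R} g_I(r)\,\mathrm dr=-\frac{1}{2\pi}\int_I\int_I(y_1+y_2)^{-3}\,\mathrm dy_1\,\mathrm dy_2 .
\]
The goal is therefore the strict inequality
\[
\int_I\int_I\frac{\mathrm dy_1\,\mathrm dy_2}{(y_1+y_2)^3}<\frac12\int_I\frac{\mathrm dy}{y^2}.
\]
Consider first the case \(I=(0,\infty)\). The inner integral \(\int_0^\infty (y_1+y_2)^{-3}\,\mathrm dy_2=1/(2y_1^2)\) gives equality. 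So in that case the variance is sublinear, consistent with the full determinantal projection being "rigid."

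For general \(I\), bound the inner integral by enlarging the domain:
\[
\int_I (y_1+y_2)^{-3}\,\mathrm dy_2\le\int_0^\infty (y_1+y_2)^{-3}\,\mathrm dy_2=\frac{1}{2y_1^2}.
\]
Equality holds only if \((0,\infty)\setminus I\) is null. If \(I\) has a complement of positive measure, the inequality is strict for every \(y_1\), hence strict after integrating over \(I\) (positive measure). This gives \(\sigma_I^2>0\).

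If \(I\) is all of \((0,\infty)\) up to a null set, then \(\int_I y^{-2}\,\mathrm dy=\infty\), contradicting admissibility. So admissible windows always have a complement of positive measure near \(0\), and positivity holds. The delicate point is exactly this borderline: positivity comes solely from the missing heights, which admissibility guarantees.
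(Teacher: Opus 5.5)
Your proposal is correct and follows essentially the same route as the paper. The intensity comes from $K_{\mathbb H}(z,z)=(4\pi y^2)^{-1}$, and the covariance density from $-|K_{\mathbb H}|^2$ integrated over heights. Integrability uses $\int_{\mathbb R}|g_I|=\frac{1}{2\pi}\int_I\int_I(y_1+y_2)^{-3}$, the variance limit uses dominated convergence in $\lambda_I+\int_{\mathbb R}(1-|r|/L)_+g_I$, and positivity uses $\int_I(y_1+y_2)^{-3}\,\mathrm dy_2<1/(2y_1^2)$. Your remark that admissibility forces $(0,\infty)\setminus I$ to have positive measure (since $\int_0^\infty y^{-2}\,\mathrm dy=\infty$) spells out the step behind that strict inequality, which the paper leaves implicit.
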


\begin{example}
\label{ex:interval-window}
Let \(I=[a,b]\), \(0<a<b<\infty\). Then
\[
\lambda_{I}
=
\frac{b-a}{4\pi ab},
\qquad
\sigma_{I}^{2}
=
\frac{(b-a)(3a+b)}{8\pi ab(a+b)}
>0.
\]
In particular, if \(b\) is fixed and \(a\downarrow0\), then
\(\lambda_{[a,b]}\sim 1/(4\pi a)\) and \(\sigma_{[a,b]}^2\sim 1/(8\pi a)\), reflecting the growth of the planar zero density near \(\partial\mathbb H\).
\end{example}

These constants determine the macroscopic Gaussian limits.
For \(L\geq1\) and \(\varphi\in C_c^\infty(\mathbb{R})\), define
\[
\Xi_L(\varphi)
:=
\frac{1}{\sqrt{L}}
\left(
\int_{\mathbb{R}}\varphi\left(\frac{x}{L}\right)\mathcal{X}_I(\mathrm{d}x)
-
\lambda_I L\int_{\mathbb{R}}\varphi(u)\,\mathrm{d}u
\right).
\]

\begin{theorem}
\label{thm:white-noise-limit}
Assume that \(I\subset(0,\infty)\) is admissible and has positive Lebesgue measure. As \(L\to\infty\), for every integer \(m\geq1\) and every \(\varphi_1,\ldots,\varphi_m\in C_c^\infty(\mathbb{R})\), the random vector
\((\Xi_L(\varphi_1),\ldots,\Xi_L(\varphi_m))\)
converges in distribution to a centered Gaussian vector
\((\Xi(\varphi_1),\ldots,\Xi(\varphi_m))\)
with covariance
\[
\mathbb{E}[\Xi(\varphi)\Xi(\psi)]
=
\sigma_I^2
\int_{\mathbb{R}}\varphi(u)\psi(u)\,\mathrm{d}u.
\]
\end{theorem}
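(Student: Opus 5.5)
We prove Theorem~\ref{thm:white-noise-limit} by the method of cumulants. By the Cramér--Wold device it suffices to treat a single test function. Fix \(\varphi=\sum_{j}\theta_j\varphi_j\in C_c^\infty(\mathbb R)\). We then show two things: \(\operatorname{Var}\Xi_L(\varphi)\to\sigma_I^2\|\varphi\|_{L^2}^2\), and every cumulant of \(\Xi_L(\varphi)\) of order \(k\geq3\) tends to zero. Since the Gaussian law is determined by its moments, this gives convergence in distribution. Bilinearity of the limiting covariance then identifies the cross covariances \(\sigma_I^2\int\varphi_i\varphi_j\).

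\textbf{Variance.} Write \(\varphi_L(x):=\varphi(x/L)\). Stationarity and Theorem~\ref{thm:variance-formula} give
\[
\operatorname{Var}\Bigl(\int\varphi_L\,\mathrm d\mathcal X_I\Bigr)
=\lambda_I\int\varphi_L^2\,\mathrm dx+\lambda_I\int\!\!\int\varphi_L(x)\varphi_L(x+r)g_I(r)\,\mathrm dr\,\mathrm dx .
\]
Here I take the reduced second cumulant density to be \(\lambda_I g_I\), or \(g_I\) itself, according to the normalization fixed in Appendix~\ref{app:cumulants}. In either case it must match \(\sigma_I^2=\lambda_I+\int g_I\), and I will use whichever normalization makes the formula for \(\sigma_I^2\) consistent.

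Dividing by \(L\) and substituting \(x=Lu\), the first term equals \(\lambda_I\|\varphi\|_2^2\). The second becomes \(\int g_I(r)\int\varphi(u)\varphi(u+r/L)\,\mathrm du\,\mathrm dr\). The inner integral tends to \(\|\varphi\|_2^2\) for each fixed \(r\) and is bounded by \(\|\varphi\|_2^2\). Since \(g_I\in L^1\), dominated convergence yields the limit \(\|\varphi\|_2^2\int g_I\). Together the two terms give \(\sigma_I^2\|\varphi\|_2^2\).

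\textbf{Higher cumulants.} The \(k\)-th cumulant of the linear statistic \(S_L=\int\varphi_L\,\mathrm d\mathcal X_I\) expands into factorial cumulant measures. By the standard combinatorial identity, it equals a sum over set partitions of \(\{1,\dots,k\}\) into \(j\) blocks of integrals
\[
\int\prod_{b=1}^{j}\varphi_L^{|B_b|}(x_b)\,\gamma^{(j)}_{[\,]}(\mathrm dx_1\cdots\mathrm dx_j),
\]
where \(\gamma^{(j)}_{[\,]}\) denotes the \(j\)-th factorial cumulant measure. The order-\(1\) term is \(\lambda_I\int\varphi_L^k\).

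By stationarity, \(\gamma^{(j)}_{[\,]}(\mathrm dx_1\cdots\mathrm dx_j)=\mathrm dx_1\,c_I^{(j)}(x_2-x_1,\dots,x_j-x_1)\,\mathrm dx_2\cdots\mathrm dx_j\). Bounding every factor except the \(x_1\)-factor by \(\|\varphi\|_\infty\), each term is at most
\[
\|\varphi\|_\infty^{k-1}\,\|\varphi_L\|_{L^1}\,\|c_I^{(j)}\|_{L^1(\mathbb R^{j-1})}
= L\,\|\varphi\|_\infty^{k-1}\|\varphi\|_1\,\|c_I^{(j)}\|_{L^1}.
\]
This is finite by Theorem~\ref{thm:brillinger-mixing}, which supplies \(c_I^{(j)}\in L^1\) for all \(j\geq2\); the \(j=1\) term is bounded by \(\lambda_I L\|\varphi\|_\infty^{k-1}\|\varphi\|_1\) directly. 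Hence \(|\kappa_k(S_L)|=O(L)\) with a constant depending only on \(k\) and \(\varphi\). Consequently \(\kappa_k(\Xi_L(\varphi))=L^{-k/2}\kappa_k(S_L)=O(L^{1-k/2})\to0\) for \(k\geq3\). The centering affects only \(\kappa_1\), which is zero because the intensity is \(\lambda_I\).

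\textbf{Main obstacle.} The main point to get right is the rigorous justification of the cumulant expansion in terms of factorial cumulant densities. This requires finiteness of all moments of \(S_L\), which holds because \(\varphi_L\) is bounded with compact support and the counts of a determinantal process on bounded sets have exponential moments (the planar counts in \(\operatorname{supp}\varphi_L\times I\) are finite by admissibility). It also requires absolute integrability of each term, which is exactly what Brillinger mixing, i.e.\ Theorem~\ref{thm:brillinger-mixing}, supplies. Beyond that, the argument is the standard Brillinger-mixing CLT; no uniformity in \(k\) is needed because convergence of moments is checked one order at a time.
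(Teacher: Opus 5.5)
Your proposal is correct and follows essentially the same route as the paper: Cram\'er--Wold reduction to a single test function, the variance limit via the linear-statistic variance formula and dominated convergence against \(g_I\in L^1(\mathbb R)\), and \(O(L)\) bounds on all higher cumulants of \(S_L\) from the partition expansion into factorial cumulant measures combined with the \(L^1\)-bounds of Theorem~\ref{thm:brillinger-mixing}, followed by moment determinacy of the Gaussian. The one thing to fix is the hedged normalization: drop the factor \(\lambda_I\) in front of the \(g_I\)-term in your variance display, since in the paper \(g_I\) is itself the reduced covariance density, \(\rho_I^{(2)}(x,y)-\lambda_I^2=g_I(y-x)\), which is the normalization your subsequent computation already uses.
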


The main path-level result is the following Donsker-type functional central
limit theorem, in the sense of the invariance-principle scaling introduced by
Donsker~\cite{Donsker1951}. Define
\(B_L(t)
:=
\frac{N_I(Lt)-\lambda_I Lt}{\sqrt{L}},
~ 0\leq t\leq1.\)

\begin{theorem}
\label{thm:fclt}
Assume that \(I\subset(0,\infty)\) is admissible and has positive Lebesgue measure. As \(L\to\infty\), \(B_L\) converges in distribution in \(D([0,1])\), equipped with the Skorokhod \(J_1\) topology, to Brownian motion \(B=(B(t))_{t\in[0,1]}\) with covariance
\(\mathbb{E}[B(s)B(t)]=\sigma_I^2\min\{s,t\}.\)
\end{theorem}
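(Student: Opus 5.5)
The plan is the standard route: convergence of finite-dimensional distributions plus tightness in \(D([0,1])\), with both steps fed by the cumulant bounds of Theorem~\ref{thm:brillinger-mixing}.

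\emph{Finite-dimensional distributions.} Fix \(0\le t_1<\dots<t_m\le1\). The increments \(B_L(t_j)-B_L(t_{j-1})\) are centered counts of \(\mathcal X_I\) on the disjoint intervals \((Lt_{j-1},Lt_j]\). By the Cram\'er--Wold device it suffices to treat linear combinations \(\sum_j a_j(B_L(t_j)-B_L(t_{j-1}))\). Such a combination equals \(L^{-1/2}\) times the centered integral of the step function \(\varphi=\sum_j a_j\mathbf 1_{(t_{j-1},t_j]}\) at scale \(L\).

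We use the method of cumulants directly for step functions rather than passing through Theorem~\ref{thm:white-noise-limit}, since indicators are not in \(C_c^\infty\).
\begin{itemize}
\item The \(k\)-th cumulant of \(\int\varphi(x/L)\,\mathcal X_I(\mathrm dx)\) expands into a sum over set partitions of factorial cumulant measures. Each block contributes a term of the form
\[
\int \varphi(x/L)^{p}\prod\varphi((x+u_i)/L)\,c_I^{(q)}(u)\,\mathrm dx\,\mathrm du .
\]
\item Each such term is bounded by \(\|\varphi\|_\infty^k\,L\,\|c_I^{(q)}\|_{L^1}\), which is finite by Theorem~\ref{thm:brillinger-mixing}.
\item After dividing by \(L^{k/2}\), every cumulant of order \(k\ge3\) is \(O(L^{1-k/2})\to0\).
\end{itemize}
For the second cumulant we use Theorem~\ref{thm:variance-formula}. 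The covariance of counts on disjoint intervals \(A_L,B_L\) is \(\int_{A_L}\int_{B_L}g_I(x-y)\,\mathrm dx\,\mathrm dy\), which is \(o(L)\) because \(g_I\in L^1\) and only a boundary layer contributes. Variances satisfy \(\operatorname{Var}(N_I(L(t-s)))/L\to\sigma_I^2(t-s)\) by stationarity and Theorem~\ref{thm:variance-formula}. Since the Gaussian law is determined by its moments, convergence of all cumulants yields the Brownian finite-dimensional limits.

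\emph{Tightness.} The limit is continuous, so we verify a moment criterion of Billingsley type (Theorem~13.5 of Billingsley's \emph{Convergence of Probability Measures}). This requires, for \(r\le s\le t\),
\[
\mathbb E\bigl[|B_L(s)-B_L(r)|^2|B_L(t)-B_L(s)|^2\bigr]\le C\,(t-r)^{2}
\]
or a variant with \(F\) continuous. We aim instead for a fourth-moment bound on single increments: for an interval \(J\) of length \(\ell\), we want \(\mathbb E[(\mathcal X_I(J)-\lambda_I\ell)^4]\le C(\ell+\ell^2)\). This follows from the cumulant expansion:
\begin{itemize}
\item \(\kappa_2\le C\ell\) and \(\kappa_4\le C\ell\), using the \(L^1\) bounds on \(c_I^{(2)},c_I^{(3)},c_I^{(4)}\);
\item hence \(\mathbb E[X^4]=\kappa_4+3\kappa_2^2\le C(\ell+\ell^2)\).
\end{itemize}
After scaling, increments of \(B_L\) over \([s,t]\) have fourth moment \(\le C\bigl((t-s)/L+(t-s)^2\bigr)\).

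\emph{Main obstacle.} The term \((t-s)/L\) spoils the direct criterion at scales \(t-s<1/L\), where jumps of size \(L^{-1/2}\) live. To handle this, we first control the process on the grid \(t\in L^{-1}\mathbb Z\). There the criterion holds with exponent 2, since \((t-s)/L\le(t-s)^2\) once \(t-s\ge1/L\); this gives tightness of the grid-interpolated process via a chaining/Kolmogorov-type maximal inequality.

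Between grid points we use monotonicity of counting. For \(s\in[k/L,(k+1)/L]\),
\[
|B_L(s)-B_L(k/L)|\le L^{-1/2}\bigl(\mathcal X_I([k,k+1])+\lambda_I\bigr).
\]
It then suffices to show \(\max_{k\le L}\mathcal X_I([k,k+1])=o_P(L^{1/2})\). By a union bound with the fourth moment,
\[
\mathbb P\bigl(\mathcal X_I([k,k+1])>\varepsilon L^{1/2}\bigr)\le C\varepsilon^{-4}L^{-2},
\]
and summing over \(L\) values of \(k\) gives \(O(L^{-1})\to0\). Combining grid tightness with this oscillation control gives tightness in \(D([0,1])\), with continuous limit points, and hence the theorem.
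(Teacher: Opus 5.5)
Your proposal is correct and follows essentially the same route as the paper. Finite-dimensional convergence comes from the cumulant method applied directly to step functions, which the paper packages as the bounded-test-function extension of the white-noise limit, Corollary~\ref{cor:white-noise-bounded-test-functions}. Tightness comes from fourth-moment increment bounds obtained from the linear cumulant estimates, Kolmogorov--Chentsov applied to the interpolation at grid points $Lt\in\mathbb Z$, and a union bound showing $\max_k \mathcal X_I([k,k+1])=o_P(L^{1/2})$.

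The differences are cosmetic. You identify the limiting covariance through increments on disjoint intervals, with cross-covariances $o(L)$ because $g_I\in L^1$, instead of through Lemma~\ref{lem:macroscopic-covariance-limit}. Your bound $C(\ell+\ell^2)$ is also slightly sharper than the paper's $C(1+\ell)^2$ for short increments.
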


Evaluating the limiting path at \(t=1\) gives the scalar central limit theorem for interval counts.

\begin{corollary}
\label{cor:interval-count-clt}
Assume that \(I\subset(0,\infty)\) is admissible and has positive Lebesgue measure. Then, as \(L\to\infty\),
\((N_I(L)-\lambda_I L)/\sqrt{L}\Rightarrow \mathcal{N}(0,\sigma_I^2).\)
\end{corollary}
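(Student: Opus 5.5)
The corollary follows from Theorem~\ref{thm:fclt} by evaluating the limiting path at \(t=1\). By construction \(B_L(1)=(N_I(L)-\lambda_I L)/\sqrt L\), which is exactly the normalized interval count in the statement. So I would show that the evaluation map \(\pi_1\colon D([0,1])\to\mathbb R\), \(x\mapsto x(1)\), is continuous at every point of the support of the limit law. The continuous mapping theorem then transfers the convergence \(B_L\Rightarrow B\) to \(B_L(1)\Rightarrow B(1)\).

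\textbf{Continuity of \(\pi_1\).} In the Skorokhod \(J_1\) topology on \([0,1]\), the time changes \(\lambda_n\) are increasing homeomorphisms of \([0,1]\), so they fix the endpoint: \(\lambda_n(1)=1\). Suppose \(x_n\to x\) in \(J_1\), meaning there are such \(\lambda_n\) with \(\sup_t|\lambda_n(t)-t|\to0\) and \(\sup_t|x_n(\lambda_n(t))-x(t)|\to0\). Then
\[
|x_n(1)-x(1)|=|x_n(\lambda_n(1))-x(1)|\to 0 .
\]
Hence \(\pi_1\) is continuous on all of \(D([0,1])\), not merely at continuous paths, and no discontinuity set needs to be checked.

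\textbf{Identifying the limit.} The limit \(B(1)\) is a centered Gaussian variable with variance \(\mathbb E[B(1)^2]=\sigma_I^2\min\{1,1\}=\sigma_I^2\), where \(\sigma_I^2>0\) by Theorem~\ref{thm:variance-formula}. Therefore
\[
\frac{N_I(L)-\lambda_I L}{\sqrt L}\Rightarrow\mathcal N(0,\sigma_I^2).
\]

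There is no genuine obstacle here; the only point that needs care is the endpoint-preservation observation above. As an independent check, the same conclusion can be obtained from the method of moments. By Theorem~\ref{thm:brillinger-mixing}, the \(k\)-th cumulant of \(N_I(L)\) is \(O(L)\) for every \(k\ge 3\), so the corresponding cumulant of \(B_L(1)\) is \(O(L^{1-k/2})\to0\). Theorem~\ref{thm:variance-formula} gives that the second cumulant of \(B_L(1)\) converges to \(\sigma_I^2\). Since all cumulants of order at least three vanish in the limit and the variance converges, \(B_L(1)\) is asymptotically Gaussian with variance \(\sigma_I^2\), in agreement with the argument above.
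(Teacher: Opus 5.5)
Your proof is correct and follows the paper's route: apply the continuous mapping theorem to the evaluation map $x\mapsto x(1)$ together with Theorem~\ref{thm:fclt}, and identify $B(1)\sim\mathcal N(0,\sigma_I^2)$. Your observation that $\lambda(1)=1$ for every $\lambda\in\Lambda$, so that evaluation at $1$ is $J_1$-continuous on all of $D([0,1])$, is valid and slightly cleaner than the paper's argument, which uses continuity at paths continuous at $1$ together with almost-sure continuity of $B$; one small correction to your optional moment check is that the $O(L)$ bound on ordinary cumulants comes from Corollary~\ref{cor:linear-ordinary-cumulant-bounds}, not directly from Theorem~\ref{thm:brillinger-mixing}.
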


\subsection{Organization of the paper.}
Section~\ref{sec:model-projection} defines the Hardy--Szeg\H{o} Gaussian analytic function on \(\mathbb{H}\), recalls the determinantal zero process, introduces the projected strip process, and proves its basic structural properties. Section~\ref{sec:correlations-variance} derives the projected correlation and cumulant formulas, identifies the covariance density and variance asymptotics, and proves the small-gap obstruction to one-dimensional Hermitian translation-invariant determinantal representations. Section~\ref{sec:brillinger-mixing} proves the cycle convolution identity, the all-order \(L^1\)-bound for reduced factorial cumulant densities, Brillinger mixing, and the resulting linear cumulant estimates. Section~\ref{sec:gaussian-limits} proves the macroscopic white-noise limit and the functional central limit theorem. Appendices~\ref{app:cumulants} and~\ref{app:tightness} contain, respectively, the point-process cumulant conventions and the tightness tools used in the proof.

\section{The Hardy--Szeg\H{o} zero process and its projection}
\label{sec:model-projection}

In this section we record the Hardy--Szeg\H{o} covariance kernel, the determinantal structure of the zero process, and the basic structural properties of the projected process used later.

\subsection{The Hardy--Szeg\H{o} Gaussian analytic function}
\label{subsec:hardy-szego-gaf}

Recall that the Hardy--Szeg\H{o} Gaussian analytic function on \(\mathbb{H}\) is
\[
f_{\mathbb{H}}(z)
=
(\varphi'(z))^{1/2}
\sum_{n=0}^{\infty}\xi_{n}\varphi(z)^{n}, \qquad z\in\mathbb{H},
\]
where \(\varphi\) is the Cayley map, the branch of \((\varphi')^{1/2}\) is the one fixed above, and \((\xi_n)_{n\geq0}\) are independent standard complex Gaussian random variables. The series converges almost surely locally uniformly on \(\mathbb{H}\), because \(\phi\) maps compact subsets of \(\mathbb{H}\) into compact subsets of \(\mathbb{D}\), and the usual Gaussian power series
\(f_{\mathbb{D}}(u):=\sum_{n=0}^{\infty}\xi_{n}u^{n},
 u\in\mathbb{D},\)
converges almost surely locally uniformly on \(\mathbb{D}\). Thus \(f_{\mathbb{H}}\) is almost surely holomorphic on \(\mathbb{H}\).

The covariance function of \(f_{\mathbb H}\) is \(\mathbb E 
f_{\mathbb H}(z)\overline{f_{\mathbb H}(w)}
=
\frac{i}{z-\overline w}\),
\(z,w\in\mathbb H.\)
Since this covariance kernel is invariant under simultaneous horizontal translations, and \(f_{\mathbb H}\) is centered complex Gaussian, the law of \(f_{\mathbb H}\) is horizontally translation-invariant.

\begin{remark}
\label{rem:square-root-factor}
The factor \((\varphi')^{1/2}\) is the usual conformal covariance factor for the Hardy--Szeg\H{o} kernel; see, for example, \cite{Duren1970,Bell1992} for the classical Hardy-space and conformal mapping background behind this transformation rule. It does not change the zero set, because it is holomorphic and nowhere zero on \(\mathbb{H}\). Its purpose is to transport the Hardy--Szeg\H{o} covariance on the disk to the upper half-plane normalization.
\end{remark}

\subsection{The determinantal zero process on the upper half-plane}
\label{subsec:zero-dpp}

We view \(\mathcal Z_{\mathbb H}\) as a random counting measure on \(\mathbb H\). Since \(f_{\mathbb H}\) is a nonzero Gaussian analytic function, its zeros are almost surely locally finite and, by the standard nondegeneracy theorem for Gaussian analytic functions \cite[Chapter~2]{HoughKrishnapurPeresVirag2009}, simple as planar zeros. Moreover, from Peres and Vir\'ag~\cite[Theorem~1]{PeresVirag2005}, conformal covariance of zeros, and the transformation rule for correlation functions under the Cayley map, the zero process \(\mathcal Z_{\mathbb H}\) is determinantal on \((\mathbb H,\mathrm dA)\), with correlation kernel \(K_{\mathbb H}\). In particular, the one-point intensity of \(\mathcal Z_{\mathbb H}\) is
\begin{equation}
\label{eq:upper-half-plane-one-point-intensity}
\rho_{\mathbb H}^{(1)}(x+iy)
=
\frac{1}{4\pi y^2},
\qquad x\in\mathbb R,\ y>0.
\end{equation}
The zero process is stationary under horizontal translations: for \(a\in\mathbb R\), \(\mathcal Z_{\mathbb H}-a:=\{z-a:z\in\mathcal Z_{\mathbb H}\}
\stackrel{\mathrm d}=
\mathcal Z_{\mathbb H}\).

\subsection{Basic properties of the projected process}
\label{subsec:basic-properties-projected}

Recall that \(\mathcal{X}_{I}\) is obtained by restricting
\(\mathcal{Z}_{\mathbb{H}}\) to the strip
\(
S_I:=\{x+iy\in\mathbb H:x\in\mathbb R,\ y\in I\}
\)
and projecting the remaining zeros onto the real axis.

\begin{lemma}
\label{lem:local-finiteness-intensity}
Let \(I\subset(0,\infty)\) be admissible. Then \(\mathcal{X}_{I}\) is locally finite and, for every bounded Borel set \(B\subset\mathbb{R}\),
\[
\mathbb{E}[\mathcal{X}_{I}(B)] = \lambda_{I}|B|,
\qquad
\lambda_{I}:=\int_{I}\frac{1}{4\pi y^{2}}\,\mathrm{d}y<\infty.
\]
\end{lemma}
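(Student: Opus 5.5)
The plan is to compute the first moment of \(\mathcal X_I(B)\) directly from the one-point intensity \eqref{eq:upper-half-plane-one-point-intensity}, and then deduce local finiteness from finiteness of that first moment.

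For a bounded Borel set \(B\subset\mathbb R\), the definition of the projection gives
\[
\mathcal X_I(B)=\mathcal Z_{\mathbb H}(B\times I),
\qquad
B\times I:=\{x+iy\in\mathbb H:x\in B,\ y\in I\},
\]
and \(B\times I\) is a Borel subset of \(\mathbb H\). By the defining property of the first correlation function (Campbell's formula for \(\mathcal Z_{\mathbb H}\)) and Tonelli's theorem, since the integrand is nonnegative, we get
\[
\mathbb E[\mathcal X_I(B)]
=\int_{B\times I}\rho^{(1)}_{\mathbb H}(x+iy)\,\mathrm dA
=\int_B\int_I\frac{\mathrm dy}{4\pi y^2}\,\mathrm dx
=\lambda_I|B|.
\]
Admissibility, \(\int_I y^{-2}\,\mathrm dy<\infty\), gives \(\lambda_I<\infty\). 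Note that \(B\times I\) need not be relatively compact in \(\mathbb H\), since \(I\) may accumulate at \(0\) or extend to \(\infty\). So Campbell's formula is applied to a general Borel set, not to a compact one. This is legitimate because \(\mathcal Z_{\mathbb H}\) is a locally finite random measure and monotone convergence extends the identity from compact sets to all Borel sets.

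For local finiteness, take \(B=[-n,n]\). Then \(\mathbb E[\mathcal X_I([-n,n])]=2n\lambda_I<\infty\), so \(\mathcal X_I([-n,n])<\infty\) almost surely. Intersecting these events over \(n\in\mathbb N\) produces a single full-probability event on which every bounded set carries finitely many projected points. On that event \(\mathcal X_I\) is a locally finite counting measure on \(\mathbb R\). Measurability of \(\mathcal X_I\) as a random measure follows because it is the pushforward, under the map \(z\mapsto\operatorname{Re}z\), of the restriction of the random measure \(\mathcal Z_{\mathbb H}\) to the strip \(S_I\).

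The only point needing care, and the main potential obstacle, is that local finiteness of \(\mathcal Z_{\mathbb H}\) on \(\mathbb H\) does not by itself give finiteness on the non-compact strip \(B\times I\), because zeros accumulate near \(\partial\mathbb H\). This is exactly where admissibility enters, through the first-moment bound. The equality \(\mathbb E[\mathcal X_I(B)]=\lambda_I|B|\) also reflects horizontal stationarity, but it is obtained directly from the fact that \(\rho^{(1)}_{\mathbb H}(x+iy)\) does not depend on \(x\).
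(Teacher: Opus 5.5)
Your proposal is correct and follows essentially the same route as the paper: integrate the one-point intensity $1/(4\pi y^2)$ over $B\times I$ to get $\mathbb E[\mathcal X_I(B)]=\lambda_I|B|$, then use admissibility to conclude a.s.\ finiteness. Your extra remarks make explicit what the paper leaves implicit: extending the intensity identity to non-relatively-compact Borel sets by monotone convergence, intersecting over $B=[-n,n]$, and measurability of the pushforward.
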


\begin{proof}
For bounded Borel \(B\subset\mathbb{R}\), set \(R_{B,I}:=\{x+iy\in\mathbb{H}:x\in B,\ y\in I\}\). By \eqref{eq:upper-half-plane-one-point-intensity},
\[
\mathbb{E}[\mathcal{X}_{I}(B)]
=\int_{R_{B,I}}\rho_{\mathbb{H}}^{(1)}(z)\,\mathrm{d}A(z)
=\int_{B}\int_{I}\frac{1}{4\pi y^{2}}\,\mathrm{d}y\,\mathrm{d}x
=\lambda_{I}|B|.
\]
The integral is finite by admissibility, and a nonnegative integer-valued random variable with finite expectation is finite almost surely.
\end{proof}

\begin{lemma}
\label{lem:stationarity-projected}
Let \(I\subset(0,\infty)\) be admissible. Then \(\mathcal{X}_{I}\) is stationary: for every \(a\in\mathbb{R}\), \(m\geq1\), and bounded Borel sets \(B_{1},\ldots,B_{m}\subset\mathbb{R}\), 
\(\left(
\mathcal{X}_{I}(B_{1}+a),\ldots,\mathcal{X}_{I}(B_{m}+a)
\right)
\stackrel{\mathrm{d}}{=}
\left(
\mathcal{X}_{I}(B_{1}),\ldots,\mathcal{X}_{I}(B_{m})
\right).\)
\end{lemma}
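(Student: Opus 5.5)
The plan is to reduce stationarity of \(\mathcal X_I\) to the horizontal stationarity of \(\mathcal Z_{\mathbb H}\) recorded in Subsection~\ref{subsec:zero-dpp}, by expressing each count \(\mathcal X_I(B)\) as a count of planar zeros in a strip region. For a bounded Borel set \(B\subset\mathbb R\), write \(R_{B,I}=\{x+iy\in\mathbb H: x\in B,\ y\in I\}\) as in the proof of Lemma~\ref{lem:local-finiteness-intensity}. By definition of the projection,
\[
\mathcal X_I(B)=\mathcal Z_{\mathbb H}(R_{B,I}).
\]
These counts are almost surely finite by Lemma~\ref{lem:local-finiteness-intensity}. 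Since \(R_{B+a,I}=R_{B,I}+a\), we obtain
\[
\mathcal X_I(B+a)=\mathcal Z_{\mathbb H}(R_{B,I}+a)=(\mathcal Z_{\mathbb H}-a)(R_{B,I}).
\]
Here \(\mathcal Z_{\mathbb H}-a\) denotes the translated counting measure.

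The key steps are as follows. First, note that the map sending a counting measure \(\mu\) on \(\mathbb H\) to the vector \((\mu(R_{B_1,I}),\ldots,\mu(R_{B_m,I}))\) is measurable with respect to the standard \(\sigma\)-algebra on counting measures. Each \(R_{B_j,I}\) is Borel, though not necessarily relatively compact, because \(I\) may accumulate at \(0\) or be unbounded. Measurability still holds, since \(\mu\mapsto\mu(A)\) is measurable for every Borel \(A\), with values in \(\{0,1,\ldots\}\cup\{\infty\}\). Second, apply the distributional identity \(\mathcal Z_{\mathbb H}-a\stackrel{\mathrm d}=\mathcal Z_{\mathbb H}\). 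Understood as equality in law of random counting measures, this transfers through the measurable map to give
\[
\bigl(\mathcal X_I(B_j+a)\bigr)_{j\le m}\stackrel{\mathrm d}=\bigl(\mathcal X_I(B_j)\bigr)_{j\le m}.
\]

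The only point needing care, and the expected obstacle, is that the paper states translation invariance as equality in law of the zero set. One should confirm that this means equality in law of the random counting measures, including on non-relatively-compact Borel sets such as the strips \(R_{B,I}\).

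To settle this, I would argue through the Gaussian function. The law of \(f_{\mathbb H}(\cdot+a)\) equals that of \(f_{\mathbb H}\), as processes on \(\mathbb H\), because the covariance \(i/(z-\overline w)\) is invariant under simultaneous real shifts and both processes are centered complex Gaussian. Their zero counting measures are the images of these processes under the same measurable map, so the zero counting measures agree in law as random elements of the space of locally finite counting measures on \(\mathbb H\). Equality of laws of random measures then yields equality of the joint laws of counts on arbitrary Borel sets. These are first obtained for relatively compact sets via finite-dimensional distributions, and then extended by monotone limits along \(R_{B,I}\cap\{y\ge 1/n\}\cap\{y\le n\}\), using that increasing limits preserve equality in distribution.
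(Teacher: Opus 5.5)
Your proposal is correct and follows the same route as the paper, which deduces the lemma directly from the horizontal stationarity of \(\mathcal Z_{\mathbb H}\) via \(\mathcal X_I(B+a)=\mathcal Z_{\mathbb H}(R_{B,I}+a)=(\mathcal Z_{\mathbb H}-a)(R_{B,I})\). Your extra care about the non-relatively-compact strips \(R_{B,I}\) is sound. The final monotone-limit step is redundant, though: once \(\mathcal Z_{\mathbb H}-a\) and \(\mathcal Z_{\mathbb H}\) agree in law as random counting measures, the vector of evaluations on arbitrary Borel sets is already a measurable functional, so the identity transfers immediately.
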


\begin{proof}
This follows immediately from the stationarity of the zero process \(\mathcal Z_{\mathbb H}\).
\end{proof}

\begin{lemma}
\label{lem:simplicity-projected}
Let \(I\subset(0,\infty)\) be admissible. Then \(\mathcal{X}_{I}\) is simple: almost surely,
\(\mathcal{X}_{I}(\{x\})\in\{0,1\}\) for every \(x\in\mathbb{R}\).
\end{lemma}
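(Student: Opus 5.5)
The plan is to show that, almost surely, no two zeros of \(\mathcal Z_{\mathbb H}\) lying in the strip \(S_I\) share the same real part. Simplicity of \(\mathcal X_I\) fails exactly when \(\mathcal X_I(\{x\})\geq 2\) for some \(x\), so it suffices to prove this statement. We use the second correlation function of the planar process. Since \(\mathcal Z_{\mathbb H}\) is determinantal with kernel \(K_{\mathbb H}\), its two-point correlation function is
\[
\rho^{(2)}(z,w)=K_{\mathbb H}(z,z)K_{\mathbb H}(w,w)-|K_{\mathbb H}(z,w)|^2\leq \rho^{(1)}_{\mathbb H}(z)\,\rho^{(1)}_{\mathbb H}(w).
\]
Because \(\mathcal Z_{\mathbb H}\) is almost surely simple (its zeros are simple), the number of ordered pairs of distinct points is given by the second factorial moment measure.

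The key steps run in order as follows. First, reduce to bounded regions. Fix \(n\) and set \(R_n:=\{x+iy: |x|\leq n,\ y\in I\}\). It suffices to show that, almost surely, no two distinct zeros in \(R_n\) have equal real parts; a countable union over \(n\) then finishes the argument.

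Second, set \(D:=\{(z,w)\in R_n^2:\operatorname{Re}z=\operatorname{Re}w\}\). The number of ordered pairs of distinct zeros \((z,w)\in D\) has expectation
\[
\int_D \rho^{(2)}(z,w)\,\mathrm dA(z)\,\mathrm dA(w).
\]
Third, this integral vanishes. The set \(D\) is contained in the hypersurface \(\{x_1=x_2\}\) of \(\mathbb R^4\), which has four-dimensional Lebesgue measure zero. The integrand is bounded by \((4\pi y_1^2)^{-1}(4\pi y_2^2)^{-1}\), which is locally integrable on \(R_n^2\) by admissibility; this makes the expectation well defined, and in any case the integral of a nonnegative function over a null set is zero. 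A nonnegative integer-valued random variable with zero expectation vanishes almost surely, so there is almost surely no such pair.

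The only point needing care is the identity between the expected number of distinct pairs and the integral of \(\rho^{(2)}\). This requires \(\mathcal Z_{\mathbb H}\) to be simple, so that the pairs counted are pairs of distinct points. Simplicity is recorded in Subsection~\ref{subsec:zero-dpp} via the nondegeneracy theorem. I do not expect a real obstacle here. One small subtlety is that the factorial-moment identity holds for measurable sets that need not be products, such as \(D\); this is standard because \(\rho^{(2)}\) is by definition the density of the second factorial moment measure on \(\mathbb H^2\).
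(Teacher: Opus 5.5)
Your argument is correct, and it takes a more direct route than the paper. The paper fixes a bounded interval \(J\) and \(\varepsilon>0\), counts ordered pairs of distinct zeros in \(S_I\) with real parts in \(J\) differing by less than \(\varepsilon\), and bounds the expected number by \(2\varepsilon|J|\lambda_I^2\). That bound uses the Hermitian inequality \(\rho^{(2)}_{\mathbb H}(z,w)\le K_{\mathbb H}(z,z)K_{\mathbb H}(w,w)\) together with admissibility; Markov's inequality and \(\varepsilon\downarrow0\) then exclude exact collisions.

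You instead apply the factorial moment formula directly to \(F=\mathbf 1_D\), where \(D\) lies in the hyperplane \(\{x_1=x_2\}\) of \(\mathbb R^4\). This uses only that the second factorial moment measure of \(\mathcal Z_{\mathbb H}\) has a density with respect to Lebesgue measure on \(\mathbb H^2\). In effect, the paper's \(\varepsilon\)-thickenings decrease to your null set \(D\), and you skip the limit.

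Your version shows that the determinantal inequality, admissibility, and the restriction to bounded windows are all unnecessary for the absence of collisions. Indeed, almost surely no two zeros anywhere in \(\mathbb H\) share a real part. Admissibility enters only to make \(\mathcal X_I\) locally finite, so that the statement makes sense. The paper's version gives a quantitative \(O(\varepsilon)\) bound on near-collisions, which its remark following the lemma emphasizes. For the qualitative simplicity statement, your argument is shorter and more general.
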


\begin{proof}
It suffices to rule out multiple projected points on each bounded interval
\(J\subset\mathbb R\). For \(\varepsilon>0\), set
\[
M_{J,\varepsilon}
:=
\sum_{\substack{z_{1},z_{2}\in\mathcal{Z}_{\mathbb{H}}\cap S_{I}\\ z_{1}\neq z_{2}}}
\mathbf{1}_{J}(\operatorname{Re}z_{1})
\mathbf{1}_{\{|\operatorname{Re}z_{2}-\operatorname{Re}z_{1}|<\varepsilon\}} .
\]
If two points of \(\mathcal Z_{\mathbb H}\cap S_I\) have the same real part in \(J\), then
\(M_{J,\varepsilon}\ge1\) for every \(\varepsilon>0\). Hence, by Markov's inequality and the
factorial moment formula,
\[
\mathbb P\bigl(\exists x\in J:\mathcal X_I(\{x\})\ge2\bigr)
\le \mathbb E M_{J,\varepsilon}=
\int_J\int_{|x'-x|<\varepsilon}\int_I\int_I
\rho_{\mathbb H}^{(2)}(x+iy,x'+iy')\,
\mathrm dy\,\mathrm dy'\,\mathrm dx'\,\mathrm dx .
\]
Since the process is determinantal with Hermitian kernel,
\[
\rho_{\mathbb H}^{(2)}(z,w)
=K_{\mathbb H}(z,z)K_{\mathbb H}(w,w)-|K_{\mathbb H}(z,w)|^2
\le K_{\mathbb H}(z,z)K_{\mathbb H}(w,w),
\]
and therefore
\[
\mathbb E M_{J,\varepsilon}
\le
\int_J\int_{|x'-x|<\varepsilon}
\left(\int_I\frac{\mathrm dy}{4\pi y^2}\right)^2
\mathrm dx'\,\mathrm dx
=2\varepsilon |J|\lambda_I^2 .
\]
Letting \(\varepsilon\downarrow0\) gives the claim on \(J\), and taking
\(J=[-n,n]\), \(n\ge1\), completes the proof.
\end{proof}

\begin{remark}
\label{rem:projection-does-not-create-collisions}
Lemma~\ref{lem:simplicity-projected} does not follow from simplicity of the planar zero process, since \(z\mapsto\operatorname{Re}z\) may identify distinct points. The proof uses the two-point intensity of the planar determinantal process and the admissibility condition \(\int_{I}y^{-2}\,\mathrm{d}y<\infty\) to show that the expected number of near-colliding projected pairs in a bounded interval is \(O(\varepsilon)\), ruling out exact projected collisions.
\end{remark}

\section{Projected correlations and second-order structure}
\label{sec:correlations-variance}

In this section we derive the correlation and cumulant structure of the projected strip process. Since \(\mathcal{Z}_{\mathbb{H}}\) is determinantal before projection, the corresponding densities of \(\mathcal{X}_{I}\) are obtained by integrating the planar determinantal formulas over the height variables. The second-order formulas yield the variance asymptotics and rule out a Hermitian translation-invariant determinantal representation on \(\mathbb{R}\).

\subsection{Projected \(k\)-point intensities}
\label{subsec:projected-intensities}

The projected intensities are obtained by integrating the planar intensities over the height variables.

\begin{proposition}
\label{prop:projected-intensities}
Let \(I\subset(0,\infty)\) be admissible and \(k\geq1\). Then \(\mathcal{X}_{I}\) admits \(k\)-point intensity functions, and for pairwise distinct \(x_{1},\ldots,x_{k}\in\mathbb{R}\),
\begin{equation}
\label{eq:projected-k-point-intensity}
\rho_{I}^{(k)}(x_{1},\ldots,x_{k})
=
\int_{I^{k}}
\det\left(
K_{\mathbb{H}}(x_{i}+iy_{i},x_{j}+iy_{j})
\right)_{i,j=1}^{k}
\,\mathrm{d}y_{1}\cdots\mathrm{d}y_{k}.
\end{equation}
\end{proposition}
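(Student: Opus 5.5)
The plan is to identify the \(k\)-th factorial moment measure of \(\mathcal X_I\) as the push-forward of the \(k\)-th factorial moment measure of \(\mathcal Z_{\mathbb H}\) restricted to \(S_I^k\), and then to disintegrate the planar density along the height variables. Let \(\pi(x+iy):=x\) and \(\Pi_k:=\pi^{\otimes k}\).

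For nonnegative Borel \(h\) on \(\mathbb R^k\), we start from the pointwise identity
\[
\sum^{\neq}_{x_1,\dots,x_k\in\mathcal X_I}h(x_1,\dots,x_k)
=\sum^{\neq}_{z_1,\dots,z_k\in\mathcal Z_{\mathbb H}\cap S_I}h(\operatorname{Re}z_1,\dots,\operatorname{Re}z_k).
\]
The sums run over distinct points counted as atoms. This identity holds almost surely because \(\mathcal X_I\) is simple (Lemma~\ref{lem:simplicity-projected}), so distinct planar points in \(S_I\) have distinct real parts a.s. Alternatively, the identity holds directly if \(\mathcal X_I\) is treated as a counting measure with multiplicity, since the projection preserves multiplicities.

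Taking expectations and applying the factorial moment formula for the determinantal process \(\mathcal Z_{\mathbb H}\) gives
\[
\mathbb E\sum^{\neq}h=\int_{S_I^k}h(\Pi_k z)\det\bigl(K_{\mathbb H}(z_i,z_j)\bigr)\,\mathrm dA(z_1)\cdots\mathrm dA(z_k).
\]
Writing \(z_j=x_j+iy_j\) and using Tonelli, which is legitimate because the determinant of a Hermitian positive semidefinite kernel matrix is nonnegative, we obtain
\[
\int_{\mathbb R^k}h(x)\,\rho_I^{(k)}(x)\,\mathrm dx,
\]
with \(\rho_I^{(k)}\) given by \eqref{eq:projected-k-point-intensity}. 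Hence \(\rho_I^{(k)}\) is a density of the factorial moment measure, that is, a \(k\)-point intensity.

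It remains to check that this is genuinely a locally integrable function, finite for a.e.\ \(x\), so that the factorial moment measure is locally finite. Hadamard's inequality gives
\[
\det\bigl(K_{\mathbb H}(z_i,z_j)\bigr)\le\prod_j K_{\mathbb H}(z_j,z_j)=\prod_j(4\pi y_j^2)^{-1},
\]
so that \(0\le\rho_I^{(k)}\le\lambda_I^k<\infty\) everywhere by admissibility. This holds in fact for all \(x\), and shows that the integral in \eqref{eq:projected-k-point-intensity} converges absolutely for each fixed \(x\). The integrand is continuous in \(x\), and the bound \(\prod_j(4\pi y_j^2)^{-1}\) is an integrable dominating function, so dominated convergence shows that \(\rho_I^{(k)}\) is continuous on \(\mathbb R^k\). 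Thus the formula defines the intensity pointwise at distinct points, not only a.e.; the restriction to pairwise distinct \(x_i\) is only the usual convention, since the diagonal is Lebesgue-null.

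The main point requiring care is the measure-theoretic justification of the push-forward identity (the role of simplicity and the Tonelli interchange). The Hadamard bound resolves the integrability issue cleanly, so no real obstacle is expected.
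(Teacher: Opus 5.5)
Your proposal is correct and follows the same route as the paper. You transfer the factorial moment formula of \(\mathcal Z_{\mathbb H}\) on \(S_I^k\) to \(\mathcal X_I\), integrate out the heights by Tonelli, and use Hadamard's bound \(\det\le\prod_j(4\pi y_j^2)^{-1}\) to get \(0\le\rho_I^{(k)}\le\lambda_I^k\). Your explicit justification of the push-forward identity (via simplicity, or by counting with multiplicity) and the continuity of \(\rho_I^{(k)}\) by dominated convergence are sound details that the paper leaves implicit.
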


\begin{proof}
Let \(F:\mathbb{R}^{k}\to[0,\infty)\) be measurable with compact support. Since \(\mathcal Z_{\mathbb H}\) is determinantal with kernel \(K_{\mathbb H}\), the factorial moment formula (Definition \ref{def:appendix-correlation-functions}) gives
\[
\begin{aligned}
&\mathbb E\sum_{x_{1},\ldots,x_{k}\in\mathcal X_I}^{\neq}
F(x_{1},\ldots,x_{k})\\
&\quad=
\int_{\mathbb R^k} F(x_{1},\ldots,x_{k})
\int_{I^k}
\det\left(
K_{\mathbb H}(x_i+iy_i,x_j+iy_j)
\right)_{i,j=1}^k
\,\mathrm dy_{1}\cdots\mathrm dy_k\,
\mathrm dx_{1}\cdots\mathrm dx_k .
\end{aligned}
\]
It remains only to note that the inner integral is finite. Indeed, for
\(z_j=x_j+iy_j\), 
\[
0\le
\det\left(K_{\mathbb H}(z_i,z_j)\right)_{i,j=1}^k
\le
\prod_{j=1}^k K_{\mathbb H}(z_j,z_j)
=
\prod_{j=1}^k \frac{1}{4\pi y_j^2},
\]
whose integral over \(I^k\) is \(\lambda_I^k<\infty\). Thus
\eqref{eq:projected-k-point-intensity} defines the \(k\)-point intensity.
\end{proof}

\begin{corollary}
\label{cor:projected-intensity-basic-properties}
Let \(I\subset(0,\infty)\) be admissible. Then \(\rho_{I}^{(1)}\equiv\lambda_I\), \(\rho_{I}^{(2)}(x_{1},x_{2})\) depends only on \(x_{2}-x_{1}\), and for every \(k\geq1\),
\(0\leq\rho_I^{(k)}(x_1,\ldots,x_k)\leq\lambda_I^k.\)
\end{corollary}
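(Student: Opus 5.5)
The corollary has three claims, and I will read each one off formula \eqref{eq:projected-k-point-intensity} of Proposition~\ref{prop:projected-intensities} together with the explicit kernel \(K_{\mathbb H}(z,w)=-1/(\pi(z-\overline w)^2)\).

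\emph{One-point intensity.} For \(k=1\) the determinant reduces to the diagonal value
\[
K_{\mathbb H}(x+iy,x+iy)=-\frac{1}{\pi(2iy)^2}=\frac{1}{4\pi y^2}.
\]
Integrating over \(y\in I\) gives \(\rho_I^{(1)}(x)=\lambda_I\), independently of \(x\). This agrees with Lemma~\ref{lem:local-finiteness-intensity}.

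\emph{Dependence of the two-point function on \(x_2-x_1\).} Write \(z_j=x_j+iy_j\). Then \(z_1-\overline{z_2}=(x_1-x_2)+i(y_1+y_2)\), so every entry of the \(2\times2\) matrix is a function of \(x_1-x_2\), \(y_1\) and \(y_2\) only. Hence the integrand in \eqref{eq:projected-k-point-intensity} for \(k=2\) depends on \((x_1,x_2)\) only through \(x_2-x_1\), and so does its integral over \(I^2\). In fact the same argument gives joint invariance of \(\rho_I^{(k)}\) under simultaneous real shifts for every \(k\). This is consistent with Lemma~\ref{lem:stationarity-projected}, but the direct kernel computation avoids any appeal to uniqueness of intensity densities.

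\emph{Two-sided bound.} In the proof of Proposition~\ref{prop:projected-intensities} we already showed
\[
0\le\det\left(K_{\mathbb H}(z_i,z_j)\right)_{i,j=1}^k\le\prod_{j=1}^k (4\pi y_j^2)^{-1}.
\]
The lower bound holds because the matrix is a positive semidefinite Gram matrix: \(K_{\mathbb H}\) is, up to a positive constant, the reproducing kernel of a Hilbert space (the Bergman space of \(\mathbb H\)). The upper bound is Hadamard's inequality. Integrating these bounds over \(I^k\) yields \(0\le\rho_I^{(k)}\le\lambda_I^k\) at pairwise distinct points.

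The only delicate point is that \eqref{eq:projected-k-point-intensity} is stated for pairwise distinct arguments. Since intensity functions are defined only up to Lebesgue-null sets, and the diagonal set is null, I will use \eqref{eq:projected-k-point-intensity} as the defining version everywhere. The right-hand side is still well defined on the diagonals, and the same bounds hold there, so no step is a genuine obstacle.
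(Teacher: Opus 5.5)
Your proposal is correct and follows the paper's own (implicit) argument: the corollary is read off directly from \eqref{eq:projected-k-point-intensity}. The ingredients are the diagonal value \(K_{\mathbb H}(z,z)=(4\pi y^2)^{-1}\), the dependence of \(K_{\mathbb H}(z_i,z_j)\) on \(x_i-x_j\) only, and the bound \(0\le\det(K_{\mathbb H}(z_i,z_j))_{i,j=1}^k\le\prod_j(4\pi y_j^2)^{-1}\) already stated in the proof of Proposition~\ref{prop:projected-intensities}. Your justification of that bound, via positive semidefiniteness of the Bergman-kernel Gram matrix and Hadamard's inequality (\(K_{\mathbb H}\) is in fact exactly the Bergman kernel of \(\mathbb H\), with constant \(1\)), fills in a step the paper leaves implicit.
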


\subsection{Projected factorial cumulant densities}
\label{subsec:projected-factorial-cumulants}

We pass from projected correlations to factorial cumulant densities. The finite partition expansion is compatible with the height integrations in Proposition~\ref{prop:projected-intensities}.

\begin{proposition}
\label{prop:projected-factorial-cumulant-density}
Let \(I\subset(0,\infty)\) be admissible and \(k\geq2\). The \(k\)-th factorial cumulant measure of \(\mathcal{X}_{I}\) is absolutely continuous with density \(u_{I}^{(k)}\), and for pairwise distinct \(x_{1},\ldots,x_{k}\in\mathbb{R}\),
\begin{equation}
\label{eq:projected-factorial-cumulant-density}
u_{I}^{(k)}(x_{1},\ldots,x_{k})
=
\int_{I^{k}}
u_{\mathbb{H}}^{(k)}(x_{1}+iy_{1},\ldots,x_{k}+iy_{k})
\,\mathrm{d}y_{1}\cdots\mathrm{d}y_{k},
\end{equation}
where \(u_{\mathbb{H}}^{(k)}\) is the \(k\)-th factorial cumulant density of \(\mathcal{Z}_{\mathbb{H}}\).
\end{proposition}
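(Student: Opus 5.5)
The plan is to express the factorial cumulant measure through factorial moment measures via the finite Möbius (partition) inversion, and then push the height integrations inside each term. Recall from Appendix~\ref{app:cumulants} that the $k$-th factorial cumulant measure $\gamma^{(k)}_{I}$ of $\mathcal X_I$ is
\[
\gamma^{(k)}_{I}=\sum_{\pi\in\Pi_k}(-1)^{|\pi|-1}(|\pi|-1)!\,\bigotimes_{B\in\pi}\alpha_I^{(|B|)},
\]
where $\Pi_k$ is the set of partitions of $\{1,\ldots,k\}$ and $\alpha_I^{(m)}$ is the $m$-th factorial moment measure. The analogous identity holds for $\mathcal Z_{\mathbb H}$ at the level of densities:
\[
u_{\mathbb H}^{(k)}=\sum_\pi(-1)^{|\pi|-1}(|\pi|-1)!\prod_{B\in\pi}\rho_{\mathbb H}^{(|B|)}(z_B).
\]
By Proposition~\ref{prop:projected-intensities}, each $\alpha_I^{(m)}$ has density $\rho_I^{(m)}$, obtained by integrating $\rho_{\mathbb H}^{(m)}$ over $I^m$. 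Hence each tensor product $\bigotimes_{B\in\pi}\alpha_I^{(|B|)}$ is absolutely continuous on $\mathbb R^k$, with density
\[
\prod_{B\in\pi}\rho_I^{(|B|)}(x_B)=\int_{I^k}\prod_{B\in\pi}\rho_{\mathbb H}^{(|B|)}(z_B)\,\mathrm dy_1\cdots\mathrm dy_k,
\qquad z_j=x_j+iy_j.
\]
The second equality is Fubini/Tonelli, since the integrand is nonnegative and the height variables of distinct blocks are disjoint. A finite signed combination of absolutely continuous measures is absolutely continuous, so $\gamma_I^{(k)}$ has a density $u_I^{(k)}$.

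To identify this density, sum the block formulas with the partition weights. Every individual term is integrable over $I^k$: by the Hadamard bound used in Proposition~\ref{prop:projected-intensities}, each term is dominated by $\prod_j (4\pi y_j^2)^{-1}$, which has integral $\lambda_I^k<\infty$. Linearity of the integral then lets us interchange the finite sum with $\int_{I^k}$, which yields \eqref{eq:projected-factorial-cumulant-density}. The resulting $u_{\mathbb H}^{(k)}$ is integrable in the heights, so the formula is well defined pointwise for every pairwise distinct $(x_1,\ldots,x_k)$.

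The only delicate point is measure-theoretic: the factorial moment measures charge only the off-diagonal set (for $\mathcal X_I$ the diagonals are null, by Lemma~\ref{lem:simplicity-projected} together with absolute continuity), and the cumulant measure is built from products of lower-order moment measures, which live on $\mathbb R^k$ rather than on the off-diagonal set. I would handle this by verifying the identity against test functions $F\ge0$ of compact support, splitting $F$ into positive and negative parts term by term. The diagonal $\{x_i=x_j\}$ has Lebesgue measure zero in $\mathbb R^k$, so prescribing the density only at pairwise distinct points suffices. The bound $0\le\rho^{(m)}_I\le\lambda_I^m$ from Corollary~\ref{cor:projected-intensity-basic-properties} guarantees local finiteness of every term, so all the manipulations involve genuine finite Radon measures on compacts.
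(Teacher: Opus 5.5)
Your proposal is correct and takes the same route as the paper's proof: substitute the projected intensities of Proposition~\ref{prop:projected-intensities} into the partition (M\"obius) formula, use disjointness of the blocks to write each product of block integrals as a single integral over $I^k$, and move the finite sum inside. You also check absolute continuity at the level of the measure $\gamma_I^{(k)}$, and you use the Hadamard domination of every term by $\prod_j(4\pi y_j^2)^{-1}$ to justify the interchange; the paper leaves both of these points implicit.
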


\begin{proof} 
Substituting \eqref{eq:projected-k-point-intensity} into the partition formula in Definition~\ref{def:appendix-factorial-cumulant-densities}, and using the disjointness of the blocks, the finite sum may be moved inside the integral: 
\[ 
\begin{aligned} u_I^{(k)}(x_1,\ldots,x_k) &= \int_{I^k} \sum_{\pi\in\mathcal P([k])} (-1)^{|\pi|-1}(|\pi|-1)! \prod_{B\in\pi} \rho_{\mathbb H}^{(|B|)} \bigl((x_j+iy_j)_{j\in B}\bigr) \,\mathrm dy_1\cdots\mathrm dy_k \\ &= \int_{I^k} u_{\mathbb H}^{(k)}(x_1+iy_1,\ldots,x_k+iy_k) \,\mathrm dy_1\cdots\mathrm dy_k . \end{aligned} 
\] 
\end{proof}

\begin{corollary}
\label{cor:projected-reduced-factorial-cumulant-density}
Let \(I\subset(0,\infty)\) be admissible and \(k\geq2\). The \(k\)-th reduced factorial cumulant density of \(\mathcal{X}_{I}\) may be represented, off the reduced diagonals, by
\begin{equation}
\label{eq:reduced-cumulant-density-definition}
c_{I}^{(k)}(t_{2},\ldots,t_{k})
:=
u_{I}^{(k)}(0,t_{2},\ldots,t_{k}).
\end{equation}
For every bounded measurable compactly supported \(\varphi:\mathbb{R}\to\mathbb{R}\), if the integral on the right-hand side is absolutely convergent, then
\begin{equation}
\label{eq:factorial-cumulant-linear-statistic-reduced}
\operatorname{cum}_{k}^{!}
\left(
\int_{\mathbb{R}}\varphi(x)\,\mathcal{X}_{I}(\mathrm{d}x)
\right)
=
\int_{\mathbb{R}^{k}}
\varphi(x_{1})\cdots\varphi(x_{k})
c_{I}^{(k)}(x_{2}-x_{1},\ldots,x_{k}-x_{1})
\,\mathrm{d}x_{1}\cdots\mathrm{d}x_{k}.
\end{equation}
\end{corollary}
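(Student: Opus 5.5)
The plan is to combine three ingredients: stationarity of \(\mathcal X_I\) (Lemma~\ref{lem:stationarity-projected}), the explicit density \(u_I^{(k)}\) from Proposition~\ref{prop:projected-factorial-cumulant-density}, and the cumulant conventions of Appendix~\ref{app:cumulants}. The statement has two parts. First, the reduced density can be taken to be the restriction \(t\mapsto u_I^{(k)}(0,t_2,\ldots,t_k)\). Second, this gives the linear-statistic formula \eqref{eq:factorial-cumulant-linear-statistic-reduced}.

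\textbf{Step 1: translation invariance of \(u_I^{(k)}\).} For pairwise distinct \(x_j\) and every \(a\in\mathbb R\), I will show
\[
u_I^{(k)}(x_1+a,\ldots,x_k+a)=u_I^{(k)}(x_1,\ldots,x_k).
\]
This can be read off pointwise from the kernel. Since \(K_{\mathbb H}(z+a,w+a)=K_{\mathbb H}(z,w)\) for real \(a\), every planar correlation function \(\rho_{\mathbb H}^{(m)}\) is invariant under simultaneous real translation. Hence so is \(u_{\mathbb H}^{(k)}\), which is a finite partition combination of these correlation functions. Integrating over the heights in \(I^k\) via \eqref{eq:projected-factorial-cumulant-density} then yields the identity for \(u_I^{(k)}\).

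Taking \(a=-x_1\) gives
\[
u_I^{(k)}(x_1,\ldots,x_k)=c_I^{(k)}(x_2-x_1,\ldots,x_k-x_1)
\]
off the diagonals. The diagonals are Lebesgue-null in \(\mathbb R^k\), and after the change of variables \((x_1,t)\) their images are null in \(\mathbb R^{k-1}\). So \(c_I^{(k)}\) is well defined almost everywhere. By the disintegration defining the reduced measure in the appendix, \(\gamma_k^{!}(A\times B)=|A|\,\gamma^{!}_{k,\mathrm{red}}(B)\), it represents the reduced factorial cumulant measure. This holds provided the factorial cumulant measure is locally of finite variation, which local boundedness of the \(\rho_I^{(m)}\) (Corollary~\ref{cor:projected-intensity-basic-properties}) guarantees on bounded sets.

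\textbf{Step 2: the linear-statistic formula.} Let \(\varphi\) be bounded, measurable and compactly supported, and set \(S=\int\varphi\,\mathrm d\mathcal X_I\). The \(k\)-th factorial cumulant of \(S\) is given in the appendix conventions by integrating \(\varphi^{\otimes k}\) against the factorial cumulant measure. That measure has density \(u_I^{(k)}\) by Proposition~\ref{prop:projected-factorial-cumulant-density}.

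This expression is justified because all factorial moments of \(S\) are finite. Indeed,
\[
\mathbb E\,\mathcal X_I(J)^{[m]}\le \lambda_I^m|J|^m
\]
by Corollary~\ref{cor:projected-intensity-basic-properties}, so the moment–cumulant partition expansion is a finite combination of absolutely convergent integrals.

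Now substitute the representation from Step 1 and change variables \(t_j=x_j-x_1\), which has Jacobian \(1\). This gives \eqref{eq:factorial-cumulant-linear-statistic-reduced}, with Fubini licensed by the assumed absolute convergence.

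\textbf{Main obstacle.} The delicate point is Step 2's justification: one must handle cancellation in the partition expansion when individual terms are integrable but no sign control is available. I would handle it term by term. Each \(\rho_I^{(|B|)}\) is bounded and \(\varphi\) is compactly supported, so every product term is absolutely integrable over the compact set \((\operatorname{supp}\varphi)^k\). Exchanging the finite sum with the integral is therefore legitimate without any cumulant decay, and the hypothesis on absolute convergence is only needed for the reduced form.
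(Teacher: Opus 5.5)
Your proposal is correct and follows the paper's route: translation invariance of the factorial cumulant density from Proposition~\ref{prop:projected-factorial-cumulant-density}, evaluation at \(x_1=0\), and the reduced disintegration (Lemma~\ref{lem:appendix-reduced-cumulant-linear-statistic}) with the change of variables \(t_j=x_j-x_1\) for the linear-statistic formula. Your one refinement is worth keeping: you derive pointwise invariance of \(u_I^{(k)}\) directly from \(K_{\mathbb H}(z+a,w+a)=K_{\mathbb H}(z,w)\), whereas stationarity of \(\mathcal X_I\) alone gives invariance only almost everywhere, and it is the pointwise version that makes evaluation on the null hyperplane \(\{x_1=0\}\) meaningful. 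Also, since \(u_I^{(k)}\) is bounded by a constant times \(\lambda_I^k\), the absolute-convergence hypothesis in the statement is automatic.
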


\begin{proof}
By Lemma~\ref{lem:stationarity-projected}, \(\mathcal{X}_{I}\) is stationary, so its factorial cumulant measures are translation invariant. Proposition~\ref{prop:projected-factorial-cumulant-density} gives the full cumulant density \(u_I^{(k)}\), hence the reduced density is represented off the reduced diagonals by \eqref{eq:reduced-cumulant-density-definition}. Formula \eqref{eq:factorial-cumulant-linear-statistic-reduced} follows from Lemma~\ref{lem:appendix-reduced-cumulant-linear-statistic}.
\end{proof}

The projected cumulant formula and the planar determinantal cycle expansion give the explicit representation used in the Brillinger-mixing estimate.

\begin{proposition}
\label{prop:projected-cycle-expansion}
Let \(I\subset(0,\infty)\) be admissible, \(k\geq2\), and let
\(\operatorname{Cyc}_{k}\) be the set of all \(k\)-cycles on \(\{1,\ldots,k\}\).
For pairwise distinct \(x_{1},\ldots,x_{k}\in\mathbb{R}\),
\begin{equation}
\label{eq:projected-cycle-expansion}
u_{I}^{(k)}(x_{1},\ldots,x_{k})
=
(-1)^{k-1}
\sum_{\sigma\in\operatorname{Cyc}_{k}}
\int_{I^{k}}
\prod_{j=1}^{k}
K_{\mathbb{H}}(x_{j}+iy_{j},x_{\sigma(j)}+iy_{\sigma(j)})
\,\mathrm{d}y_{1}\cdots\mathrm{d}y_{k}.
\end{equation}
\end{proposition}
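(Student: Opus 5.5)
The plan is to combine Proposition~\ref{prop:projected-factorial-cumulant-density}, which writes \(u_I^{(k)}\) as the height integral of the planar factorial cumulant density \(u_{\mathbb H}^{(k)}\), with the classical cycle formula for cumulant densities of a determinantal process. The only real work is the planar identity, valid for pairwise distinct \(z_1,\ldots,z_k\in\mathbb H\),
\[
u_{\mathbb H}^{(k)}(z_1,\ldots,z_k)
=(-1)^{k-1}\sum_{\sigma\in\operatorname{Cyc}_k}\prod_{j=1}^k K_{\mathbb H}(z_j,z_{\sigma(j)}).
\]
Once this holds, substituting it into \eqref{eq:projected-factorial-cumulant-density} gives \eqref{eq:projected-cycle-expansion} immediately.

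To prove the planar identity, I would expand each correlation function \(\rho_{\mathbb H}^{(|B|)}=\det(K(z_i,z_j))_{i,j\in B}\) as a sum over permutations of \(B\). Each permutation factors into disjoint cycles, and the sign of a cycle of length \(\ell\) is \((-1)^{\ell-1}\). Set \(w(C):=(-1)^{|C|-1}\prod_{j\in C}K(z_j,z_{\sigma(j)})\) for a cycle \(C\) on its support. Then \(\rho^{(|B|)}=\sum_{\tau\in\mathcal P(B)}\prod_{C\in\tau}\tilde w(C)\), where \(\tilde w(C)\) is the sum of \(w\) over all cyclic orders on the block \(C\). This says that the correlation functions are the moments obtained from the ``connected'' weights \(\tilde w\) by the moment–cumulant relation over set partitions. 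The partition formula of Definition~\ref{def:appendix-factorial-cumulant-densities} is precisely Möbius inversion on the partition lattice, so it returns the connected part. Hence \(u^{(k)}=\tilde w([k])=(-1)^{k-1}\sum_{\sigma\in\operatorname{Cyc}_k}\prod_j K(z_j,z_{\sigma(j)})\). The step needing care is this combinatorial identification: the \(\tilde w\)'s multiply over blocks, so the inversion applies. I would either cite the standard result (e.g.\ Soshnikov, or Daley--Vere-Jones) or verify it directly by the exponential formula.

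It remains to justify moving the finite sum of cycle terms inside the height integral. This requires that each cycle integral converge absolutely on \(I^k\). Using \(|K_{\mathbb H}(z,w)|=1/(\pi|z-\bar w|^2)\le 1/(\pi(y+y')^2)\le 1/(4\pi\sqrt{y y'}\,\cdot\sqrt{yy'})\), together with AM--GM \((y+y')^2\ge 4yy'\), we get
\[
\Bigl|\prod_j K(z_j,z_{\sigma(j)})\Bigr|\le\prod_j\frac{1}{4\pi y_j^2}.
\]
Here each \(y_j\) appears in exactly two factors of the cycle, giving \((4\pi)^{-k}\prod_j y_j^{-2}\). Its integral over \(I^k\) is \(\lambda_I^k<\infty\) by admissibility. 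Fubini then allows exchanging the sum over cycles with the integral, which completes the argument. I expect no real obstacle beyond stating the cycle/Möbius inversion cleanly.
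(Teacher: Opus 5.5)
Your proposal is correct and follows the paper's own proof: substitute the determinantal cycle expansion (Lemma~\ref{lem:determinantal-cycle-expansion}) into \eqref{eq:projected-factorial-cumulant-density}, then move the finite sum over $\operatorname{Cyc}_k$ outside the height integral. Your extra details are also correct and make explicit what the paper leaves implicit: the M\"obius-inversion derivation of the cycle formula, and the domination $\bigl|\prod_j K_{\mathbb H}(z_j,z_{\sigma(j)})\bigr|\le\prod_j(4\pi y_j^2)^{-1}$, whose integral over $I^k$ is $\lambda_I^k<\infty$.
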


\begin{proof}
This follows by substituting the determinantal cycle expansion of Lemma~\ref{lem:determinantal-cycle-expansion} into \eqref{eq:projected-factorial-cumulant-density} and moving the finite sum over \(\operatorname{Cyc}_{k}\) outside the height integral.
\end{proof}

\subsection{Second-order formulas and an interval-window example}
\label{subsec:second-order-formulas}

We record the second-order quantities of the projected process, starting with the reduced covariance density. Since \(\mathcal{X}_{I}\) is stationary, write \(\rho_{I}^{(2)}(r):=\rho_{I}^{(2)}(0,r)\) for \(r\neq0\). Diagonal values of factorial densities are irrelevant; the following formula defines the continuous representative used at \(r=0\).

\begin{proposition}
\label{prop:covariance-density}
Let \(I\subset(0,\infty)\) be admissible. The reduced covariance density has the
continuous representative
\begin{equation}
\label{eq:covariance-density-formula}
g_I(r)
=
-\frac{1}{\pi^2}
\int_I\int_I
\frac{1}{\bigl(r^2+(y_1+y_2)^2\bigr)^2}
\,\mathrm dy_1\,\mathrm dy_2,
\qquad r\in\mathbb R .
\end{equation}
Moreover, \(g_I\) is even and belongs to \(L^1(\mathbb R)\).
\end{proposition}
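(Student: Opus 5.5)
Start from Proposition~\ref{prop:projected-cycle-expansion} with \(k=2\). The only \(2\)-cycle on \(\{1,2\}\) is the transposition, so for \(x_1\neq x_2\)
\[
u_I^{(2)}(x_1,x_2)=-\int_I\int_I K_{\mathbb H}(z_1,z_2)K_{\mathbb H}(z_2,z_1)\,\mathrm dy_1\,\mathrm dy_2 ,
\qquad z_j=x_j+iy_j .
\]
Since \(K_{\mathbb H}\) is Hermitian, the integrand equals \(|K_{\mathbb H}(z_1,z_2)|^2=\pi^{-2}|z_1-\overline{z_2}|^{-4}\). With \(r=x_2-x_1\) we have \(z_1-\overline{z_2}=-r+i(y_1+y_2)\). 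Hence \(|z_1-\overline{z_2}|^2=r^2+(y_1+y_2)^2\). By Corollary~\ref{cor:projected-reduced-factorial-cumulant-density}, the reduced covariance density \(u_I^{(2)}(0,r)\) equals the right-hand side of \eqref{eq:covariance-density-formula} for \(r\neq0\). Evenness in \(r\) is immediate, since the integrand depends only on \(r^2\).

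Next I would check that the formula defines a finite continuous function on all of \(\mathbb R\), including \(r=0\). The integrand is bounded by \((y_1+y_2)^{-4}\le (4y_1y_2)^{-2}\), using \(y_1+y_2\ge 2\sqrt{y_1y_2}\). Integrating gives the bound
\[
|g_I(r)|\le \frac{1}{16\pi^2}\Bigl(\int_I y^{-2}\,\mathrm dy\Bigr)^2=\lambda_I^2 .
\]
This majorant does not depend on \(r\), and it is integrable over \(I^2\) by admissibility. Dominated convergence therefore gives continuity of \(g_I\) on \(\mathbb R\). Because diagonal values of factorial densities are irrelevant, this continuous function is a valid representative.

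For the \(L^1\) bound, I would use Tonelli, which is legitimate because the integrand is nonnegative. For \(s=y_1+y_2>0\),
\[
\int_{\mathbb R}\frac{\mathrm dr}{(r^2+s^2)^2}=\frac{\pi}{2s^3}.
\]
It follows that
\[
\|g_I\|_{L^1}=\frac{1}{2\pi}\int_I\int_I\frac{\mathrm dy_1\,\mathrm dy_2}{(y_1+y_2)^3}.
\]
The main point is that this double integral is finite under the sole assumption \(\int_I y^{-2}\,\mathrm dy<\infty\). Near the boundary the height integrals could blow up, so a careful weighted AM--GM estimate is needed. Taking \((y_1+y_2)^3\ge (2\sqrt{y_1y_2})^3=8(y_1y_2)^{3/2}\) gives the bound \(\frac{1}{16\pi}\bigl(\int_I y^{-3/2}\,\mathrm dy\bigr)^2\). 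This matches the \(k=2\) case of Theorem~\ref{thm:brillinger-mixing}.

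It remains to verify \(\int_I y^{-3/2}\,\mathrm dy<\infty\). The part \(I\cap(0,1]\) is bounded by \(\int_I y^{-2}\,\mathrm dy\), since \(y^{-3/2}\le y^{-2}\) for \(y\le1\). For \(I\cap(1,\infty)\) we have \(\int_1^\infty y^{-3/2}\,\mathrm dy=2\). This step, controlling the large-\(y\) tail, which admissibility alone does not obviously handle, is the one I expect to need the most care. The large-height part converges because of the extra decay, and the small-height part is dominated by admissibility. This completes the plan.
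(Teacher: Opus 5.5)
Your proof is correct and follows essentially the paper's route. You reach the determinantal two-point formula through the \(k=2\) cycle expansion, while the paper uses \(\rho_I^{(2)}-\lambda_I^2\); these are the same quantity. You then use dominated convergence for continuity at \(r=0\), and Tonelli together with \(\int_{\mathbb R}(r^2+s^2)^{-2}\,\mathrm dr=\pi/(2s^3)\), exactly as the paper does.

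The one place you work harder than necessary is the finiteness of \(\int_I\int_I(y_1+y_2)^{-3}\,\mathrm dy_1\,\mathrm dy_2\). Instead of AM--GM and \(\int_I y^{-3/2}\,\mathrm dy<\infty\), you can enlarge the inner integral to \((0,\infty)\). This gives \(\int_I(y_1+y_2)^{-3}\,\mathrm dy_2\le 1/(2y_1^2)\), hence \(\|g_I\|_{L^1}\le\lambda_I\) directly from admissibility, with no large-height issue at all.
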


\begin{proof}
For \(r\neq0\), Proposition~\ref{prop:projected-intensities} with \(k=2\) and
\(\rho_I^{(1)}\equiv\lambda_I\) give
\[
\rho_I^{(2)}(r)-\lambda_I^2
=
-\int_I\int_I |K_{\mathbb H}(iy_1,r+iy_2)|^2
\,\mathrm dy_1\,\mathrm dy_2,
\]
which is exactly \eqref{eq:covariance-density-formula}. The same formula is
finite at \(r=0\), and dominated convergence gives continuity, by admissibility.
Evenness is immediate. Finally,
\[
\int_{\mathbb R}|g_I(r)|\,\mathrm dr
=
\frac{1}{2\pi}
\int_I\int_I \frac{1}{(y_1+y_2)^3}
\,\mathrm dy_1\,\mathrm dy_2
<\infty,
\]
again by admissibility. Hence \(g_I\in L^1(\mathbb R)\).
\end{proof}

\begin{remark}
\label{rem:covariance-density-decay}
If \(|I|<\infty\), then \eqref{eq:covariance-density-formula} gives the pointwise estimate
\(|g_I(r)|\leq |I|^2/(\pi^2 r^4)\), \(r\neq0.\)
For infinite admissible windows, such as \(I=(a,\infty)\) with \(a>0\), \(g_I\) is still integrable by Proposition~\ref{prop:covariance-density}, but the pointwise decay may be slower than \(r^{-4}\). This \(L^{1}\)-integrability is the input needed for the variance formula and Gaussian limits.
\end{remark}

\begin{proposition}
\label{prop:variance-formula}
Let \(I\subset(0,\infty)\) be admissible and set
\(\sigma_I^2:=\lambda_I+\int_{\mathbb R}g_I(r)\,\mathrm dr\). Then
\begin{equation}
\label{eq:variance-asymptotic}
\lim_{L\to\infty}\frac{1}{L}\operatorname{Var}(N_I(L))
=
\sigma_I^2 .
\end{equation}
Moreover, if \(|I|>0\), then \(\sigma_I^2>0\).
\end{proposition}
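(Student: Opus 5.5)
The plan has two parts: first the variance asymptotics, then the positivity of \(\sigma_I^2\).

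\emph{Variance asymptotics.} Write the variance of \(N_I(L)=\mathcal X_I([0,L])\) using the first two factorial moment measures. By Lemma~\ref{lem:local-finiteness-intensity} and Proposition~\ref{prop:projected-intensities},
\[
\operatorname{Var}(N_I(L))=\lambda_I L+\int_0^L\!\!\int_0^L g_I(x_2-x_1)\,\mathrm dx_1\,\mathrm dx_2
=\lambda_I L+\int_{-L}^{L}(L-|r|)\,g_I(r)\,\mathrm dr .
\]
Diagonal values do not matter, so the continuous representative of Proposition~\ref{prop:covariance-density} may be used. Dividing by \(L\), the integrand \((1-|r|/L)\mathbf 1_{|r|\le L}\,g_I(r)\) is dominated by \(|g_I|\in L^1(\mathbb R)\) and converges pointwise to \(g_I(r)\). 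Dominated convergence then gives \eqref{eq:variance-asymptotic}.

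\emph{Positivity of \(\sigma_I^2\).} This is the real content. Using the computation already done in the proof of Proposition~\ref{prop:covariance-density} (\(\int_{\mathbb R}(r^2+s^2)^{-2}\,\mathrm dr=\pi/(2s^3)\)), we get
\[
\sigma_I^2=\int_I\frac{\mathrm dy}{4\pi y^2}-\frac{1}{2\pi}\int_I\int_I\frac{\mathrm dy_1\,\mathrm dy_2}{(y_1+y_2)^3}.
\]
So it suffices to show the strict inequality
\[
\int_I\int_I\frac{\mathrm dy_1\,\mathrm dy_2}{(y_1+y_2)^3}<\frac12\int_I\frac{\mathrm dy}{y^2}.
\]
Fix \(y_1\in I\) and enlarge the inner domain from \(I\) to \((0,\infty)\):
\[
\int_I\frac{\mathrm dy_2}{(y_1+y_2)^3}\le\int_0^\infty\frac{\mathrm dy_2}{(y_1+y_2)^3}=\frac{1}{2y_1^2}.
\]
Integrating over \(y_1\in I\) gives the non-strict inequality.

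The inequality is strict because the left-over set \((0,\infty)\setminus I\) has positive measure for the relevant \(y_1\). Admissibility forces \(I\) to avoid a neighbourhood of \(0\) in measure: otherwise \(\int_I y^{-2}\,\mathrm dy\) would diverge. Hence \(|(0,\infty)\setminus I|>0\), and for every \(y_1\) the omitted integral \(\int_{(0,\infty)\setminus I}(y_1+y_2)^{-3}\,\mathrm dy_2\) is strictly positive. Since \(|I|>0\), integrating this positive deficit over \(I\) gives a strict loss. This yields \(\sigma_I^2>0\).

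The main step needing care is this strictness argument, in particular checking that \((0,\infty)\setminus I\) has positive measure. This uses admissibility near \(0\): if \(I\supset(0,\varepsilon)\) up to a null set, then \(\int_I y^{-2}\,\mathrm dy=\infty\). The limit computation itself is routine. As a consistency check, for \(I=[a,b]\) the formula should reproduce the value \(\sigma_{[a,b]}^2\) stated in Example~\ref{ex:interval-window}.
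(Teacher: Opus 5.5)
Your proof is correct and follows the paper's argument essentially step by step: the variance identity \(\frac1L\operatorname{Var}(N_I(L))=\lambda_I+\int_{\mathbb R}(1-|r|/L)_+g_I(r)\,\mathrm dr\) with dominated convergence, then the explicit formula for \(\sigma_I^2\) and the comparison of the inner \(y_2\)-integral with its integral over \((0,\infty)\). You also justify that \(|(0,\infty)\setminus I|>0\): otherwise \(\int_I y^{-2}\,\mathrm dy=\int_0^\infty y^{-2}\,\mathrm dy=\infty\). The paper only asserts this point, so your argument fills it in.
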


\begin{proof}
By the variance identity for simple point processes and
Proposition~\ref{prop:covariance-density},
\[
\frac{1}{L}\operatorname{Var}(N_I(L))
=
\lambda_I+\int_{\mathbb R}(1-|r|/L)_+g_I(r)\,\mathrm dr .
\]
Since \(g_I\in L^1(\mathbb R)\), dominated convergence gives
\eqref{eq:variance-asymptotic}.

By \eqref{eq:covariance-density-formula} and Tonelli's theorem,
\[
\sigma_I^2
=
\frac{1}{4\pi}\int_I \frac{\mathrm dy}{y^2}
-
\frac{1}{2\pi}\int_I\int_I \frac{\mathrm dy_1\,\mathrm dy_2}{(y_1+y_2)^3}.
\]
For each \(y_1>0\), the inner integral over \(I\) is bounded by its integral over
\((0,\infty)\), namely \(1/(2y_1^2)\), and the inequality is strict since
\((0,\infty)\setminus I\) has positive measure. Integrating over \(y_1\in I\) and
using \(|I|>0\) yields \(\sigma_I^2>0\).
\end{proof}

\begin{proof}[Proof of Theorem \ref{thm:variance-formula}]
The intensity formula is Lemma \ref{lem:local-finiteness-intensity}. The covariance density formula and its \(L^{1}\)-integrability are Proposition \ref{prop:covariance-density}. The existence, formula, and positivity of the asymptotic variance are Proposition \ref{prop:variance-formula}.
\end{proof}

\subsection{Small gaps and a determinantal obstruction}
\label{subsec:failure-translation-invariant-dpp}

Although \(\mathcal{Z}_{\mathbb{H}}\) is determinantal before projection, the projected process does not have the small-gap behavior of a Hermitian translation-invariant locally trace-class determinantal process on \(\mathbb{R}\). We now make this precise.

\begin{proposition}
\label{prop:small-gap-limit}
Let \(I\subset(0,\infty)\) be admissible with \(|I|>0\). Then the stationary
representative \(\rho_I^{(2)}(r):=\rho_I^{(2)}(0,r)\) satisfies
\begin{equation}
\label{eq:small-gap-limit}
\lim_{r\to0}\rho_I^{(2)}(r)
=
\int_I\int_I
\left(
\frac{1}{16\pi^2y_1^2y_2^2}
-
\frac{1}{\pi^2(y_1+y_2)^4}
\right)
\,\mathrm dy_1\,\mathrm dy_2
>0 .
\end{equation}
\end{proposition}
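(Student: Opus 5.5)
Our plan is to start from Proposition~\ref{prop:projected-intensities} with \(k=2\), which gives for \(r\neq0\)
\[
\rho_I^{(2)}(r)=\int_I\int_I\Bigl(K_{\mathbb H}(iy_1,iy_1)K_{\mathbb H}(r+iy_2,r+iy_2)-|K_{\mathbb H}(iy_1,r+iy_2)|^2\Bigr)\,\mathrm dy_1\,\mathrm dy_2 .
\]
The diagonal entries are \(1/(4\pi y_j^2)\), and \(|K_{\mathbb H}(iy_1,r+iy_2)|^2=\pi^{-2}(r^2+(y_1+y_2)^2)^{-2}\). Hence
\[
\rho_I^{(2)}(r)=\lambda_I^2+g_I(r),
\]
with \(g_I\) as in Proposition~\ref{prop:covariance-density}. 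That proposition already shows that \(g_I\) has a continuous representative on \(\mathbb R\), obtained by dominated convergence with dominating function \(\pi^{-2}(y_1+y_2)^{-4}\le \pi^{-2}(4y_1y_2)^{-2}\), which is integrable over \(I^2\) by admissibility. Therefore \(\lim_{r\to0}\rho_I^{(2)}(r)=\lambda_I^2+g_I(0)\). Since both pieces are separately finite, this equals the integral in \eqref{eq:small-gap-limit}.

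The positivity is the only real content. For \(y_1,y_2>0\), AM--GM gives \((y_1+y_2)^2\ge4y_1y_2\), so \((y_1+y_2)^4\ge16y_1^2y_2^2\). Thus the integrand
\[
\frac{1}{16\pi^2y_1^2y_2^2}-\frac{1}{\pi^2(y_1+y_2)^4}
\]
is nonnegative, and it vanishes only on the diagonal \(y_1=y_2\). The diagonal has zero two-dimensional Lebesgue measure. Because \(|I|>0\), the set \(I\times I\) has positive measure, so the integrand is strictly positive on a set of positive measure in \(I\times I\). This gives the strict inequality \(>0\).

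We expect no serious obstacle. The one point needing care is that the limit is taken along \(r\neq0\), where \(\rho_I^{(2)}\) is defined off the diagonal. This is handled by working with the explicit integral formula and the continuity established in Proposition~\ref{prop:covariance-density}, rather than with any diagonal value of \(\rho_I^{(2)}\).
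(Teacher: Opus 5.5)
Your proposal is correct and follows essentially the same route as the paper: write \(\rho_I^{(2)}(r)=\lambda_I^2+g_I(r)\) via the \(k=2\) projected intensity, pass to the limit \(r\to0\) by dominated convergence (admissibility gives the integrable dominating function), and get strict positivity from \((y_1+y_2)^2\ge4y_1y_2\) with equality only on the null diagonal of \(I\times I\). Your added remark that the limit is taken along \(r\neq0\) and that both \(\lambda_I^2\) and \(g_I(0)\) are finite is a harmless clarification of the same argument.
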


\begin{proof}
Since \(\rho_I^{(2)}(r)=\lambda_I^2+g_I(r)\), Proposition~\ref{prop:covariance-density}
gives
\[
\rho_I^{(2)}(r)
=
\int_I\int_I
\left[
\frac{1}{16\pi^2y_1^2y_2^2}
-
\frac{1}{\pi^2\bigl(r^2+(y_1+y_2)^2\bigr)^2}
\right]
\,\mathrm dy_1\,\mathrm dy_2 .
\]
Dominated convergence yields the limit in \eqref{eq:small-gap-limit}. The limiting
integrand is nonnegative by \((y_1+y_2)^2\ge4y_1y_2\), and is strictly positive
off the diagonal \(y_1=y_2\). Since \(|I|>0\), \(I^2\) has positive measure while
its diagonal has measure zero; hence the limiting integral is strictly positive.
\end{proof}

We now recall the small-gap behavior forced by a Hermitian translation-invariant determinantal structure on the line.

\begin{lemma}
\label{lem:translation-invariant-dpp-small-gap}
Let \(Y\) be a stationary determinantal point process on \(\mathbb{R}\) with finite
intensity \(\lambda\), admitting a Hermitian translation-invariant locally
trace-class kernel \(K_{\mathbb{R}}\) with respect to Lebesgue measure. Then, after
modifying the kernel on a null set, \(K_{\mathbb{R}}(x,y)=\mathcal{K}(x-y)\) for
a continuous function \(\mathcal{K}:\mathbb{R}\to\mathbb{C}\) with
\(\mathcal{K}(0)=\lambda\), and
\(\rho_{Y}^{(2)}(r)=\lambda^{2}-|\mathcal{K}(r)|^{2}\) for \(r\neq0\). Consequently,
\(\lim_{r\to0}\rho_{Y}^{(2)}(r)=0\).
\end{lemma}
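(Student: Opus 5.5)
The plan is to first show that a translation-invariant Hermitian locally trace-class kernel is, up to a null set, a continuous function of the difference, and then read off the small-gap limit from the $2\times2$ determinant.

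\textbf{Step 1: reduction to a function of the difference.} By hypothesis $K_{\mathbb R}(x+a,y+a)=K_{\mathbb R}(x,y)$ for a.e.\ $(x,y)$ and each $a$. Setting $\mathcal K(s):=K_{\mathbb R}(s,0)$ would be the naive choice, but a single section is a null set. So I would argue through the integral operator instead. The operator $T$ with kernel $K_{\mathbb R}$ commutes with all translations $\tau_a$ on $L^2(\mathbb R)$. Local trace class makes $\mathbf 1_{[-n,n]}T\mathbf 1_{[-n,n]}$ trace class, and it is positive since correlation functions are nonnegative. Being Hermitian and locally trace class, the kernel can be chosen as $K_{\mathbb R}(x,y)=\sum_j\lambda_j e_j(x)\overline{e_j(y)}$ on each box.

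Translation invariance of a bounded positive operator commuting with translations yields a Fourier multiplier $\widehat{\mu}\ge0$, so the kernel is $\mathcal K(x-y)$ with $\mathcal K$ the inverse Fourier transform of a positive measure $\mu$. Local trace class forces $\mu$ to be finite, because
\[
\operatorname{tr}\bigl(\mathbf 1_{[0,1]}T\mathbf 1_{[0,1]}\bigr)=\mu(\mathbb R)\cdot 1 .
\]
By Bochner's theorem $\mathcal K$ is then continuous and positive definite, and $\mathcal K(0)=\mu(\mathbb R)/(2\pi)$ up to the normalization convention. Since $\rho_Y^{(1)}(x)=K_{\mathbb R}(x,x)=\mathcal K(0)$ a.e.\ (the diagonal is well defined for the continuous representative, and it agrees with the trace-class diagonal), the finite intensity gives $\mathcal K(0)=\lambda$.

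\textbf{Step 2: the two-point function.} For $r\neq0$, the $2\times2$ determinant together with Hermitian symmetry $\mathcal K(-r)=\overline{\mathcal K(r)}$ gives
\[
\rho_Y^{(2)}(0,r)=\mathcal K(0)^2-\mathcal K(r)\mathcal K(-r)=\lambda^2-|\mathcal K(r)|^2 .
\]

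\textbf{Step 3: the small-gap limit.} Continuity of $\mathcal K$ at $0$ gives $|\mathcal K(r)|^2\to\lambda^2$, hence $\rho_Y^{(2)}(r)\to0$.

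\textbf{Main obstacle.} The delicate point is Step 1: passing from a.e.\ translation invariance of a kernel to a continuous difference kernel. I would handle it via the spectral (multiplier) argument above, using positivity ($0\le T$, as in Macchi--Soshnikov) and local trace class to get a finite spectral measure. Alternatively, one can mollify: $\mathcal K*\phi_\epsilon$ is positive definite and bounded by $\lambda$, and one passes to the limit.
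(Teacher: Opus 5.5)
Your proposal is correct and follows essentially the same route as the paper: a Fourier multiplier for the translation-invariant operator, nonnegativity from the Hermitian DPP criterion, and integrability of the multiplier from the trace of the restriction to an interval (equivalently $\mathbb E Y([0,R])=\lambda R$). It then uses continuity of the inverse Fourier transform with $\mathcal K(0)=\lambda$, and finally the $2\times2$ determinant together with Hermitian symmetry. One point deserves to be made explicit: the boundedness of $T$ that you invoke comes from the upper bound $T\le 1$ in the Macchi--Soshnikov criterion, which the paper uses in the form $0\le q\le 1$ a.e.; this bound also shows that your measure is $\mu=q\,\mathrm d\xi$ with $q\in L^1(\mathbb R)$, not merely a finite positive measure.
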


\begin{proof}
By translation invariance and the Fourier representation of such kernels on
\(\mathbb R\), \(K_{\mathbb R}\) has multiplier \(q\); see
\cite[Section~3]{Soshnikov2000}. The Hermitian DPP existence criterion
\cite[Theorem~4.5.5]{HoughKrishnapurPeresVirag2009} gives \(0\le q\le1\) a.e.,
and comparing \(\mathbb EY([0,R])=\lambda R\) with the trace of the restricted
translation-invariant operator gives
\(\lambda=(2\pi)^{-1}\int_{\mathbb R}q(\xi)\,\mathrm d\xi\). Thus
\(q\in L^1(\mathbb R)\), and we may take
\[
\mathcal{K}(r)=(2\pi)^{-1}\int_{\mathbb{R}}e^{ir\xi}q(\xi)\,\mathrm{d}\xi .
\]
Then \(\mathcal K\) is continuous and \(\mathcal K(0)=\lambda\). For \(x\neq y\),
the determinantal two-point formula, stationarity, and Hermitian symmetry give
\(\rho_Y^{(2)}(r)=\lambda^2-|\mathcal K(r)|^2\), with \(r=x-y\). Letting
\(r\to0\) proves the claim.
\end{proof}

\begin{proposition}
\label{prop:not-translation-invariant-dpp}
Let \(I\subset(0,\infty)\) be admissible and have positive Lebesgue measure. Then \(\mathcal{X}_{I}\) admits no Hermitian translation-invariant locally trace-class determinantal representation on \(\mathbb{R}\) with respect to Lebesgue measure.
\end{proposition}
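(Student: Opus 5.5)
The plan is to argue by contradiction, comparing the small-gap limits of the two-point intensity in Lemma~\ref{lem:translation-invariant-dpp-small-gap} and Proposition~\ref{prop:small-gap-limit}. Suppose \(\mathcal X_I\) admitted a Hermitian translation-invariant locally trace-class determinantal representation on \(\mathbb R\) with respect to Lebesgue measure. By Lemma~\ref{lem:stationarity-projected} and Lemma~\ref{lem:local-finiteness-intensity}, \(\mathcal X_I\) is stationary with finite intensity \(\lambda_I\). Hence Lemma~\ref{lem:translation-invariant-dpp-small-gap} applies with \(Y=\mathcal X_I\) and \(\lambda=\lambda_I\).

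The lemma then yields a two-point intensity of the form \(\lambda_I^2-|\mathcal K(r)|^2\) for \(r\neq0\), with \(\mathcal K\) continuous and \(\mathcal K(0)=\lambda_I\). In particular, this version of \(\rho^{(2)}(r)\) tends to \(0\) as \(r\to0\).

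Next I identify this version with the one in Proposition~\ref{prop:projected-intensities}. Two-point intensities are determined only up to Lebesgue-null sets of \(\mathbb R^2\). In stationary form, this means \(\lambda_I^2-|\mathcal K(r)|^2=\rho_I^{(2)}(r)\) for almost every \(r\). Both functions are continuous on \(\mathbb R\setminus\{0\}\): the first because \(\mathcal K\) is continuous, the second by the representation \(\rho_I^{(2)}=\lambda_I^2+g_I\) together with the continuity of \(g_I\) from Proposition~\ref{prop:covariance-density}. Two continuous functions that agree almost everywhere agree everywhere on \(\mathbb R\setminus\{0\}\), so the two versions coincide off the origin. Their limits as \(r\to0\) must therefore be equal.

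This gives the contradiction. Lemma~\ref{lem:translation-invariant-dpp-small-gap} forces the limit to be \(0\), while Proposition~\ref{prop:small-gap-limit}, using \(|I|>0\), shows that \(\lim_{r\to0}\rho_I^{(2)}(r)\) is strictly positive. Hence no such determinantal representation exists.

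The only delicate step is this a.e.-to-pointwise identification. The lemma's statement "\(\lim_{r\to0}\rho_Y^{(2)}(r)=0\)" refers to the continuous kernel representative, while the paper's \(\rho_I^{(2)}\) is a specific continuous representative, so the almost-everywhere uniqueness of correlation densities has to be invoked explicitly. Continuity on both sides makes this routine.
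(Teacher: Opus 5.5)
Your proposal is correct and follows the paper's proof. You assume a representation, use Lemma~\ref{lem:translation-invariant-dpp-small-gap} to obtain a continuous version of the two-point intensity that vanishes at \(0\), and contradict Proposition~\ref{prop:small-gap-limit} via almost-everywhere uniqueness of correlation densities together with continuity. Carrying out the a.e.-to-pointwise identification explicitly on \(\mathbb R\setminus\{0\}\) is just a more careful version of the paper's remark that ``equality almost everywhere, together with continuity, forces equality everywhere.''
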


\begin{proof}
Assume that such a determinantal representation exists. Since \(\mathcal{X}_{I}\) has finite intensity \(\lambda_{I}\), Lemma~\ref{lem:translation-invariant-dpp-small-gap} applies and gives a continuous representative of the second intensity with \(\lim_{r\to0}\rho_{I}^{(2)}(r)=0\). Proposition~\ref{prop:small-gap-limit}, however, gives a continuous small-gap representative with 
\(\lim_{r\to0}\rho_I^{(2)}(r)>0.\)
These representatives describe the same second factorial moment measure, so equality almost everywhere, together with continuity, forces equality everywhere. This contradiction rules out such a representation.
\end{proof}

\section{Reduced cumulants and Brillinger mixing}
\label{sec:brillinger-mixing}

In this section we prove the all-order reduced factorial cumulant bound for the projected strip process. The proof combines Proposition~\ref{prop:projected-cycle-expansion} with a one-dimensional convolution identity for \(t\mapsto (t^{2}+\alpha^{2})^{-1}\). The resulting \(L^{1}\)-integrability gives Brillinger mixing and the linear cumulant estimates used later.

\subsection{Auxiliary integrability and convolution estimates}
\label{subsec:auxiliary-integrability-convolution}

The \(L^{1}\)-bound for the \(k\)-th reduced cumulant density will involve
\(\int_I y^{-(1+1/k)}\,\mathrm dy\). The following elementary consequence follows directly from admissibility and the decomposition
\(I=(I\cap(0,1])\cup(I\cap[1,\infty))\).

\begin{lemma}
\label{lem:admissibility-lp}
Let \(I\subset(0,\infty)\) be admissible. Then, for every \(1<p\leq2\),
\(\int_I y^{-p}\,\mathrm dy<\infty.\)
\end{lemma}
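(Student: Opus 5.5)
Split $I$ at height $1$ into $I_0:=I\cap(0,1]$ and $I_1:=I\cap[1,\infty)$, so that
\[
\int_I y^{-p}\,\mathrm dy=\int_{I_0}y^{-p}\,\mathrm dy+\int_{I_1}y^{-p}\,\mathrm dy ,
\]
and bound each piece separately. Both bounds use only the monotonicity of $p\mapsto y^{-p}$ on the two ranges of $y$; no property of the kernel is needed.

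On $I_0$ we have $0<y\le1$. Since $p\le2$, this gives $y^{-p}\le y^{-2}$, and hence
\[
\int_{I_0}y^{-p}\,\mathrm dy\le\int_{I_0}y^{-2}\,\mathrm dy\le\int_I y^{-2}\,\mathrm dy<\infty
\]
by admissibility.

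On $I_1$ we have $y\ge1$, and we use $p>1$ only through the tail of the whole half-line. We bound
\[
\int_{I_1}y^{-p}\,\mathrm dy\le\int_1^\infty y^{-p}\,\mathrm dy=\frac{1}{p-1}<\infty .
\]
Adding the two estimates gives the claim.

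The only point requiring care is seeing which hypothesis controls which end. Admissibility handles the region near the boundary $y\downarrow0$, which needs $p\le2$. The condition $p>1$ handles $y\to\infty$, where admissibility gives no control: for instance, $I=(a,\infty)$ has infinite measure and $p=1$ would fail. In the application $p=1+1/k$ with $k\ge2$, so $p\in(1,3/2]$ lies in the admitted range.
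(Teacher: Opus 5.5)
Your proof is correct and follows the paper's approach: it splits \(I\) into \(I\cap(0,1]\) and \(I\cap[1,\infty)\), bounds the first piece using \(y^{-p}\le y^{-2}\) and admissibility, and bounds the second piece by \(\int_1^\infty y^{-p}\,\mathrm dy=1/(p-1)\). Your observation that \(p>1\) is genuinely needed at infinity, for instance for \(I=(a,\infty)\), is a useful point that the paper leaves implicit.
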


The following identity is the analytic core of the cumulant estimate. It evaluates the horizontal integral obtained by following one determinantal cycle.

\begin{lemma}
\label{lem:cycle-convolution-integral}
Let \(k\ge2\), \(\alpha_1,\ldots,\alpha_k>0\), and
\(g_\alpha(t):=(t^2+\alpha^2)^{-1}\). Then
\begin{equation}
\label{eq:cycle-convolution-integral}
\int_{\mathbb R^{k-1}}
g_{\alpha_1}(t_2)
g_{\alpha_2}(t_3-t_2)\cdots
g_{\alpha_{k-1}}(t_k-t_{k-1})
g_{\alpha_k}(t_k)
\,\mathrm dt_2\cdots\mathrm dt_k
=
\frac{\pi^{k-1}}{\alpha_1\cdots\alpha_k(\alpha_1+\cdots+\alpha_k)} .
\end{equation}
\end{lemma}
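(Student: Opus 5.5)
The natural route is to recognize $g_\alpha$ as a scaled Cauchy (Poisson) kernel, $g_\alpha(t)=\frac{\pi}{\alpha}P_\alpha(t)$ with $P_\alpha(t):=\frac{\alpha}{\pi(t^2+\alpha^2)}$. The left-hand side of \eqref{eq:cycle-convolution-integral} is then an iterated convolution evaluated at the origin. Three facts drive the argument: the semigroup property $P_\alpha*P_\beta=P_{\alpha+\beta}$, the evenness of each $P_\alpha$, and the explicit value $P_\alpha(0)=1/(\pi\alpha)$. Everything is nonnegative, so Tonelli's theorem justifies any order of integration. The left-hand side is therefore a well-defined element of $[0,\infty]$, and finiteness will follow from the computation itself.

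\textbf{Step 1 (Fourier transform).} Compute, or recall, that $\widehat{P_\alpha}(\xi)=\int_{\mathbb R}e^{-it\xi}P_\alpha(t)\,\mathrm dt=e^{-\alpha|\xi|}$. This can be done by residues, or by inverting $\int e^{it\xi}e^{-\alpha|\xi|}\,\mathrm d\xi=2\alpha/(t^2+\alpha^2)$ directly. Since all the functions lie in $L^1$, the convolution theorem then gives $P_\alpha*P_\beta=P_{\alpha+\beta}$.

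\textbf{Step 2 (integrate out the chain).} Substitute $s_1=t_2$ and $s_j=t_{j+1}-t_j$ for $2\le j\le k-1$. Integrate successively in $t_2,\dots,t_{k-1}$ with $t_k=:x$ held fixed. Using evenness, this gives
\[
\int g_{\alpha_1}(t_2)g_{\alpha_2}(t_3-t_2)\cdots g_{\alpha_{k-1}}(x-t_{k-1})\,\mathrm dt_2\cdots\mathrm dt_{k-1}
=\frac{\pi^{k-1}}{\alpha_1\cdots\alpha_{k-1}}\,\bigl(P_{\alpha_1}*\cdots*P_{\alpha_{k-1}}\bigr)(x).
\]
By Step 1 and induction, the convolution equals $P_{\alpha_1+\cdots+\alpha_{k-1}}(x)$. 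For $k=2$ this step is empty.

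\textbf{Step 3 (close the cycle).} It remains to multiply by $g_{\alpha_k}(x)=\frac{\pi}{\alpha_k}P_{\alpha_k}(x)$ and integrate over $x$. Since $P_{\alpha_k}$ is even,
\[
\int P_A(x)P_{\alpha_k}(x)\,\mathrm dx=(P_A*P_{\alpha_k})(0)=P_{A+\alpha_k}(0)=\frac{1}{\pi(\alpha_1+\cdots+\alpha_k)},
\]
where $A:=\alpha_1+\cdots+\alpha_{k-1}$. Collecting constants gives
\[
\frac{\pi^{k}}{\alpha_1\cdots\alpha_k}\cdot\frac{1}{\pi(\alpha_1+\cdots+\alpha_k)},
\]
which is the right-hand side of \eqref{eq:cycle-convolution-integral}.

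The only real content is the semigroup identity in Step 1. If one prefers to avoid Fourier analysis, an alternative is to use harmonic-extension uniqueness: $P_\alpha*f$ is the Poisson extension of $f$ to height $\alpha$, so composing extensions to heights $\alpha$ and $\beta$ yields height $\alpha+\beta$. The rest is bookkeeping of the factors $\pi/\alpha_j$.
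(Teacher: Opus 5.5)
Your proposal is correct and follows essentially the same route as the paper: both rewrite the left-hand side as the $k$-fold convolution $(g_{\alpha_1}*\cdots*g_{\alpha_k})(0)$, using evenness, and then use the Fourier transform $\widehat{g_\alpha}(\xi)=\pi\alpha^{-1}e^{-\alpha|\xi|}$. The only cosmetic differences are these: you package the Fourier step as the Cauchy semigroup $P_\alpha*P_\beta=P_{\alpha+\beta}$ and read off the value $P_{\alpha_1+\cdots+\alpha_k}(0)$, whereas the paper applies Fourier inversion at $0$ directly; you also add an explicit Tonelli justification for the iterated integral.
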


\begin{proof}
With \(s_2=t_2\), \(s_3=t_3-t_2,\ldots,s_k=t_k-t_{k-1}\), the Jacobian is \(1\) and
\(t_k=s_2+\cdots+s_k\). Since \(g_\alpha\) is even, the left-hand side equals
\((g_{\alpha_1}*\cdots*g_{\alpha_k})(0)\). Using
\(\widehat{g_\alpha}(\xi)=\pi\alpha^{-1}e^{-\alpha|\xi|}\), we get
\[
\widehat{g_{\alpha_1}*\cdots*g_{\alpha_k}}(\xi)
=
\frac{\pi^k}{\alpha_1\cdots\alpha_k}
e^{-(\alpha_1+\cdots+\alpha_k)|\xi|}.
\]
Fourier inversion at \(0\) gives \eqref{eq:cycle-convolution-integral}.
\end{proof}

\subsection{The Brillinger-mixing estimate}
\label{subsec:brillinger-mixing-estimate}

We now prove the Brillinger-mixing estimate stated in Theorem~\ref{thm:brillinger-mixing}. The proof gives the quantitative all-order \(L^{1}\)-bound for the reduced factorial cumulant densities, which is the main input for the linear cumulant estimates and the Gaussian limits.

\begin{proof}[Proof of Theorem~\ref{thm:brillinger-mixing}]
Assume that \(I\subset(0,\infty)\) is admissible with \(|I|>0\), and fix
\(k\ge2\). By \eqref{eq:reduced-cumulant-density-definition} and \eqref{eq:projected-cycle-expansion},
\[
\left\|c_I^{(k)}\right\|_{L^1(\mathbb R^{k-1})}
\le
\sum_{\sigma\in\operatorname{Cyc}_k}
\int_{I^k}A_\sigma(y_1,\ldots,y_k)\,\mathrm dy_1\cdots\mathrm dy_k,
\]
where
\[
A_\sigma(y_1,\ldots,y_k)
:=
\frac{1}{\pi^k}
\int_{\mathbb R^{k-1}}
\prod_{j=1}^k
\frac{1}{
(t_j-t_{\sigma(j)})^2+(y_j+y_{\sigma(j)})^2}
\,\mathrm dt_2\cdots\mathrm dt_k .
\]

Fix \(\sigma\in\operatorname{Cyc}_k\) and list the cycle as
\(i_1=1\), \(i_\ell=\sigma^{\ell-1}(1)\), \(2\le\ell\le k\), with
\(i_{k+1}=i_1\). The change of variables
\(s_1=t_{i_1}=0\), \(s_\ell=t_{i_\ell}\), \(2\le\ell\le k\), is a permutation of
\((t_2,\ldots,t_k)\). With \(\alpha_\ell:=y_{i_\ell}+y_{i_{\ell+1}}\),
Lemma~\ref{lem:cycle-convolution-integral} gives
\[
A_\sigma(y_1,\ldots,y_k)
=
\frac{1}{\pi\,\alpha_1\cdots\alpha_k(\alpha_1+\cdots+\alpha_k)}
\le
\frac{1}{2^{k+1}k\pi}
\prod_{j=1}^k y_j^{-(1+1/k)},
\]
where the last step uses the elementary lower bounds
\(\prod_\ell\alpha_\ell\ge2^k\prod_jy_j\) and
\(\sum_\ell\alpha_\ell\ge2k(\prod_jy_j)^{1/k}\). Since
\(|\operatorname{Cyc}_k|=(k-1)!\), this yields
\[
\left\|c_I^{(k)}\right\|_{L^1(\mathbb R^{k-1})}
\le
\frac{(k-1)!}{2^{k+1}k\pi}
\left(
\int_I y^{-(1+1/k)}\,\mathrm dy
\right)^k
<\infty
\]
by Lemma~\ref{lem:admissibility-lp}. Hence all reduced factorial cumulant
measures of order at least two have finite total variation, so
\(\mathcal X_I\) is Brillinger mixing.
\end{proof}

\subsection{Linear cumulant bounds for counts and statistics}
\label{subsec:linear-cumulant-bounds}

We now use the Brillinger-mixing estimate to derive the linear-in-volume cumulant bounds needed for the Gaussian limits. We first record an elementary multilinear estimate that converts \(L^{1}\)-control of reduced cumulant densities into volume bounds. We then apply it to factorial cumulants of interval counts, convert these estimates to ordinary cumulants, and finally treat compactly supported linear statistics.

\begin{lemma}
\label{lem:multilinear-cumulant-estimate}
Let \(k\geq2\), \(c^{(k)}\in L^{1}(\mathbb{R}^{k-1})\), and let \(1\leq p_{1},\ldots,p_{k}\leq\infty\) satisfy \(\sum_{j=1}^{k}p_j^{-1}=1\). If \(f_{j}\in L^{p_{j}}(\mathbb{R})\), \(1\leq j\leq k\), then
\begin{equation}
\label{eq:multilinear-cumulant-estimate}
\left|
\int_{\mathbb{R}^{k}}
\prod_{j=1}^{k}f_{j}(x_{j})
c^{(k)}(x_{2}-x_{1},\ldots,x_{k}-x_{1})
\,\mathrm{d}x_{1}\cdots\mathrm{d}x_{k}
\right|
\leq
\left\|c^{(k)}\right\|_{L^{1}(\mathbb{R}^{k-1})}
\prod_{j=1}^{k}\left\|f_{j}\right\|_{L^{p_{j}}(\mathbb{R})}.
\end{equation}
\end{lemma}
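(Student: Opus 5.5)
The plan is to reduce the estimate to the Brascamp--Lieb / multilinear Hölder situation by a change of variables. Substitute \(x_1=x\) and \(x_j=x+t_j\) for \(2\le j\le k\), which has unit Jacobian. Then the left-hand side of \eqref{eq:multilinear-cumulant-estimate} equals
\[
\int_{\mathbb R^{k-1}} c^{(k)}(t_2,\ldots,t_k)
\left(\int_{\mathbb R} f_1(x)\prod_{j=2}^k f_j(x+t_j)\,\mathrm dx\right)
\mathrm dt_2\cdots\mathrm dt_k .
\]
Fubini is justified by first running the same computation with \(|c^{(k)}|\) and \(|f_j|\) in place of \(c^{(k)}\) and \(f_j\). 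The bound obtained below shows the resulting nonnegative integral is finite, so the signed version is absolutely convergent.

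Next, for fixed \((t_2,\ldots,t_k)\), bound the inner integral by the generalized Hölder inequality with exponents \(p_1,\ldots,p_k\), using \(\sum_j p_j^{-1}=1\):
\[
\int_{\mathbb R}|f_1(x)|\prod_{j=2}^k|f_j(x+t_j)|\,\mathrm dx
\le \|f_1\|_{L^{p_1}}\prod_{j=2}^k\|f_j(\cdot+t_j)\|_{L^{p_j}}
=\prod_{j=1}^k\|f_j\|_{L^{p_j}(\mathbb R)}.
\]
The last equality holds because Lebesgue measure is translation invariant. Exponents equal to \(\infty\) are handled by the usual convention: the corresponding factor is simply bounded by its sup norm.

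This bound is uniform in \(t\). Integrating it against \(|c^{(k)}|\) over \(\mathbb R^{k-1}\) therefore yields exactly \(\|c^{(k)}\|_{L^1}\prod_j\|f_j\|_{L^{p_j}}\), which gives the claim.

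The only point needing care is measurability and the Fubini/Tonelli justification on \(\mathbb R^k\). Measurability holds because \((x,t)\mapsto f_j(x+t_j)\) is a composition with a linear map. Tonelli applies to the absolute values, and this is what makes the interchange legitimate. I expect no genuine obstacle beyond this bookkeeping.
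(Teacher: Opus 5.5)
Your proposal is correct and follows essentially the same route as the paper: the substitution \(t_j=x_j-x_1\), Tonelli applied to absolute values, then the generalized H\"older inequality with translation invariance to bound the inner integral uniformly in \(t\), and finally integration against \(|c^{(k)}|\). The mention of Brascamp--Lieb is unnecessary, since plain multilinear H\"older suffices, which is what you actually use.
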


\begin{proof}
Let \(J\) be the left-hand side of \eqref{eq:multilinear-cumulant-estimate} with absolute values inside the integral. With \(t_j=x_j-x_1\), \(2\leq j\leq k\), Tonelli's theorem gives
\[
J
=
\int_{\mathbb{R}^{k-1}}
|c^{(k)}(t_{2},\ldots,t_{k})|
\int_{\mathbb{R}}
|f_{1}(x)|
\prod_{j=2}^{k}|f_{j}(x+t_{j})|
\,\mathrm{d}x\,
\mathrm{d}t_{2}\cdots\mathrm{d}t_{k}.
\]
For fixed \(t_{2},\ldots,t_{k}\), H\"older's inequality and translation invariance of Lebesgue measure bound the inner integral by \(\prod_{j=1}^{k}\|f_j\|_{L^{p_j}(\mathbb{R})}\). This proves \eqref{eq:multilinear-cumulant-estimate}. 
\end{proof}

\begin{corollary}
\label{cor:linear-factorial-cumulant-bounds}
Assume that \(I\subset(0,\infty)\) is admissible with \(|I|>0\). Then, for every
\(k\ge2\) and \(L\ge1\),
\begin{equation}
\label{eq:linear-factorial-cumulant-bound}
\left|\operatorname{cum}_{k}^{!}(N_I(L))\right|
\le
L\left\|c_I^{(k)}\right\|_{L^1(\mathbb R^{k-1})}.
\end{equation}
\end{corollary}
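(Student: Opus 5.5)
The plan is to write \(N_I(L)=\int_{\mathbb R}\mathbf 1_{[0,L]}(x)\,\mathcal X_I(\mathrm dx)\) and apply the reduced linear-statistic formula \eqref{eq:factorial-cumulant-linear-statistic-reduced} with \(\varphi=\mathbf 1_{[0,L]}\). Then Lemma~\ref{lem:multilinear-cumulant-estimate} bounds the resulting integral. The test function \(\mathbf 1_{[0,L]}\) is bounded, measurable and compactly supported, so Corollary~\ref{cor:projected-reduced-factorial-cumulant-density} applies once the right-hand side is known to be absolutely convergent.

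First, I would check absolute convergence. By Theorem~\ref{thm:brillinger-mixing}, \(c_I^{(k)}\in L^1(\mathbb R^{k-1})\). Lemma~\ref{lem:multilinear-cumulant-estimate}, applied with the functions \(|f_j|\), controls the integral with absolute values inside; its proof actually bounds exactly this quantity \(J\). So the integral
\[
\int_{\mathbb R^k}\prod_{j=1}^k\mathbf 1_{[0,L]}(x_j)\,\bigl|c_I^{(k)}(x_2-x_1,\ldots,x_k-x_1)\bigr|\,\mathrm dx_1\cdots\mathrm dx_k
\]
is finite, and \eqref{eq:factorial-cumulant-linear-statistic-reduced} expresses \(\operatorname{cum}_k^!(N_I(L))\) as the corresponding signed integral.

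Second, I would choose the exponents. Take \(p_1=1\) and \(p_2=\cdots=p_k=\infty\), so that \(\sum_j p_j^{-1}=1\). Then \(\|f_1\|_{L^1}=\|\mathbf 1_{[0,L]}\|_{L^1}=L\) and \(\|f_j\|_{L^\infty}=1\) for \(j\ge2\). Lemma~\ref{lem:multilinear-cumulant-estimate} gives \(|\operatorname{cum}_k^!(N_I(L))|\le L\,\|c_I^{(k)}\|_{L^1(\mathbb R^{k-1})}\), which is \eqref{eq:linear-factorial-cumulant-bound}. Nothing in this argument uses \(L\ge1\); that restriction is only for later convenience.

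The only step needing care is justifying that the factorial cumulant of the count equals the reduced-density integral. This is the absolute-convergence hypothesis in Corollary~\ref{cor:projected-reduced-factorial-cumulant-density}. It also relies on the diagonal sets, where the density representative is unspecified, being Lebesgue-null in \(\mathbb R^k\). I would also note that all factorial moments of \(N_I(L)\) are finite, so the factorial cumulants are well defined. This follows from Corollary~\ref{cor:projected-intensity-basic-properties}: \(\rho_I^{(k)}\le\lambda_I^k\), so the \(k\)-th factorial moment is at most \((\lambda_I L)^k\).
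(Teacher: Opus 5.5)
Your proposal is correct and follows the paper's proof essentially verbatim. Like the paper, you write \(N_I(L)\) as the linear statistic of \(\mathbf 1_{[0,L]}\), use Theorem~\ref{thm:brillinger-mixing} to justify the absolutely convergent reduced-density representation of Corollary~\ref{cor:projected-reduced-factorial-cumulant-density}, and apply Lemma~\ref{lem:multilinear-cumulant-estimate} with \(p_1=1\), \(p_2=\cdots=p_k=\infty\). Your side remarks are accurate points the paper leaves implicit: \(L\ge1\) is not needed, and the factorial moments are finite because \(\rho_I^{(k)}\le\lambda_I^k\).
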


\begin{proof}
Let \(\varphi_L:=\mathbf 1_{[0,L]}\). Then
\(N_I(L)=\int_{\mathbb R}\varphi_L\,\mathrm d\mathcal X_I\). By
Corollary~\ref{cor:projected-reduced-factorial-cumulant-density} and
Theorem~\ref{thm:brillinger-mixing}, the reduced-cumulant representation is
absolutely convergent. Applying Lemma~\ref{lem:multilinear-cumulant-estimate}
with \(p_1=1\), \(p_2=\cdots=p_k=\infty\), and
\(f_1=\cdots=f_k=\varphi_L\), gives
\[
\left|\operatorname{cum}_{k}^{!}(N_I(L))\right|
\le
\|\varphi_L\|_{L^1}\|\varphi_L\|_{L^\infty}^{k-1}
\left\|c_I^{(k)}\right\|_{L^1(\mathbb R^{k-1})}
=
L\left\|c_I^{(k)}\right\|_{L^1(\mathbb R^{k-1})}.
\]
\end{proof}

\begin{corollary}
\label{cor:linear-ordinary-cumulant-bounds}
Assume that \(I\subset(0,\infty)\) is admissible with \(|I|>0\). For every
\(q\ge2\), there exists \(C'_{I,q}<\infty\) such that
\begin{equation}
\label{eq:linear-ordinary-cumulant-bound}
\left|\operatorname{cum}_{q}(N_I(L))\right|
\le
C'_{I,q}L,
\qquad L\ge1.
\end{equation}
\end{corollary}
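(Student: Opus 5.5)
We pass from the factorial cumulant bounds of Corollary~\ref{cor:linear-factorial-cumulant-bounds} to ordinary cumulants by the standard Stirling-number relation between factorial and ordinary cumulants of an integer-valued random variable. Write \(X:=N_I(L)\). Since \(X\) has finite moments of all orders (by Corollary~\ref{cor:projected-intensity-basic-properties}, \(\mathbb E[(X)_k]\le(\lambda_I L)^k\)), the ordinary and factorial cumulant generating functions are related through the substitution \(e^{s}-1\). That is, \(\log\mathbb E[(1+z)^X]=\sum_{k\ge1}\operatorname{cum}_k^!(X)z^k/k!\) as formal power series, and putting \(z=e^s-1\) gives
\[
\operatorname{cum}_q(X)=\sum_{k=1}^{q} S(q,k)\,\operatorname{cum}_k^!(X),
\]
where \(S(q,k)\) are Stirling numbers of the second kind. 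I would take this identity from Appendix~\ref{app:cumulants}, or verify it at the level of formal power series, which is legitimate because all moments are finite.

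Next I would bound each term. The \(k=1\) term is \(\operatorname{cum}_1^!(X)=\mathbb E X=\lambda_I L\) by Lemma~\ref{lem:local-finiteness-intensity}. For \(2\le k\le q\), Corollary~\ref{cor:linear-factorial-cumulant-bounds} together with Theorem~\ref{thm:brillinger-mixing} gives \(|\operatorname{cum}_k^!(X)|\le L\|c_I^{(k)}\|_{L^1}\). Adding these with \(S(q,1)=1\) yields the claim with
\[
C'_{I,q}:=\lambda_I+\sum_{k=2}^{q}S(q,k)\,\left\|c_I^{(k)}\right\|_{L^1(\mathbb R^{k-1})}<\infty .
\]
The constant is finite by the explicit bound in Theorem~\ref{thm:brillinger-mixing} and Lemma~\ref{lem:admissibility-lp}.

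The main obstacle is justifying the combinatorial identity rigorously rather than only formally. There are two checks. First, the factorial cumulants defined through the partition formula for factorial moment measures, evaluated on \(\mathbf 1_{[0,L]}^{\otimes k}\), must coincide with the coefficients of \(\log\mathbb E[(1+z)^X]\). Second, the formal substitution must be valid. For the first point, note that \(\mathbb E[(X)_k]=\int_{[0,L]^k}\rho_I^{(k)}\) by Proposition~\ref{prop:projected-intensities}, so the factorial moments are the coefficients of \(\mathbb E[(1+z)^X]\). The moment–cumulant partition formula is the same as the one used to define \(u_I^{(k)}\), so the integrated cumulant densities are exactly the formal log-coefficients. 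For the second point, the identity is purely algebraic between finitely many moments of order at most \(q\), so no convergence of generating functions is needed.
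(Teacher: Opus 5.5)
Your proof is correct and is essentially the paper's argument. The paper applies Lemma~\ref{lem:ordinary-cumulants-linear-statistics} with \(\varphi_L=\mathbf 1_{[0,L]}\) and uses \(\varphi_L^{|B|}=\varphi_L\), so each partition of \([q]\) into \(m\) blocks contributes \(\operatorname{cum}_m^!(N_I(L))\); grouping by \(m\) gives exactly your identity \(\operatorname{cum}_q=\sum_k S(q,k)\operatorname{cum}_k^!\), with the same constant \(C'_{I,q}\), and you merely justify it through the formal substitution \(z=e^s-1\) instead of citing the appendix lemma.
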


\begin{proof}
Let \(\varphi_L:=\mathbf 1_{[0,L]}\). By
Lemma~\ref{lem:ordinary-cumulants-linear-statistics}, the ordinary cumulant is a
finite sum over partitions of \([q]\). The one-block partition contributes
\(\lambda_I L\). For a partition with \(m\ge2\) blocks, since
\(\varphi_L^{|B|}=\varphi_L\), the corresponding term is
\(\operatorname{cum}_{m}^{!}(N_I(L))\), whose absolute value is bounded by
\(L\|c_I^{(m)}\|_{L^1(\mathbb R^{m-1})}\) by
Corollary~\ref{cor:linear-factorial-cumulant-bounds}. Summing over the finitely
many partitions gives \eqref{eq:linear-ordinary-cumulant-bound}.
\end{proof}

For the white-noise limit, we also need the preceding estimates for compactly
supported test functions with macroscopically growing support.

\begin{corollary}
\label{cor:linear-statistic-cumulant-bounds}
Assume that \(I\subset(0,\infty)\) is admissible with \(|I|>0\). For every
\(q\ge2\), there exists \(C_{I,q}<\infty\) such that, for every bounded
measurable compactly supported \(\varphi:\mathbb R\to\mathbb R\),
\begin{equation}
\label{eq:compact-linear-statistic-cumulant-bound}
\left|
\operatorname{cum}_{q}
\left(
\int_{\mathbb R}\varphi(x)\,\mathcal X_I(\mathrm dx)
\right)
\right|
\le
C_{I,q}|\operatorname{supp}\varphi|
\max\left\{1,\|\varphi\|_{L^\infty(\mathbb R)}^q\right\}.
\end{equation}
\end{corollary}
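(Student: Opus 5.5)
The plan is to follow the proof of Corollary~\ref{cor:linear-ordinary-cumulant-bounds}, with \(\mathbf 1_{[0,L]}\) replaced by a general bounded compactly supported \(\varphi\). Write \(\Phi:=\int_{\mathbb R}\varphi\,\mathrm d\mathcal X_I\), \(S:=\operatorname{supp}\varphi\), \(M:=\|\varphi\|_{L^\infty(\mathbb R)}\) and \(s:=|S|\); if \(s=0\) then \(\Phi=0\) almost surely and there is nothing to prove. Lemma~\ref{lem:ordinary-cumulants-linear-statistics} expresses \(\operatorname{cum}_q(\Phi)\) as a finite sum over partitions \(\pi\) of \([q]\). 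The term for \(\pi\) is a factorial cumulant of order \(m=|\pi|\) in the functions \(\varphi^{|B|}\), \(B\in\pi\), namely
\[
\int_{\mathbb R^m}\prod_{B\in\pi}\varphi^{|B|}(x_B)\,c_I^{(m)}(\ldots)\,\mathrm dx
\]
for \(m\ge2\), and \(\lambda_I\int_{\mathbb R}\varphi^{q}\) for the one-block partition. In every term each \(\varphi^{|B|}\) is supported in \(S\) and bounded by \(M^{|B|}\), and the exponents add up to \(q\).

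\emph{One-block term.} This term is bounded directly by \(\lambda_I s M^q\).

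\emph{Terms with \(m\ge2\) blocks.} By Theorem~\ref{thm:brillinger-mixing}, \(c_I^{(m)}\in L^1(\mathbb R^{m-1})\), so the reduced-cumulant representation \eqref{eq:factorial-cumulant-linear-statistic-reduced} converges absolutely. Apply Lemma~\ref{lem:multilinear-cumulant-estimate} with exponent \(p=1\) on the first block function and \(p=\infty\) on the others. The first factor gives \(\|\varphi^{|B_1|}\|_{L^1}\le sM^{|B_1|}\), and the remaining factors give \(\prod_{B\ne B_1}M^{|B|}\). The term is therefore at most \(\|c_I^{(m)}\|_{L^1}\,s\,M^q\).

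\emph{Summation.} Summing over the finitely many partitions of \([q]\), with the combinatorial coefficients from Lemma~\ref{lem:ordinary-cumulants-linear-statistics}, gives \(|\operatorname{cum}_q(\Phi)|\le C_{I,q}\,s\,M^q\le C_{I,q}\,s\max\{1,M^q\}\). Here \(C_{I,q}\) is \(\lambda_I+\sum_{m=2}^q\|c_I^{(m)}\|_{L^1}\) multiplied by a combinatorial constant depending only on \(q\).

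The only step needing care is to confirm that Lemma~\ref{lem:ordinary-cumulants-linear-statistics} gives exactly the block-power structure \(\varphi^{|B|}\) in the factorial cumulants, rather than some other combination. This must be checked against the appendix statement. The \(\max\{1,\cdot\}\) in the bound is then harmless.
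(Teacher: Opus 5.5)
Your proposal is correct and follows the paper's proof essentially step for step. You expand \(\operatorname{cum}_q\) via Lemma~\ref{lem:ordinary-cumulants-linear-statistics}, which does give exactly the block powers \(\varphi^{|B|}\), with coefficient one for each partition, so your combinatorial constant is just the number of partitions of \([q]\); you bound the one-block term by \(\lambda_I|\operatorname{supp}\varphi|\,\|\varphi\|_\infty^q\); and you bound each term with \(m\ge2\) blocks by Lemma~\ref{lem:multilinear-cumulant-estimate} with \(p_1=1\), \(p_2=\cdots=p_m=\infty\), together with Theorem~\ref{thm:brillinger-mixing}. Like the paper, you leave implicit that the reduced representation applies to the product of distinct functions \(\prod_\ell\varphi^{|B_\ell|}(x_\ell)\); this follows directly from Definition~\ref{def:appendix-reduced-factorial-cumulant-measures}.
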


\begin{proof}
By Lemma~\ref{lem:ordinary-cumulants-linear-statistics}, the ordinary cumulant
is a finite sum over partitions of \([q]\). The one-block term is bounded by
\(\lambda_I|\operatorname{supp}\varphi|\|\varphi\|_\infty^q\). For a partition
with \(m\ge2\) blocks, the corresponding term can be written using the reduced
density \(c_I^{(m)}\). Applying Lemma~\ref{lem:multilinear-cumulant-estimate}
with \(p_1=1\) and \(p_2=\cdots=p_m=\infty\) gives the bound
\[
|\operatorname{supp}\varphi|
\max\left\{1,\|\varphi\|_\infty^q\right\}
\|c_I^{(m)}\|_{L^1(\mathbb R^{m-1})}.
\]
Summing over partitions and using Theorem~\ref{thm:brillinger-mixing} proves
\eqref{eq:compact-linear-statistic-cumulant-bound}.
\end{proof}

\section{Gaussian limits}
\label{sec:gaussian-limits}

In this section we prove the macroscopic white-noise limit and the FCLT for
\(B_L(t):=(N_I(Lt)-\lambda_I Lt)/\sqrt L\), \(0\le t\le1\). The proof uses the
cumulant method: the linear-in-volume bounds from
Section~\ref{sec:brillinger-mixing} make all normalized cumulants of order at
least three vanish, while the second cumulant is identified through \(g_I\).

Throughout this section, \(I\subset(0,\infty)\) is admissible with \(|I|>0\).
Recall that
\(\lambda_I=\int_I(4\pi y^2)^{-1}\,\mathrm dy\) and
\(\sigma_I^2=\lambda_I+\int_{\mathbb R}g_I(r)\,\mathrm dr>0\).

\subsection{Variance of compactly supported linear statistics}
\label{subsec:variance-linear-statistics}

We first record the second-moment formula for compactly supported linear
statistics.

\begin{lemma}
\label{lem:variance-linear-statistics}
Let \(\varphi:\mathbb R\to\mathbb R\) be bounded, measurable, and compactly
supported, and set \(S(\varphi):=\int_{\mathbb R}\varphi\,\mathrm d\mathcal X_I\).
Then \(S(\varphi)\) has finite moments of all orders, and
\begin{equation}
\label{eq:variance-linear-statistic}
\operatorname{Var}(S(\varphi))
=
\lambda_I\int_{\mathbb R}\varphi(x)^2\,\mathrm dx
+
\int_{\mathbb R}\int_{\mathbb R}
\varphi(x)\varphi(y)g_I(y-x)
\,\mathrm dx\,\mathrm dy .
\end{equation}
\end{lemma}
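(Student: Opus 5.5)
The argument has two parts. First, finiteness of all moments follows from finiteness of all factorial moments. Second, the variance identity comes from the first two factorial moment formulas together with the covariance density of Proposition~\ref{prop:covariance-density}.

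\emph{Finite moments.} Let \(\operatorname{supp}\varphi\subset J\) with \(J\) a bounded interval. Then \(|S(\varphi)|\le\|\varphi\|_\infty\,\mathcal X_I(J)\), so it suffices to show that \(\mathcal X_I(J)\) has finite moments of every order. For each \(k\ge1\), the factorial moment formula and Corollary~\ref{cor:projected-intensity-basic-properties} give
\[
\mathbb E\bigl[\mathcal X_I(J)(\mathcal X_I(J)-1)\cdots(\mathcal X_I(J)-k+1)\bigr]=\int_{J^k}\rho_I^{(k)}\le(\lambda_I|J|)^k<\infty .
\]
Ordinary moments are finite linear combinations of factorial moments, via Stirling numbers of the second kind. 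Hence all moments of \(\mathcal X_I(J)\) are finite, and therefore so are those of \(S(\varphi)\).

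\emph{Variance identity.} Since \(\mathcal X_I\) is simple (Lemma~\ref{lem:simplicity-projected}), we split
\[
S(\varphi)^2=\sum_{x\in\mathcal X_I}\varphi(x)^2+\sum^{\neq}_{x,x'\in\mathcal X_I}\varphi(x)\varphi(x').
\]
The first sum has expectation \(\lambda_I\int\varphi^2\) by Lemma~\ref{lem:local-finiteness-intensity}. The second sum has expectation \(\int\!\int\varphi(x)\varphi(y)\rho_I^{(2)}(x,y)\,\mathrm dx\,\mathrm dy\), by Proposition~\ref{prop:projected-intensities} with \(k=2\). Next we subtract \((\mathbb ES(\varphi))^2=\lambda_I^2\bigl(\int\varphi\bigr)^2\). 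By Proposition~\ref{prop:covariance-density}, \(\rho_I^{(2)}(x,y)-\lambda_I^2=g_I(y-x)\) for \(x\ne y\), and the diagonal is Lebesgue-null. This yields \eqref{eq:variance-linear-statistic}.

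All integrals involved are absolutely convergent. The functions \(\varphi\) and \(\rho_I^{(2)}\) are bounded, and \(J^2\) has finite measure. Moreover, \(g_I\in L^1\), so the \(g_I\) term is finite by Lemma~\ref{lem:multilinear-cumulant-estimate} with \(k=2\), \(p_1=1\), \(p_2=\infty\). This justifies splitting \(\rho_I^{(2)}=\lambda_I^2+g_I\) inside the integral.

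The only delicate point is bookkeeping rather than analysis. One must use simplicity of \(\mathcal X_I\) to separate the diagonal term from the off-diagonal term, and note that off-diagonal values of \(g_I\) suffice because the diagonal has measure zero. Alternatively, the identity is exactly the case \(q=2\) of Lemma~\ref{lem:ordinary-cumulants-linear-statistics}. There, the one-block partition gives \(\lambda_I\int\varphi^2\), and the two-block partition gives the reduced factorial cumulant \(c_I^{(2)}=g_I\), which is absolutely convergent by Theorem~\ref{thm:brillinger-mixing}.
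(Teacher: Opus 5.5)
Your proposal is correct and follows essentially the same route as the paper. The paper bounds the factorial moments of \(\mathcal X_I(K)\) by \((\lambda_I|K|)^m\) via Corollary~\ref{cor:projected-intensity-basic-properties}, splits \(S(\varphi)^2\) into diagonal and off-diagonal sums using simplicity and the factorial moment formula, and subtracts \((\mathbb E S(\varphi))^2\) using \(\rho_I^{(2)}(x,y)-\lambda_I^2=g_I(y-x)\). Your extra remarks (the Stirling-number conversion, and the alternative via the \(q=2\) case of Lemma~\ref{lem:ordinary-cumulants-linear-statistics}) are valid refinements, not a different argument.
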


\begin{proof}
Let \(K:=\operatorname{supp}\varphi\). For \(m\ge1\),
Corollary~\ref{cor:projected-intensity-basic-properties} gives
\[
\begin{aligned}
\mathbb E\bigl[
\mathcal X_I(K)(\mathcal X_I(K)-1)\cdots(\mathcal X_I(K)-m+1)
\bigr]
&=
\int_{K^m}\rho_I^{(m)}(x_1,\ldots,x_m)\,\mathrm dx_1\cdots\mathrm dx_m\\
&\le
\lambda_I^m|K|^m<\infty .
\end{aligned}
\]
Thus \(\mathcal X_I(K)\), and hence \(S(\varphi)\), has finite moments of all
orders. By simplicity and the factorial moment formula,
\[
\mathbb E[S(\varphi)^2]
=
\lambda_I\int_{\mathbb R}\varphi(x)^2\,\mathrm dx
+
\int_{\mathbb R}\int_{\mathbb R}
\varphi(x)\varphi(y)\rho_I^{(2)}(x,y)\,\mathrm dx\,\mathrm dy .
\]
Subtracting \(\mathbb E[S(\varphi)]^2\) and using
\(\rho_I^{(2)}(x,y)-\lambda_I^2=g_I(y-x)\) gives
\eqref{eq:variance-linear-statistic}. The second integral is absolutely
convergent since \(\varphi\) is bounded and compactly supported and
\(g_I\in L^1(\mathbb R)\).
\end{proof}

\subsection{White-noise finite-dimensional limit}
\label{subsec:white-noise-limit}

The next lemma identifies the macroscopic covariance contribution from an
\(L^1\) kernel.

\begin{lemma}
\label{lem:macroscopic-covariance-limit}
Let \(g\in L^1(\mathbb R)\), and let \(f,h\in L^\infty(\mathbb R)\) be compactly
supported. Set \(f_L(x):=f(x/L)\) and \(h_L(x):=h(x/L)\). Then
\begin{equation}
\label{eq:macroscopic-covariance-limit}
\lim_{L\to\infty}
\frac{1}{L}
\int_{\mathbb R}\int_{\mathbb R}
f_L(x)h_L(y)g(y-x)\,\mathrm dx\,\mathrm dy
=
\left(\int_{\mathbb R}g(r)\,\mathrm dr\right)
\left(\int_{\mathbb R}f(u)h(u)\,\mathrm du\right).
\end{equation}
\end{lemma}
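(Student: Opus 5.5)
The plan is to change variables so that the $L$-dependence moves entirely into the kernel. Substituting $x=Lu$ and $y=Lu+r$ (Jacobian $L$) turns the left-hand side of \eqref{eq:macroscopic-covariance-limit} into
\[
\frac{1}{L}\int_{\mathbb R}\int_{\mathbb R} f_L(x)h_L(y)g(y-x)\,\mathrm dx\,\mathrm dy
=
\int_{\mathbb R} g(r)\,\Phi_L(r)\,\mathrm dr,
\qquad
\Phi_L(r):=\int_{\mathbb R} f(u)\,h\!\left(u+\tfrac{r}{L}\right)\mathrm du .
\]
Since $f$ and $h$ are bounded with compact support, $f\in L^1$ and $h\in L^\infty$. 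Hence $|\Phi_L(r)|\le \|f\|_{L^1}\|h\|_{L^\infty}$ uniformly in $L$ and $r$, and the integrand $|g(r)\Phi_L(r)|$ is dominated by the integrable function $\|f\|_{L^1}\|h\|_{L^\infty}|g(r)|$. All uses of Fubini/Tonelli are justified by this same bound applied to $|f|,|h|,|g|$.

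Next I will show that $\Phi_L(r)\to\int_{\mathbb R} f(u)h(u)\,\mathrm du$ for each fixed $r$. This is the only non-trivial step, because $h$ is merely $L^\infty$ and not continuous. I will use continuity of translation in $L^1$: $h$ is bounded and compactly supported, so $h\in L^1$, and therefore $\|h(\cdot+s)-h\|_{L^1}\to0$ as $s\to0$. Then
\[
\left|\Phi_L(r)-\int f h\right|\le \|f\|_{L^\infty}\,\|h(\cdot+r/L)-h\|_{L^1}\to0
\qquad\text{as } L\to\infty .
\]

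Dominated convergence in $r$ then gives the limit $\bigl(\int g\bigr)\bigl(\int fh\bigr)$, which proves \eqref{eq:macroscopic-covariance-limit}. I expect no real obstacle here beyond the pointwise convergence of $\Phi_L$, and that is handled by the $L^1$ translation continuity argument above.
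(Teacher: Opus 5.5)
Your proposal is correct and follows the paper's own proof. You use the same substitution to get $\int g(r)\Phi_L(r)\,\mathrm dr$, the same $L^1$ translation-continuity argument for pointwise convergence of $\Phi_L$, and the same uniform bound feeding dominated convergence; your version simply spells out the estimates more explicitly.
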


\begin{proof}
The normalized integral equals, after the changes of variables \(r=y-x\) and
\(u=x/L\),
\[
\int_{\mathbb R}g(r)A_L(r)\,\mathrm dr,
\qquad
A_L(r):=\int_{\mathbb R}f(u)h(u+r/L)\,\mathrm du .
\]
For fixed \(r\), translation continuity in \(L^1\) gives
\(A_L(r)\to\int_{\mathbb R}f(u)h(u)\,\mathrm du\). Since \(A_L\) is uniformly
bounded and \(g\in L^1(\mathbb R)\), dominated convergence proves
\eqref{eq:macroscopic-covariance-limit}.
\end{proof}

For \(L\ge1\) and \(\varphi\in C_c^\infty(\mathbb R)\), recall
\[
\Xi_L(\varphi)
:=
\frac{1}{\sqrt L}
\left(
\int_{\mathbb R}\varphi(x/L)\,\mathcal X_I(\mathrm dx)
-
\lambda_I L\int_{\mathbb R}\varphi(u)\,\mathrm du
\right).
\]

\begin{proof}[Proof of Theorem~\ref{thm:white-noise-limit}]
Fix \(\varphi\in C_c^\infty(\mathbb R)\), set
\(\varphi_L(x):=\varphi(x/L)\), and
\(S_L:=\int_{\mathbb R}\varphi_L\,\mathrm d\mathcal X_I\). Then
\(\Xi_L(\varphi)=(S_L-\mathbb E S_L)/\sqrt L\). By
Lemma~\ref{lem:variance-linear-statistics} and
Lemma~\ref{lem:macroscopic-covariance-limit},
\[
\lim_{L\to\infty}\operatorname{Var}(\Xi_L(\varphi))
=
\left(\lambda_I+\int_{\mathbb R}g_I(r)\,\mathrm dr\right)
\int_{\mathbb R}\varphi(u)^2\,\mathrm du
=
\sigma_I^2\int_{\mathbb R}\varphi(u)^2\,\mathrm du .
\]

For \(q\ge3\), centering does not affect the \(q\)-th cumulant. Applying
Corollary~\ref{cor:linear-statistic-cumulant-bounds} to \(\varphi_L\) gives
\[
\left|\operatorname{cum}_q(\Xi_L(\varphi))\right|
=
L^{-q/2}\left|\operatorname{cum}_q(S_L)\right|
\le
C_{I,q,\varphi}L^{1-q/2}
\to0 .
\]
Thus the cumulants converge to those of the centered Gaussian with variance
\(\sigma_I^2\int_{\mathbb R}\varphi^2\). By the moment--cumulant formula
\eqref{eq:appendix-moment-cumulant-inversion} and moment determinacy of the
Gaussian law,
\[
\Xi_L(\varphi)
\Rightarrow
\mathcal N\left(0,\sigma_I^2\int_{\mathbb R}\varphi(u)^2\,\mathrm du\right).
\]

For joint convergence, let \(a_1,\ldots,a_m\in\mathbb R\) and
\(\varphi_1,\ldots,\varphi_m\in C_c^\infty(\mathbb R)\). By linearity,
\[
\sum_{j=1}^m a_j\Xi_L(\varphi_j)
=
\Xi_L\left(\sum_{j=1}^m a_j\varphi_j\right).
\]
The one-dimensional convergence applied to \(\sum_j a_j\varphi_j\), together
with the Cram\'er--Wold theorem \cite[Corollary~6.5]{Kallenberg2021}, gives the
claimed centered Gaussian vector; the covariance is obtained by polarization.
\end{proof}

The proof of Theorem~\ref{thm:white-noise-limit} uses only boundedness, compact support, and Lemma~\ref{lem:macroscopic-covariance-limit}; hence it also gives the following extension.

\begin{corollary}
\label{cor:white-noise-bounded-test-functions}
The finite-dimensional convergence in Theorem~\ref{thm:white-noise-limit} remains valid for bounded measurable compactly supported test functions.
\end{corollary}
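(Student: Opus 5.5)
The corollary asks for finite-dimensional convergence for test functions that are merely bounded, measurable and compactly supported. The plan is to rerun the proof of Theorem~\ref{thm:white-noise-limit} word for word, with \(C_c^\infty(\mathbb R)\) replaced by the class \(\mathcal B_c\) of such functions. At each step I would check that smoothness is never used.

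\textbf{Mean and variance.} Fix \(\varphi\in\mathcal B_c\) and set \(\varphi_L(x):=\varphi(x/L)\) and \(S_L:=\int\varphi_L\,\mathrm d\mathcal X_I\). By Lemma~\ref{lem:local-finiteness-intensity}, \(\mathbb E S_L=\lambda_I L\int\varphi\), so \(\Xi_L(\varphi)\) is still the centered and normalized statistic. Lemma~\ref{lem:variance-linear-statistics} is stated for bounded measurable compactly supported functions, so it applies to \(\varphi_L\). This gives
\[
\operatorname{Var}(\Xi_L(\varphi))=\lambda_I\int\varphi^2+\frac1L\int\int\varphi_L(x)\varphi_L(y)g_I(y-x)\,\mathrm dx\,\mathrm dy .
\]
Lemma~\ref{lem:macroscopic-covariance-limit} only requires \(f,h\in L^\infty\) with compact support. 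Its proof uses translation continuity in \(L^1\), which holds for any compactly supported \(L^\infty\) function since such a function lies in \(L^1\). Taking \(f=h=\varphi\) and \(g=g_I\in L^1(\mathbb R)\), we get \(\operatorname{Var}(\Xi_L(\varphi))\to\sigma_I^2\int\varphi^2\).

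\textbf{Higher cumulants.} For \(q\ge3\), Corollary~\ref{cor:linear-statistic-cumulant-bounds} applies directly to \(\varphi_L\in\mathcal B_c\). We have \(|\operatorname{supp}\varphi_L|=L|\operatorname{supp}\varphi|\), which is finite because \(\operatorname{supp}\varphi\) is compact, and \(\|\varphi_L\|_\infty=\|\varphi\|_\infty\). Hence \(|\operatorname{cum}_q(\Xi_L(\varphi))|\le C_{I,q,\varphi}L^{1-q/2}\to0\). All moments are finite by Lemma~\ref{lem:variance-linear-statistics}. The method of moments, together with moment determinacy of the Gaussian, then gives the one-dimensional limit \(\mathcal N(0,\sigma_I^2\int\varphi^2)\).

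\textbf{Joint convergence.} The class \(\mathcal B_c\) is a vector space and \(\varphi\mapsto\Xi_L(\varphi)\) is linear. So for \(\varphi_1,\dots,\varphi_m\in\mathcal B_c\) and reals \(a_1,\dots,a_m\), the combination \(\sum_j a_j\Xi_L(\varphi_j)=\Xi_L(\sum_j a_j\varphi_j)\) is covered by the one-dimensional case. Cram\'er--Wold then gives joint Gaussian convergence, and polarization identifies the covariance as \(\sigma_I^2\int\varphi\psi\).

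\textbf{Main point to check.} The only step needing care is Lemma~\ref{lem:macroscopic-covariance-limit}: one must confirm that \(A_L(r)\to\int fh\) without continuity of \(h\). This follows from \(\|h(\cdot+r/L)-h\|_{L^1}\to0\) combined with the bound \(|A_L(r)-\int fh|\le\|f\|_\infty\|h(\cdot+r/L)-h\|_{L^1}\). Everything else already holds verbatim for \(\mathcal B_c\).
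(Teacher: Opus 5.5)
Your proposal is correct and follows the paper's own justification. The paper derives the corollary by observing that the proof of Theorem~\ref{thm:white-noise-limit} uses only boundedness, compact support, and Lemma~\ref{lem:macroscopic-covariance-limit}, which is already stated for compactly supported \(L^\infty\) functions. You have checked this step by step, including the \(L^1\)-translation-continuity argument that lets that lemma work without any continuity of \(h\).
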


\subsection{Finite-dimensional convergence of the counting path}
\label{subsec:fclt-finite-dimensional}

The finite-dimensional convergence follows from the white-noise limit applied to
indicator functions.

\begin{lemma}
\label{lem:fclt-finite-dimensional-convergence}
For every \(m\ge1\) and \(0\le t_1<\cdots<t_m\le1\),
\[
(B_L(t_1),\ldots,B_L(t_m))
\Rightarrow
(B(t_1),\ldots,B(t_m)),
\]
where \(B\) is Brownian motion with covariance
\(\mathbb E[B(s)B(t)]=\sigma_I^2\min\{s,t\}\).
\end{lemma}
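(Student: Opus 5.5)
The plan is to deduce the lemma from Corollary~\ref{cor:white-noise-bounded-test-functions} applied to indicator test functions. For \(0\le t\le1\) set \(\psi_t:=\mathbf 1_{[0,t]}\), which is bounded, measurable and compactly supported. Then \(\psi_t(x/L)=\mathbf 1_{[0,Lt]}(x)\), so
\[
\int_{\mathbb R}\psi_t(x/L)\,\mathcal X_I(\mathrm dx)=\mathcal X_I([0,Lt])=N_I(Lt),
\qquad
\lambda_I L\int_{\mathbb R}\psi_t(u)\,\mathrm du=\lambda_I Lt .
\]
This gives the exact identity \(B_L(t)=\Xi_L(\psi_t)\) for every \(t\in[0,1]\) and every \(L\ge1\). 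When \(t=0\), \(\psi_0=\mathbf 1_{\{0\}}\) vanishes Lebesgue-a.e. Simplicity (Lemma~\ref{lem:simplicity-projected}) together with \(\mathbb E\mathcal X_I(\{0\})=0\) then gives \(B_L(0)=0\) almost surely, which is consistent with \(B(0)=0\). So no special treatment is needed beyond noting that \(\psi_0\) is an admissible test function.

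Next I apply Corollary~\ref{cor:white-noise-bounded-test-functions} to \(\psi_{t_1},\ldots,\psi_{t_m}\). It gives joint convergence of \((\Xi_L(\psi_{t_1}),\ldots,\Xi_L(\psi_{t_m}))\) to a centered Gaussian vector whose covariance is
\[
\sigma_I^2\int_{\mathbb R}\psi_{t_j}\psi_{t_l}\,\mathrm du=\sigma_I^2\,|[0,t_j]\cap[0,t_l]|=\sigma_I^2\min\{t_j,t_l\}.
\]
This is exactly the covariance of \((B(t_1),\ldots,B(t_m))\) for Brownian motion with variance parameter \(\sigma_I^2\). A centered Gaussian vector is determined by its covariance, so this finishes the proof.

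The only step that needs care is checking that the corollary really covers indicators, since Theorem~\ref{thm:white-noise-limit} is stated only for \(C_c^\infty\) functions. I would re-trace the argument for \(\varphi=\sum_j a_j\psi_{t_j}\) with the following three checks.
\begin{itemize}
\item The variance limit uses Lemma~\ref{lem:variance-linear-statistics} and Lemma~\ref{lem:macroscopic-covariance-limit}, and both are stated for bounded compactly supported functions.
\item The higher cumulants are handled by Corollary~\ref{cor:linear-statistic-cumulant-bounds}. Here \(|\operatorname{supp}\varphi(\cdot/L)|\le L\) and \(\|\varphi\|_\infty\le\sum_j|a_j|\) uniformly in \(L\), so \(\operatorname{cum}_q=O(L^{1-q/2})\to0\) for \(q\ge3\).
\item Cramér--Wold and moment determinacy of the Gaussian then apply unchanged.
\end{itemize}
These checks remove the only potential gap.
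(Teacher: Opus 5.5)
Your proposal is correct and follows the paper's proof. The paper likewise writes \(\sum_j a_jB_L(t_j)=\Xi_L(\psi)\) with \(\psi=\sum_j a_j\mathbf 1_{[0,t_j]}\), invokes Corollary~\ref{cor:white-noise-bounded-test-functions} to get a centered Gaussian limit with variance \(\sigma_I^2\sum_{i,j}a_ia_j\min\{t_i,t_j\}\), and concludes by Cram\'er--Wold, which is what your application of the corollary's joint statement to the indicators \(\psi_{t_j}\) amounts to. Your check that the corollary covers indicators, using Lemmas~\ref{lem:variance-linear-statistics} and~\ref{lem:macroscopic-covariance-limit} and Corollary~\ref{cor:linear-statistic-cumulant-bounds} (the latter giving \(\operatorname{cum}_q=O(L^{1-q/2})\)), is sound and matches the paper's justification of that corollary.
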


\begin{proof}
Fix \(a_1,\ldots,a_m\in\mathbb R\) and set
\(\psi:=\sum_{j=1}^m a_j\mathbf 1_{[0,t_j]}\). Then
\[
\sum_{j=1}^m a_jB_L(t_j)
=
\frac{1}{\sqrt L}
\left(
\int_{\mathbb R}\psi(x/L)\,\mathcal X_I(\mathrm dx)
-
\lambda_I L\int_{\mathbb R}\psi(u)\,\mathrm du
\right).
\]
By Corollary~\ref{cor:white-noise-bounded-test-functions}, this converges to a centered
Gaussian variable with variance
\[
\sigma_I^2\int_{\mathbb R}\psi(u)^2\,\mathrm du
=
\sigma_I^2\sum_{i,j=1}^m a_ia_j\min\{t_i,t_j\}.
\]
The Cram\'er--Wold theorem \cite[Corollary~6.5]{Kallenberg2021} gives the
claimed finite-dimensional convergence.
\end{proof}

\subsection{Tightness of polygonal interpolations}
\label{subsec:fclt-tightness}

It remains to prove tightness in \(D([0,1])\). We compare \(B_L\) with the
polygonal interpolation of the centered count process sampled at integer times.
Set \(X(u):=N_I(u)-\lambda_Iu\), \(u\ge0\), and, for \(L\ge1\) and \(0\le t\le1\),
\[
\widetilde B_L(t)
:=
\frac{1}{\sqrt L}
\left[
X(\lfloor Lt\rfloor)
+
(Lt-\lfloor Lt\rfloor)
\bigl(X(\lfloor Lt\rfloor+1)-X(\lfloor Lt\rfloor)\bigr)
\right],
\]
Then \(\widetilde B_L\in C([0,1])\) and \(\widetilde B_L(0)=0\).

\begin{lemma}
\label{lem:fourth-moment-increment-bound}
There exists \(C<\infty\), depending only on \(I\), such that
\(\mathbb E[|X(v)-X(u)|^4]\le C(1+|v-u|)^2\) for all \(0\le u\le v\).
\end{lemma}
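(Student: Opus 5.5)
By stationarity (Lemma~\ref{lem:stationarity-projected}), $X(v)-X(u)$ has the same law as $X(h)=N_I(h)-\lambda_I h$ with $h:=v-u\ge0$. It therefore suffices to bound $\mathbb E[X(h)^4]$ by $C(1+h)^2$ for all $h\ge0$. The plan is to express the centered fourth moment through cumulants:
\[
\mathbb E[X(h)^4]=\operatorname{cum}_4(N_I(h))+3\operatorname{cum}_2(N_I(h))^2 .
\]
This identity holds because $X(h)$ is centered, so the first cumulant vanishes and the terms involving $\kappa_1$ and $\kappa_3$ drop out. Finiteness of all moments is guaranteed by Lemma~\ref{lem:variance-linear-statistics} applied to $\varphi=\mathbf 1_{[0,h]}$.

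The next step is to show that each ordinary cumulant $\operatorname{cum}_q(N_I(h))$, for $q=2,4$, is bounded by $C_q(1+h)$ for every $h\ge0$, not only for $h\ge1$.

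\begin{itemize}
\item Corollary~\ref{cor:linear-ordinary-cumulant-bounds} gives $|\operatorname{cum}_q(N_I(h))|\le C'_{I,q}h$ for $h\ge1$.
\item That proof in fact works for every $h>0$. Lemma~\ref{lem:ordinary-cumulants-linear-statistics} writes $\operatorname{cum}_q$ as a finite sum over partitions of $[q]$. The one-block term equals $\lambda_I h$. Each term with $m\ge2$ blocks is bounded, via Lemma~\ref{lem:multilinear-cumulant-estimate} with $p_1=1$ and $p_j=\infty$ for $j\ge2$, by $h\,\|c_I^{(m)}\|_{L^1}$, and this is finite by Theorem~\ref{thm:brillinger-mixing}.
\item No step uses $h\ge1$, since $\|\mathbf 1_{[0,h]}\|_{L^1}=h$ and $\|\mathbf 1_{[0,h]}\|_{L^\infty}\le1$.
\end{itemize}

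Hence $|\operatorname{cum}_q(N_I(h))|\le C'_{I,q}\,h\le C'_{I,q}(1+h)$ for all $h\ge0$. Alternatively, for $q=2$ one can read off directly $\operatorname{Var}(N_I(h))\le(\lambda_I+\|g_I\|_{L^1})h$ from the variance identity in the proof of Proposition~\ref{prop:variance-formula}.

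Combining the two steps gives
\[
\mathbb E[X(h)^4]\le C'_{I,4}(1+h)+3\,(C'_{I,2})^2(1+h)^2\le C(1+h)^2,
\qquad C:=C'_{I,4}+3(C'_{I,2})^2 .
\]
This constant depends only on $I$.

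The only point needing care is the small-$h$ regime, where the stated corollaries assume $L\ge1$. As explained above, the same multilinear estimate applies verbatim for $h<1$. If one prefers to avoid re-checking it, one can instead use the crude bound
\[
|X(h)|\le N_I(1)+\lambda_I \qquad (h\le1).
\]
The fourth moment of $N_I(1)$ is finite by Lemma~\ref{lem:variance-linear-statistics}, so this bound is uniform in $h\le1$ and can be absorbed into $C$.
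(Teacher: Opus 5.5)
Your proof is correct and follows essentially the same route as the paper. The paper reduces by stationarity to $N_I(h)-\lambda_I h$, noting as you did not that $\mathcal X_I$ charges no fixed point, so the endpoint of $(u,v]$ versus $[0,h]$ is harmless. It then uses $\mathbb E[Y^4]=\operatorname{cum}_4(Y)+3\operatorname{cum}_2(Y)^2$ with the linear cumulant bounds for $h\ge1$, and the same crude bound $|X(v)-X(u)|\le\mathcal X_I([u,u+1])+\lambda_I$ for $h<1$. Your observation that the multilinear estimate gives $|\operatorname{cum}_q(N_I(h))|\le C h$ for every $h>0$ is valid and simply removes the need for that case split.
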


\begin{proof}
By stationarity and since fixed points are not charged, \(X(v)-X(u)\) has the
same law as \(N_I(v-u)-\lambda_I(v-u)\). Let \(\ell:=v-u\). If \(\ell<1\), then
\(|X(v)-X(u)|\le \mathcal X_I([u,u+1])+\lambda_I\), whose fourth moment is
finite by stationarity and the factorial moment bounds used in
Lemma~\ref{lem:variance-linear-statistics}.

For \(\ell\ge1\), Corollary~\ref{cor:linear-ordinary-cumulant-bounds} gives
\(|\operatorname{cum}_2(N_I(\ell))|+|\operatorname{cum}_4(N_I(\ell))|\le C_1\ell\).
The same bounds hold for \(Y:=N_I(\ell)-\lambda_I\ell\). Since \(Y\) is centered,
\eqref{eq:appendix-fourth-moment-centered} gives
\(\mathbb E[|Y|^4]=\operatorname{cum}_4(Y)+3\operatorname{cum}_2(Y)^2\le C_2\ell^2\).
The two cases prove the claim.
\end{proof}

\begin{lemma}
\label{lem:polygonal-tightness}
The family \((\widetilde B_L)_{L\ge1}\) is tight in \(D([0,1])\).
\end{lemma}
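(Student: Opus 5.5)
Since each \(\widetilde B_L\) has continuous paths and starts at \(0\), tightness in \(C([0,1])\) implies tightness in \(D([0,1])\): the inclusion \(C([0,1])\hookrightarrow D([0,1])\) is continuous for the \(J_1\) topology. We will therefore prove tightness in \(C([0,1])\) through a Kolmogorov--Chentsov moment criterion of the form
\[
\mathbb E\bigl[|\widetilde B_L(t)-\widetilde B_L(s)|^4\bigr]\le C|t-s|^{2},\qquad 0\le s\le t\le1,\ L\ge1,
\]
with \(C\) independent of \(L\). The exponent \(2=1+\beta\) with \(\beta=1>0\) is sufficient. We expect the appendix (Appendix~\ref{app:tightness}) to contain the version for polygonal processes, where the bound is only needed at grid points together with the linear interpolation structure. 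Otherwise we would use the standard Billingsley criterion directly.

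\emph{Step 1: grid points.} Let \(s=j/L\) and \(t=k/L\) with integers \(j\le k\). Lemma~\ref{lem:fourth-moment-increment-bound} gives
\[
\mathbb E|\widetilde B_L(t)-\widetilde B_L(s)|^4=L^{-2}\,\mathbb E|X(k)-X(j)|^4\le C L^{-2}(1+k-j)^2.
\]
If \(k>j\), then \(1+k-j\le 2(k-j)\), so the right-hand side is at most \(4C(t-s)^2\). This is exactly why we interpolate at integer times. The additive constant \(1\) in the lemma is harmless once the increment covers at least one full step, but it would destroy the bound for short increments of the raw process \(B_L\).

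\emph{Step 2: general \(s,t\).} Suppose first that \(s\) and \(t\) lie in the same cell \([j/L,(j+1)/L]\). On that cell \(\widetilde B_L\) is linear with slope \(L^{1/2}(X(j+1)-X(j))\), so
\[
\mathbb E|\widetilde B_L(t)-\widetilde B_L(s)|^4=L^2(t-s)^4\,\mathbb E|X(j+1)-X(j)|^4\le 4C L^2(t-s)^4\le 4C(t-s)^2,
\]
using \(L(t-s)\le1\). If \(s\) and \(t\) lie in different cells, we split the increment at the nearest grid points \(s\le \lceil Ls\rceil/L\le\lfloor Lt\rfloor/L\le t\). We apply the elementary inequality \((a+b+c)^4\le 27(a^4+b^4+c^4)\). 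The middle piece is controlled by Step 1 and the two end pieces by the same-cell bound. Each of the three intervals has length at most \(t-s\), so all three contributions are bounded by a constant times \((t-s)^2\).

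\emph{Step 3: conclusion.} Combining Steps 1 and 2 with \(\widetilde B_L(0)=0\), Kolmogorov's criterion gives tightness in \(C([0,1])\), and hence in \(D([0,1])\). The only real obstacle is the short-increment regime, where Lemma~\ref{lem:fourth-moment-increment-bound} yields only \(O(1)\) rather than \(O(|v-u|^2)\). The polygonal interpolation handles this regime through the linear-in-cell scaling of Step 2. The comparison of \(B_L\) with \(\widetilde B_L\) is not needed here and is deferred.
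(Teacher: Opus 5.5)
Your proof is correct and follows essentially the same route as the paper: a Kolmogorov--Chentsov fourth-moment bound \(\mathbb E|\widetilde B_L(t)-\widetilde B_L(s)|^4\le C|t-s|^2\), obtained from Lemma~\ref{lem:fourth-moment-increment-bound} for increments spanning grid points and from the linear-in-cell structure of the interpolation for short increments. The only difference is bookkeeping: you split at the nearest grid points and control the end pieces by the same-cell bound, whereas the paper splits according to whether \(L(t-s)\ge1\), bounds the end pieces crudely by full-cell increments, and absorbs them using \(L(t-s)\ge1\).
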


\begin{proof}
Since each \(\widetilde B_L\) is continuous and \(\widetilde B_L(0)=0\), the
Kolmogorov--Chentsov tightness criterion
(Lemma~\ref{lem:kolmogorov-chentsov-tightness}), together with the embedding
\(C([0,1])\hookrightarrow D([0,1])\), reduces the proof to
\(\mathbb E[|\widetilde B_L(t)-\widetilde B_L(s)|^4]\le C|t-s|^2\) for
\(0\le s<t\le1\). Let \(a:=Ls\), \(b:=Lt\), and let \(\widetilde X\) be the
polygonal interpolation of \((X(n))_{n\ge0}\). Then
\[
\widetilde B_L(t)-\widetilde B_L(s)
=
L^{-1/2}\bigl(\widetilde X(b)-\widetilde X(a)\bigr).
\]

If \(b-a\ge1\), set \(k:=\lfloor a\rfloor\) and \(\ell:=\lfloor b\rfloor\). Then
\[
|\widetilde X(b)-\widetilde X(a)|
\le
|X(\ell)-X(k)|+|X(k+1)-X(k)|+|X(\ell+1)-X(\ell)|.
\]
Lemma~\ref{lem:fourth-moment-increment-bound} gives
\[
\mathbb E[|\widetilde B_L(t)-\widetilde B_L(s)|^4]
\le
C L^{-2}\bigl((1+\ell-k)^2+1\bigr)
\le
C|t-s|^2,
\]
since \(\ell-k\le L(t-s)+1\) and \(L(t-s)\ge1\).

If \(b-a<1\), then \([a,b]\) meets at most two unit mesh intervals, so
\[
|\widetilde X(b)-\widetilde X(a)|
\le
(b-a)\bigl(|\Delta X(n_1)|+|\Delta X(n_2)|\bigr),
\qquad
\Delta X(n):=X(n+1)-X(n),
\]
with one term omitted if necessary. By stationarity and
Lemma~\ref{lem:fourth-moment-increment-bound}, \(\Delta X(n)\) has uniformly
bounded fourth moment. Hence
\[
\mathbb E[|\widetilde B_L(t)-\widetilde B_L(s)|^4]
\le
C L^2(t-s)^4
\le
C|t-s|^2,
\]
because \(L(t-s)<1\). This proves the required Kolmogorov--Chentsov bound, hence
tightness in \(D([0,1])\).
\end{proof}

\subsection{Tightness transfer and completion of the FCLT}
\label{subsec:proof-fclt}

It remains to transfer tightness from the polygonal interpolation to the original
counting path.

\begin{lemma}
\label{lem:polygonal-approximation}
As \(L\to\infty\),
\(\sup_{0\le t\le1}|B_L(t)-\widetilde B_L(t)|\to0\) in probability.
\end{lemma}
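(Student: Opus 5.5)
The plan is to bound the difference \(B_L-\widetilde B_L\) pathwise by oscillations of the counting process over unit mesh intervals, and then control the maximum of these oscillations with a union bound and fourth moments.

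\textbf{Pathwise reduction.} Fix \(t\in[0,1]\) and set \(n:=\lfloor Lt\rfloor\). Then \(B_L(t)-\widetilde B_L(t)\) equals \(L^{-1/2}\) times the difference between \(X(Lt)\) and a convex combination of \(X(n)\) and \(X(n+1)\). Since \(N_I\) is nondecreasing and the drift \(\lambda_I u\) changes by at most \(\lambda_I\) over a unit interval, we get
\[
|X(Lt)-\widetilde X(Lt)|
\le
2\bigl(\mathcal X_I([n,n+1])+\lambda_I\bigr).
\]
Hence
\[
\sup_{0\le t\le1}|B_L(t)-\widetilde B_L(t)|
\le
\frac{2}{\sqrt L}\Bigl(\max_{0\le n\le \lceil L\rceil}\mathcal X_I([n,n+1])+\lambda_I\Bigr).
\]

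\textbf{Probabilistic bound.} By stationarity (Lemma~\ref{lem:stationarity-projected}), each \(\mathcal X_I([n,n+1])\) has the same law as \(\mathcal X_I([0,1])\). Its fourth moment is finite, since its factorial moments are bounded by \(\lambda_I^m\) as in the proof of Lemma~\ref{lem:variance-linear-statistics}. A union bound and Markov's inequality then give, for every \(\varepsilon>0\),
\[
\mathbb P\Bigl(\max_{0\le n\le\lceil L\rceil}\mathcal X_I([n,n+1])>\varepsilon\sqrt L\Bigr)
\le
(L+2)\,\frac{\mathbb E[\mathcal X_I([0,1])^4]}{\varepsilon^4L^2}
\to0.
\]
Since \(\lambda_I/\sqrt L\to0\) as well, the supremum tends to zero in probability.

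The only step needing care is the pathwise inequality at the endpoint \(t=1\), where \(n+1\) can exceed \(L\). This is handled by letting the maximum run up to \(\lceil L\rceil\). The fourth moment is used only to make the union bound summable, and any moment of order greater than two would suffice.
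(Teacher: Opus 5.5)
Your proposal is correct and follows the paper's proof essentially step for step. You make the same pathwise reduction to \(\frac{2}{\sqrt L}\bigl(\max_{0\le n\le\lceil L\rceil}\mathcal X_I([n,n+1])+\lambda_I\bigr)\), and you then control this term in the same way, using stationarity, the finite fourth moment of \(\mathcal X_I([0,1])\), and a union bound with Markov's inequality.
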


\begin{proof}
Fix \(t\in[0,1]\) and put \(k:=\lfloor Lt\rfloor\). Since \(Lt\in[k,k+1]\),
\(N_I\) is nondecreasing, and \(\widetilde B_L(t)\) is a convex combination of
\(X(k)/\sqrt L\) and \(X(k+1)/\sqrt L\),
\[
\sup_{0\le t\le1}|B_L(t)-\widetilde B_L(t)|
\le
\frac{2}{\sqrt L}
\max_{0\le k\le\lceil L\rceil}
\bigl(\mathcal X_I([k,k+1])+\lambda_I\bigr).
\]
By stationarity, the finite fourth moment of \(\mathcal X_I([0,1])\), and a
union bound, for every \(\varepsilon>0\),
\[
\mathbb P\left(
\max_{0\le k\le\lceil L\rceil}\mathcal X_I([k,k+1])
>\varepsilon\sqrt L
\right)
\le
(\lceil L\rceil+1)
\frac{\mathbb E[\mathcal X_I([0,1])^4]}{\varepsilon^4L^2}
\to0 .
\]
The deterministic \(\lambda_I/\sqrt L\) term is negligible, proving the claim.
\end{proof}

\begin{lemma}
\label{lem:fclt-tightness}
The family \((B_L)_{L\ge1}\) is asymptotically tight in \(D([0,1])\): for every
sequence \(L_n\to\infty\), \((B_{L_n})_{n\ge1}\) is tight.
\end{lemma}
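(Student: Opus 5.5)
The plan is to transfer tightness from the polygonal interpolations to the counting paths through a standard approximation argument in the Skorokhod space. Fix a sequence \(L_n\to\infty\). By Lemma~\ref{lem:polygonal-tightness}, \((\widetilde B_{L_n})_{n\ge1}\) is tight in \(D([0,1])\). Since every limit point of this family is supported on \(C([0,1])\), it is in fact C-tight. By Lemma~\ref{lem:polygonal-approximation}, we also have
\(\rho_n:=\sup_{0\le t\le1}|B_{L_n}(t)-\widetilde B_{L_n}(t)|\to0\) in probability. Because the Skorokhod \(J_1\) distance satisfies \(d_{J_1}(x,y)\le\|x-y\|_\infty\), it follows that \(d_{J_1}(B_{L_n},\widetilde B_{L_n})\to0\) in probability.

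The key step is to use the following standard fact (for example via \cite{Kallenberg2021}, or through the tightness tools in Appendix~\ref{app:tightness}): if \(Y_n\) is tight in a Polish space \((S,d)\) and \(d(X_n,Y_n)\to0\) in probability, then \(X_n\) is tight. I would prove this directly rather than cite it. Given \(\varepsilon>0\), choose a compact \(K\) with \(\mathbb P(Y_n\in K)\ge1-\varepsilon\) for all \(n\). Then, for every \(\delta>0\), the set \(K^\delta\) (the closed \(\delta\)-neighbourhood of \(K\)) satisfies \(\mathbb P(X_n\in K^\delta)\ge1-2\varepsilon\) for all \(n\ge n_0(\delta)\). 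A diagonal choice \(\delta_j\downarrow0\) then produces the totally bounded closed set \(\bigcap_j (K^{\delta_j}\cup F_j)\), where \(F_j\) is a compact set covering the finitely many initial \(X_n\). This set is compact and carries mass at least \(1-4\varepsilon\) uniformly in \(n\).

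A cleaner alternative avoids the general metric argument and is the route I would actually use. Tightness in \(D([0,1])\) follows from two conditions. The first is tightness of \(\sup_t|B_{L_n}(t)|\), which holds because \(\sup|B_{L_n}|\le\sup|\widetilde B_{L_n}|+\rho_n\). The second is control of the \(J_1\)-modulus, which I would bound using \(w'_{x}(\delta)\le w_{y}(2\delta)+2\|x-y\|_\infty\). Applied with \(y=\widetilde B_{L_n}\), whose uniform modulus \(w_{\widetilde B_{L_n}}(\delta)\) is controlled by the Kolmogorov--Chentsov estimate, this gives
\(\lim_{\delta\downarrow0}\limsup_n\mathbb P(w'_{B_{L_n}}(\delta)>\eta)=0\).

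I expect the main obstacle to be bookkeeping rather than any genuine difficulty. One must ensure that the C-tightness coming from Kolmogorov--Chentsov is stated in terms of the uniform modulus, and not merely as tightness in \(D\), so that the perturbation by \(\rho_n\) can be absorbed. It also requires checking that asymptotic tightness along sequences is exactly what is needed, since finitely many indices are trivially tight.
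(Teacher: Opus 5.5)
Your proposal is correct, and your preferred second route differs from the paper's. The paper argues abstractly. By Prokhorov, every subsequence of \((\widetilde B_{L_n})\) has a further subsequence converging weakly to some \(Y\). The converging-together theorem \cite[Theorem~3.1]{Billingsley1999}, with \(d_{J_1}(B_{L_n},\widetilde B_{L_n})\le\sup_t|B_{L_n}(t)-\widetilde B_{L_n}(t)|\to0\) in probability, transfers this convergence to \(B_{L_n}\). Relative compactness then gives tightness by the converse half of Prokhorov's theorem in the Polish space \(D([0,1])\).

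Your first route proves the same transfer principle directly, by building compact sets by hand. Your second route replaces it with the \(D\)-specific criterion \cite[Theorem~13.2]{Billingsley1999}, feeding in the uniform modulus of the C-tight interpolations through \(w'_x(\delta)\le w_y(2\delta)+2\|x-y\|_\infty\). The paper's argument is shorter and works verbatim in any Polish space. Yours is more hands-on and gives slightly more. The same perturbation bound \(w_{B_{L_n}}(\delta)\le w_{\widetilde B_{L_n}}(\delta)+2\rho_n\) shows that \((B_{L_n})\) is itself C-tight, so continuity of all limit points follows from tightness alone, not from the finite-dimensional identification with Brownian motion.

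Two small points in your first route need fixing.
\begin{itemize}
\item The tolerances must be summable. Keep a single \(K\) for the event \(\{Y_n\notin K\}\), and choose \(n_0(\delta_j)\) and \(F_j\) at level \(\varepsilon2^{-j}\). A fixed \(2\varepsilon\) for each \(j\) does not survive the union over \(j\).
\item The step ``closed and totally bounded implies compact'' needs a complete metric, and \(d_{J_1}\) is not complete, as Appendix~\ref{app:tightness} stresses. You can work instead with Billingsley's complete metric \(d^\circ\), which also satisfies \(d^\circ(x,y)\le\|x-y\|_\infty\). Alternatively, check sequential compactness of \(\bigcap_j(K^{\delta_j}\cup F_j)\) directly: a sequence in it either has infinitely many terms in some compact \(F_j\), or satisfies \(d(x_m,K)\to0\), and in both cases it has a subsequence converging inside the set.
\end{itemize}
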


\begin{proof}
Let \(L_n\to\infty\). By Lemma~\ref{lem:polygonal-tightness},
\((\widetilde B_{L_n})_{n\ge1}\) is tight in \(D([0,1])\). Moreover, by Lemma~\ref{lem:polygonal-approximation} and the fact that the
Skorokhod \(J_1\) metric \(d_{J_1}\) is controlled by the uniform distance, as
recalled in Section~\ref{subsec:appendix-skorokhod-space},
\[
d_{J_1}(B_{L_n},\widetilde B_{L_n})
\le
\sup_{0\le t\le1}|B_{L_n}(t)-\widetilde B_{L_n}(t)|
\to0
\]
in probability. Given any subsequence, choose a further subsequence along which
\(\widetilde B_{L_n}\Rightarrow Y\). Then Billingsley's convergence theorem
\cite[Theorem~3.1]{Billingsley1999} gives \(B_{L_n}\Rightarrow Y\) along the
same further subsequence. Hence every subsequence has a weakly convergent
subsequence, so \((B_{L_n})_{n\ge1}\) is tight.
\end{proof}

\begin{proof}[Proof of Theorem~\ref{thm:fclt}]
Let \(L_n\to\infty\). By Lemma~\ref{lem:fclt-tightness},
\((B_{L_n})_{n\ge1}\) is tight in \(D([0,1])\). By
Lemma~\ref{lem:fclt-finite-dimensional-convergence}, its finite-dimensional
distributions converge to those of Brownian motion \(B\) with covariance
\(\mathbb E[B(s)B(t)]=\sigma_I^2\min\{s,t\}\). Since \(B\) has continuous sample
paths, tightness in \(D([0,1])\) and convergence of finite-dimensional
distributions imply \(B_{L_n}\Rightarrow B\) in the Skorokhod \(J_1\) topology
(see Lemma~\ref{lem:tightness-plus-finite-dimensional-convergence}, or
Billingsley~\cite[Section~13]{Billingsley1999}). Since \(L_n\to\infty\) was
arbitrary, \(B_L\Rightarrow B\) in \(D([0,1])\).
\end{proof}

\begin{proof}[Proof of Corollary~\ref{cor:interval-count-clt}]
The evaluation map \(x\mapsto x(1)\) is continuous at every \(x\in D([0,1])\)
that is continuous at \(1\). Since Brownian motion has continuous sample paths
almost surely, Theorem~\ref{thm:fclt} and the continuous mapping theorem give
\((N_I(L)-\lambda_I L)/\sqrt L=B_L(1)\Rightarrow B(1)\). As
\(B(1)\sim\mathcal N(0,\sigma_I^2)\), the result follows.
\end{proof}

\begin{appendix}
\section{Point-process cumulants}
\label{app:cumulants}

This appendix records the point-process conventions used in the main text and separates the general cumulant formalism from the model-specific analysis of the projected Hardy--Szeg\H{o} process. All point processes considered here are defined on \(\mathbb{R}\), unless stated otherwise; the same definitions apply, with the evident changes, to point processes on \(\mathbb{H}\) with respect to planar Lebesgue measure \(\mathrm{d}A\). For background on locally finite point processes, factorial moment measures, and point-process conventions, see
\cite{DaleyVereJones2003,MollerWaagepetersen2004,DaleyVereJones2008,
LastPenrose2018}.

\subsection{Factorial moment measures and ordinary cumulants}
\label{subsec:appendix-factorial-moments-ordinary-cumulants}

Let \(\mathcal{X}\) be a locally finite point process on \(\mathbb{R}\). We call it simple if \(\mathcal{X}(\{x\})\in\{0,1\}\) for every \(x\in\mathbb{R}\), almost surely, and stationary if, for every \(a\in\mathbb{R}\), the translated process \(B\mapsto\mathcal{X}(B+a)\) has the same distribution as \(\mathcal{X}\). All moment and cumulant measures below are understood in the boundedly finite sense, that is, finite on bounded Borel sets.

\begin{definition}
\label{def:appendix-factorial-moment-measures}
Let \(\mathcal{X}\) be a point process on \(\mathbb{R}\). For \(k\geq1\), the \(k\)-th factorial moment measure \(\alpha^{(k)}\) is defined on bounded Borel rectangles by
\[
\alpha^{(k)}(B_{1}\times\cdots\times B_{k})
:=
\mathbb{E}
\left[
\sum_{x_{1},\ldots,x_{k}\in\mathcal{X}}^{\neq}
\mathbf{1}_{B_{1}}(x_{1})\cdots\mathbf{1}_{B_{k}}(x_{k})
\right],
\]
where the summation is over ordered \(k\)-tuples of pairwise distinct points of \(\mathcal{X}\). The measure \(\alpha^{(1)}\) is the intensity measure.
\end{definition}

\begin{definition}
\label{def:appendix-correlation-functions}
Let \(k\geq1\). A point process \(\mathcal{X}\) admits a \(k\)-point intensity function, or \(k\)-point correlation function, if \(\alpha^{(k)}\) is absolutely continuous with respect to Lebesgue measure on \(\mathbb{R}^{k}\). Its density is denoted by \(\rho^{(k)}\), so
\(
\alpha^{(k)}(\mathrm{d}x_{1}\cdots\mathrm{d}x_{k})
=
\rho^{(k)}(x_{1},\ldots,x_{k})
\,\mathrm{d}x_{1}\cdots\mathrm{d}x_{k}.
\)
Equivalently, for every nonnegative measurable \(F:\mathbb{R}^{k}\to[0,\infty]\),
\[
\mathbb{E}
\left[
\sum_{x_{1},\ldots,x_{k}\in\mathcal{X}}^{\neq}
F(x_{1},\ldots,x_{k})
\right]
=
\int_{\mathbb{R}^{k}}
F(x_{1},\ldots,x_{k})
\rho^{(k)}(x_{1},\ldots,x_{k})
\,\mathrm{d}x_{1}\cdots\mathrm{d}x_{k}.
\]
\end{definition}

\begin{definition}
\label{def:appendix-ordinary-cumulants}
Let \(Y_{1},\ldots,Y_{k}\) be real-valued random variables with finite moments up to order \(k\). Their joint cumulant is
\begin{equation}
\label{eq:appendix-cumulant-partition-formula}
\operatorname{cum}(Y_{1},\ldots,Y_{k})
:=
\sum_{\pi\in\mathcal{P}([k])}
(-1)^{|\pi|-1}(|\pi|-1)!
\prod_{B\in\pi}
\mathbb{E}
\left[
\prod_{j\in B}Y_{j}
\right],
\end{equation}
where \([k]:=\{1,\ldots,k\}\), \(\mathcal{P}([k])\) is the set of all partitions of \([k]\), and \(|\pi|\) is the number of blocks of \(\pi\). For one random variable \(Y\), write \(\operatorname{cum}_{k}(Y):=\operatorname{cum}(Y,\ldots,Y)\).
\end{definition}

Moments are recovered from cumulants by
\begin{equation}
\label{eq:appendix-moment-cumulant-inversion}
\mathbb{E}\left[\prod_{j=1}^{k}Y_{j}\right]
=
\sum_{\pi\in\mathcal{P}([k])}
\prod_{B\in\pi}
\operatorname{cum}\left((Y_{j})_{j\in B}\right).
\end{equation}
In particular, 
\(\operatorname{cum}_1(Y)=\mathbb{E}[Y]\) and \( \operatorname{cum}_2(Y)=\operatorname{Var}(Y)\). If \(\mathbb{E}[Y]=0\), we have
\begin{equation}
\label{eq:appendix-fourth-moment-centered}
\mathbb{E}[Y^{4}]
=
\operatorname{cum}_{4}(Y)+3\operatorname{cum}_{2}(Y)^{2}.
\end{equation}

\subsection{Factorial cumulants and cumulant densities}
\label{subsec:appendix-factorial-cumulants-densities}

\begin{definition}
\label{def:appendix-factorial-cumulant-measures}
Let \(\mathcal{X}\) be a point process on \(\mathbb{R}\), and suppose that its factorial moment measures \(\alpha^{(j)}\), \(1\leq j\leq k\), exist as boundedly finite measures. The \(k\)-th factorial cumulant measure \(\gamma^{(k)}\) is defined on bounded Borel rectangles by
\[
\gamma^{(k)}(A_{1}\times\cdots\times A_{k})
=
\sum_{\pi\in\mathcal{P}([k])}
(-1)^{|\pi|-1}(|\pi|-1)!
\prod_{B\in\pi}
\alpha^{(|B|)}(A_{B}),
\]
where \(A_{B}:=\prod_{j\in B}A_{j}\), with the coordinates ordered increasingly.
\end{definition}

\begin{definition}
\label{def:appendix-factorial-cumulants-linear-statistics}
Let \(\mathcal{X}\) have \(k\)-th factorial cumulant measure \(\gamma^{(k)}\). For bounded measurable compactly supported \(\varphi_{1},\ldots,\varphi_{k}:\mathbb{R}\to\mathbb{R}\), define
\[
\operatorname{cum}_{k}^{!}
\left(
\int_{\mathbb{R}}\varphi_{1}\,\mathrm{d}\mathcal{X},
\ldots,
\int_{\mathbb{R}}\varphi_{k}\,\mathrm{d}\mathcal{X}
\right)
:=
\int_{\mathbb{R}^{k}}
\varphi_{1}(x_{1})\cdots\varphi_{k}(x_{k})
\,\gamma^{(k)}(\mathrm{d}x_{1}\cdots\mathrm{d}x_{k}).
\]
When \(\varphi_{1}=\cdots=\varphi_{k}=\varphi\), we write this as
\(\operatorname{cum}_k^! \left(\int_{\mathbb{R}}\varphi\,\mathrm{d}\mathcal{X}\right).\)
\end{definition}

We use \(\operatorname{cum}_{k}\) for ordinary cumulants of random variables and \(\operatorname{cum}_{k}^{!}\) for factorial cumulants associated with factorial cumulant measures. The following standard identity follows from \eqref{eq:appendix-cumulant-partition-formula} and Definition~\ref{def:appendix-factorial-cumulant-measures} by the usual grouping of coincident point variables in products of linear statistics.

\begin{lemma}
\label{lem:ordinary-cumulants-linear-statistics}
Let \(\mathcal X\) be a simple point process on \(\mathbb R\) with factorial
moment measures \(\alpha^{(m)}\) and factorial cumulant measures \(\gamma^{(m)}\),
\(1\le m\le q\). Let \(\psi_1,\ldots,\psi_q\) be bounded measurable functions
with compact support, and set
\(Y_j:=\int_{\mathbb R}\psi_j\,\mathrm d\mathcal X\). Then
\begin{equation}
\label{eq:ordinary-cumulants-linear-statistics}
\operatorname{cum}(Y_1,\ldots,Y_q)
=
\sum_{\pi=\{B_1,\ldots,B_{|\pi|}\}\in\mathcal P([q])}
\int_{\mathbb R^{|\pi|}}
\prod_{\ell=1}^{|\pi|}
\prod_{j\in B_\ell}\psi_j(x_\ell)
\,\gamma^{(|\pi|)}(\mathrm dx_1\cdots\mathrm dx_{|\pi|}).
\end{equation}
In particular, for every bounded measurable compactly supported \(\varphi\),
\begin{equation}
\label{eq:ordinary-cumulants-one-linear-statistic}
\operatorname{cum}_q
\left(
\int_{\mathbb R}\varphi\,\mathrm d\mathcal X
\right)
=
\sum_{\pi=\{B_1,\ldots,B_{|\pi|}\}\in\mathcal P([q])}
\int_{\mathbb R^{|\pi|}}
\prod_{\ell=1}^{|\pi|}
\varphi(x_\ell)^{|B_\ell|}
\,\gamma^{(|\pi|)}(\mathrm dx_1\cdots\mathrm dx_{|\pi|}).
\end{equation}
\end{lemma}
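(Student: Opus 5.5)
We prove \eqref{eq:ordinary-cumulants-linear-statistics} by first computing mixed moments of \(Y_1,\ldots,Y_q\) through factorial moment measures, and then passing from moments to cumulants by Möbius inversion on the partition lattice.

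\emph{Step 1: mixed moments.} Since \(\mathcal X\) is simple, for every nonempty \(S\subseteq[q]\) we expand \(\prod_{j\in S}Y_j\) as a sum over all maps from \(S\) to points of \(\mathcal X\). Grouping indices that are sent to the same point gives a partition \(\sigma\) of \(S\), with distinct points attached to distinct blocks. Taking expectations in the resulting sum over pairwise distinct points and using Definition~\ref{def:appendix-factorial-moment-measures} gives
\[
\mathbb E\Bigl[\prod_{j\in S}Y_j\Bigr]
=
\sum_{\sigma\in\mathcal P(S)}
\int \prod_{C\in\sigma}\Bigl(\prod_{j\in C}\psi_j\Bigr)(x_C)\,
\alpha^{(|\sigma|)}\bigl(\mathrm d(x_C)_{C\in\sigma}\bigr).
\]
All integrals are finite because the \(\psi_j\) are bounded with compact support and the \(\alpha^{(m)}\) are boundedly finite. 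This is the only place where simplicity enters: it makes the grouping of coincident indices exact.

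\emph{Step 2: inversion.} Denote by \(T(\tau)\) the right-hand side of \eqref{eq:ordinary-cumulants-linear-statistics} restricted to a block structure. More precisely, for a partition \(\tau\) of \([q]\), let \(\Phi_C:=\prod_{j\in C}\psi_j\) for \(C\in\tau\), and set
\[
T(\tau):=\int\prod_{C\in\tau}\Phi_C(x_C)\,\gamma^{(|\tau|)}(\mathrm dx).
\]
Definition~\ref{def:appendix-factorial-cumulant-measures} expresses \(\gamma^{(m)}\) through the \(\alpha\)'s by Möbius inversion on \(\mathcal P([m])\). Inverting, \(\alpha^{(m)}(A_1\times\cdots\times A_m)=\sum_{\rho\in\mathcal P([m])}\prod_{D\in\rho}\gamma^{(|D|)}(A_D)\); this extends from rectangles to product test functions by linearity and a monotone class argument. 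Substituting this into Step 1 shows that \(\mathbb E[\prod_{j\in S}Y_j]\) is the sum over pairs \((\sigma,\rho)\), with \(\sigma\in\mathcal P(S)\) and \(\rho\) a partition of the blocks of \(\sigma\), of \(\prod_{D\in\rho}T(\sigma|_D)\). Such a pair is the same thing as a partition \(\pi\) of \(S\) (its blocks are the unions \(\bigcup D\)), together with, for each block of \(\pi\), a partition of that block. Consequently
\[
\mathbb E\Bigl[\prod_{j\in S}Y_j\Bigr]=\sum_{\pi\in\mathcal P(S)}\prod_{E\in\pi}\kappa(E),
\qquad \kappa(E):=\sum_{\tau\in\mathcal P(E)}T(\tau).
\]

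\emph{Step 3: conclusion.} The formula in Step 2 has exactly the form of the moment--cumulant relation \eqref{eq:appendix-moment-cumulant-inversion}. Since the family \(\{\kappa(E)\}\) is uniquely determined by the moments through Möbius inversion, \eqref{eq:appendix-cumulant-partition-formula} gives \(\operatorname{cum}((Y_j)_{j\in E})=\kappa(E)\). Taking \(E=[q]\) yields \eqref{eq:ordinary-cumulants-linear-statistics}. Specializing to \(\psi_j=\varphi\) for all \(j\) gives \eqref{eq:ordinary-cumulants-one-linear-statistic}.

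The main obstacle is the bookkeeping in Step 2: one has to check that the double partition structure \((\sigma,\rho)\) corresponds bijectively to a partition \(\pi\) of \(S\) with a sub-partition of each block of \(\pi\). The combinatorics itself is standard (the composition formula for exponential generating functions). The care lies in tracking how the products \(\Phi_C\) attach to the variables of the \(\gamma\)-integrals, and in ordering the coordinates consistently with the convention \(A_B\) of Definition~\ref{def:appendix-factorial-cumulant-measures}.
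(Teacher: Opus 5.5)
Your proposal is correct and is essentially the argument the paper has in mind. The paper only states that the identity follows from the cumulant partition formula and the definition of $\gamma^{(m)}$ by grouping coincident point variables, and your Steps 1--3 carry out that grouping and the partition-lattice bookkeeping, passing through the inverse relations (cluster expansion of $\alpha^{(m)}$ and uniqueness in the moment--cumulant relation) rather than substituting directly into the alternating partition sum, which is the same M\"obius inversion read in the other direction.
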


\begin{definition}
\label{def:appendix-factorial-cumulant-densities}
Let \(\mathcal{X}\) be a simple point process on \(\mathbb{R}\), and assume that its correlation functions \(\rho^{(m)}\) exist and are locally integrable for \(1\leq m\leq k\). The \(k\)-th factorial cumulant density \(u^{(k)}\) is defined, for pairwise distinct \(x_{1},\ldots,x_{k}\in\mathbb{R}\), by
\begin{equation}
\label{eq:appendix-factorial-cumulant-density}
u^{(k)}(x_{1},\ldots,x_{k})
=
\sum_{\pi\in\mathcal{P}([k])}
(-1)^{|\pi|-1}(|\pi|-1)!
\prod_{B\in\pi}
\rho^{(|B|)}(x_{B}),
\end{equation}
where \(x_{B}\) denotes the tuple \((x_{j})_{j\in B}\). The values of \(u^{(k)}\) on diagonals may be chosen arbitrarily.
\end{definition}

The inverse relation is the cluster expansion
\begin{equation}
\label{eq:appendix-cluster-expansion}
\rho^{(k)}(x_{1},\ldots,x_{k})
=
\sum_{\pi\in\mathcal{P}([k])}
\prod_{B\in\pi}
u^{(|B|)}(x_{B}).
\end{equation}
The two formulas \eqref{eq:appendix-factorial-cumulant-density} and \eqref{eq:appendix-cluster-expansion} are related by M\"obius inversion on the lattice of set partitions.

The next identity follows by comparing the permutation expansion of determinantal correlation functions with the cluster expansion \eqref{eq:appendix-cluster-expansion}.

\begin{lemma}
\label{lem:determinantal-cycle-expansion}
Let \(\mathcal X\) be a determinantal point process with kernel \(K\). Then, for
\(k\ge2\), the \(k\)-th factorial cumulant density is, off the diagonals,
\[
u^{(k)}(x_1,\ldots,x_k)
=
(-1)^{k-1}
\sum_{\sigma\in\operatorname{Cyc}_k}
\prod_{j=1}^k K(x_j,x_{\sigma(j)}),
\]
where \(\operatorname{Cyc}_k\) is the set of all \(k\)-cycles on
\(\{1,\ldots,k\}\).
\end{lemma}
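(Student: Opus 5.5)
The plan is to prove Lemma~\ref{lem:determinantal-cycle-expansion} by checking that the proposed cycle sums satisfy the cluster expansion \eqref{eq:appendix-cluster-expansion}. Since the cluster expansion determines the factorial cumulant densities uniquely by M\"obius inversion on the partition lattice, this suffices. Concretely, for every finite index set \(B\) with \(|B|\ge1\) I define
\[
v(x_B):=(-1)^{|B|-1}\sum_{\sigma\in\operatorname{Cyc}(B)}\prod_{j\in B}K(x_j,x_{\sigma(j)}),
\]
where \(\operatorname{Cyc}(B)\) is the set of cyclic permutations of \(B\). For \(|B|=1\) the unique cycle is the identity, so \(v(x_j)=K(x_j,x_j)=\rho^{(1)}(x_j)=u^{(1)}(x_j)\). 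The goal is then to show that
\[
\rho^{(k)}(x_1,\ldots,x_k)=\sum_{\pi\in\mathcal P([k])}\prod_{B\in\pi}v(x_B)
\]
for all \(k\). Once this holds for every \(k\), an induction on \(k\) gives \(v=u^{(k)}\) off the diagonals. Indeed, the one-block term of \eqref{eq:appendix-cluster-expansion} is \(u^{(k)}\), and all other terms involve only lower orders, which agree by the induction hypothesis. Equivalently, one can apply M\"obius inversion and compare with \eqref{eq:appendix-factorial-cumulant-density}.

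To verify the identity, I start from the Leibniz expansion
\[
\rho^{(k)}=\det\bigl(K(x_i,x_j)\bigr)_{i,j=1}^k=\sum_{\sigma\in S_k}\operatorname{sgn}(\sigma)\prod_{j=1}^kK(x_j,x_{\sigma(j)}).
\]
Each \(\sigma\in S_k\) decomposes uniquely into disjoint cycles. The cycle supports form a partition \(\pi\) of \([k]\), and \(\sigma\) restricts to a cyclic permutation of each block. Conversely, a partition \(\pi\) together with a choice of cyclic permutation on each block determines a unique \(\sigma\). So the map \(\sigma\mapsto(\pi,(\sigma|_B)_{B\in\pi})\) is a bijection from \(S_k\) onto these pairs. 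Under this bijection the product \(\prod_jK(x_j,x_{\sigma(j)})\) factors over the blocks. The sign also factors: a cycle of length \(m\) has sign \((-1)^{m-1}\), so
\[
\operatorname{sgn}(\sigma)=\prod_{B\in\pi}(-1)^{|B|-1}.
\]
Regrouping the Leibniz sum by \(\pi\) therefore gives exactly \(\sum_\pi\prod_{B\in\pi}v(x_B)\).

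None of these steps is a real obstacle, since all sums are finite and no convergence issues arise. The only point needing care is bookkeeping. First, the bijection between permutations and partitions decorated with cycles must be stated precisely. Second, one must note that the uniqueness in Möbius inversion is what upgrades the matching cluster expansion to equality with the \(u^{(k)}\) of Definition~\ref{def:appendix-factorial-cumulant-densities}. Third, the diagonals are excluded only because \(u^{(k)}\) is defined off the diagonals; the algebraic identity itself holds pointwise everywhere.
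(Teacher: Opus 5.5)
Your proposal is correct and follows essentially the paper's argument. The paper justifies the lemma in one line, by comparing the permutation expansion of the determinantal correlation functions with the cluster expansion \eqref{eq:appendix-cluster-expansion} and using M\"obius inversion. Your write-up supplies the bookkeeping the paper leaves implicit: the bijection between permutations and partitions decorated with cycles, the factorization of the sign, and the induction that turns the matching cluster expansion into equality with \(u^{(k)}\).
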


\subsection{Reduced cumulants and Brillinger mixing}
\label{subsec:appendix-reduced-cumulants-brillinger-mixing}

\begin{definition}
\label{def:appendix-reduced-factorial-cumulant-measures}
Let \(\mathcal X\) be a stationary point process on \(\mathbb R\), and let
\(\gamma^{(k)}\) be its \(k\)-th factorial cumulant measure. The \(k\)-th reduced
factorial cumulant measure \(\gamma_{\operatorname{red}}^{(k)}\) is determined by
\begin{equation}
\label{eq:appendix-reduced-factorial-cumulant-measure}
\int_{\mathbb R^k}F(x_1,\ldots,x_k)\,
\gamma^{(k)}(\mathrm dx_1\cdots\mathrm dx_k)
=
\int_{\mathbb R}\int_{\mathbb R^{k-1}}
F(x,x+t_2,\ldots,x+t_k)\,
\gamma_{\operatorname{red}}^{(k)}(\mathrm dt_2\cdots\mathrm dt_k)\,\mathrm dx
\end{equation}
for every bounded measurable compactly supported \(F:\mathbb R^k\to\mathbb R\).
\end{definition}

\begin{definition}
\label{def:appendix-reduced-factorial-cumulant-densities}
If \(\gamma_{\operatorname{red}}^{(k)}\) is absolutely continuous with respect to
Lebesgue measure on \(\mathbb R^{k-1}\), its density \(c^{(k)}\), defined by
\[
\gamma_{\operatorname{red}}^{(k)}(\mathrm dt_2\cdots\mathrm dt_k)
=c^{(k)}(t_2,\ldots,t_k)\,\mathrm dt_2\cdots\mathrm dt_k,
\]
is called the
\(k\)-th reduced factorial cumulant density.
\end{definition}

\begin{definition}[Brillinger mixing]
\label{def:appendix-brillinger-mixing}
A stationary point process \(\mathcal X\) on \(\mathbb R\) is Brillinger mixing
if \(|\gamma_{\operatorname{red}}^{(k)}|(\mathbb R^{k-1})<\infty\) for every
\(k\ge2\).
\end{definition}

Applying Definition~\ref{def:appendix-factorial-cumulants-linear-statistics} together with the reduced disintegration formula \eqref{eq:appendix-reduced-factorial-cumulant-measure} gives the following representation.

\begin{lemma}
\label{lem:appendix-reduced-cumulant-linear-statistic}
Let \(\mathcal X\) be stationary, and suppose that its \(k\)-th reduced factorial
cumulant measure has density \(c^{(k)}\). Let \(\varphi:\mathbb R\to\mathbb R\)
be bounded, measurable, and compactly supported. If the integral below is
absolutely convergent, then
\[
\operatorname{cum}_k^!\left(\int_{\mathbb R}\varphi\,\mathrm d\mathcal X\right)
=
\int_{\mathbb R^k}
\varphi(x_1)\cdots\varphi(x_k)
c^{(k)}(x_2-x_1,\ldots,x_k-x_1)
\,\mathrm dx_1\cdots\mathrm dx_k .
\]
\end{lemma}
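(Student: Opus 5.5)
This is a standard measure-theoretic disintegration argument: unwind the definitions, change variables from \(x_2,\ldots,x_k\) to the gaps \(t_j=x_j-x_1\), and apply Fubini.

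By Definition~\ref{def:appendix-factorial-cumulants-linear-statistics}, the left-hand side equals
\[
\int_{\mathbb R^k}\varphi(x_1)\cdots\varphi(x_k)\,\gamma^{(k)}(\mathrm dx_1\cdots\mathrm dx_k).
\]
The function \(F(x_1,\ldots,x_k):=\varphi(x_1)\cdots\varphi(x_k)\) is bounded, measurable and compactly supported on \(\mathbb R^k\), since its support lies in \((\operatorname{supp}\varphi)^k\). Hence the reduced disintegration identity \eqref{eq:appendix-reduced-factorial-cumulant-measure} applies directly and gives
\[
\int_{\mathbb R}\int_{\mathbb R^{k-1}}\varphi(x)\varphi(x+t_2)\cdots\varphi(x+t_k)\,c^{(k)}(t_2,\ldots,t_k)\,\mathrm dt_2\cdots\mathrm dt_k\,\mathrm dx.
\]
Here I have inserted the density from Definition~\ref{def:appendix-reduced-factorial-cumulant-densities}.

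Next I would perform the change of variables \(x_1=x\) and \(x_j=x+t_j\) for \(2\le j\le k\). This linear map \(\mathbb R^k\to\mathbb R^k\) has unit Jacobian. Under it, the iterated integral becomes
\[
\int_{\mathbb R^k}\varphi(x_1)\cdots\varphi(x_k)\,c^{(k)}(x_2-x_1,\ldots,x_k-x_1)\,\mathrm dx_1\cdots\mathrm dx_k.
\]
The hypothesis that this last integral converges absolutely is what justifies the step. By Tonelli applied to the absolute values, the iterated integral in \((x,t)\) is also absolutely convergent. Fubini then allows the iterated integral to be rewritten as a single integral over \(\mathbb R^k\), and the change of variables holds at the level of integrable functions.

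The only step needing care is this interchange. The reduced measure \(\gamma^{(k)}_{\operatorname{red}}\) is signed and need not have finite total variation, because Brillinger mixing is not assumed here. So the inner integral in the disintegration must be understood with \(c^{(k)}\) integrated against the compactly supported product. That inner integral is finite for almost every \(x\) precisely under the absolute convergence assumption.

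I would therefore first verify the identity for nonnegative \(\varphi\) on the positive and negative parts of \(c^{(k)}\) separately, using the Jordan decomposition of \(\gamma^{(k)}_{\operatorname{red}}\). Then I would recombine the pieces, which is legitimate since both pieces are finite by absolute convergence. Finally, for real \(\varphi\), I would split \(F\) into positive and negative parts, again using absolute convergence to keep every term finite.
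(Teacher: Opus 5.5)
Your proposal is correct and matches the paper's argument. The paper derives the lemma in one line by applying Definition~\ref{def:appendix-factorial-cumulants-linear-statistics} and then the reduced disintegration formula \eqref{eq:appendix-reduced-factorial-cumulant-measure} with \(F=\varphi^{\otimes k}\), leaving implicit the density substitution, the unit-Jacobian change of variables \(x_j=x+t_j\), and the Fubini step that you spell out. Your final splitting into positive and negative parts is unnecessary, since the disintegration identity already holds for signed bounded \(F\) and absolute convergence alone justifies Fubini. Also, the inner integral is finite for every \(x\), not just almost every \(x\), because \(t\mapsto F(x,x+t)\) has bounded support and \(\gamma^{(k)}_{\operatorname{red}}\) is boundedly finite.
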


\section{Tightness tools}
\label{app:tightness}

This appendix records the Skorokhod-space and tightness facts used in the proof
of the functional central limit theorem. We include only the results needed in
the main text. Standard references include
\cite{Billingsley1999,Whitt2002,JacodShiryaev2003,Kallenberg2021}.

\subsection{The Skorokhod space}
\label{subsec:appendix-skorokhod-space}

Let \(D([0,1])\) denote the space of real-valued c\`adl\`ag functions on
\([0,1]\), and let \(\Lambda\) be the set of strictly increasing continuous
bijections of \([0,1]\). We use the Skorokhod \(J_1\) topology induced by
\begin{equation}
\label{eq:appendix-skorokhod-j1-metric}
d_{J_1}(x,y)
:=
\inf_{\lambda\in\Lambda}
\left\{
\sup_{0\le t\le1}|\lambda(t)-t|
\vee
\sup_{0\le t\le1}|x(t)-y(\lambda(t))|
\right\}.
\end{equation}
This metric is used only to describe the \(J_1\) topology. Although it is not
complete in general, the same topology is induced by equivalent complete
metrics, so \(D([0,1])\) is Polish. All tightness and weak-convergence statements
below refer to this topology.

By taking the identity time change in \eqref{eq:appendix-skorokhod-j1-metric}, we have
\(d_{J_1}(x,y)\le \sup_{0\le t\le1}|x(t)-y(t)|\) for \(x,y\in D([0,1])\).
Consequently, uniform convergence on \(C([0,1])\) implies convergence in the Skorokhod \(J_1\) topology, so the inclusion \(C([0,1])\hookrightarrow D([0,1])\) is continuous.

\subsection{Tightness and convergence criteria}
\label{subsec:appendix-tightness-criteria}

\begin{definition}
\label{def:appendix-tightness}
Let \(S\) be a metric space with its Borel \(\sigma\)-field. A family of
\(S\)-valued random elements \((Y_\alpha)_{\alpha\in A}\) is tight if, for every
\(\varepsilon>0\), there exists a compact set \(K\subset S\) such that
\(\sup_{\alpha\in A}\mathbb P(Y_\alpha\notin K)<\varepsilon\).
\end{definition}

\begin{lemma}[Kallenberg~{\cite[Theorem~23.7]{Kallenberg2021}}]
\label{lem:kolmogorov-chentsov-tightness}
Let \((Y_n)_{n\ge1}\) be a family of continuous processes on \([0,1]\). Suppose
that \(Y_n(0)=0\) for all \(n\), and that there exist constants \(C<\infty\),
\(\alpha>0\), and \(\beta>1\) such that
\(\mathbb E[|Y_n(t)-Y_n(s)|^\alpha]\le C|t-s|^\beta\) for all
\(0\le s<t\le1\) and all \(n\). Then \((Y_n)_{n\ge1}\) is tight in
\(C([0,1])\), equipped with the supremum norm.
\end{lemma}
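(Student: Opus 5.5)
This is the Kolmogorov--Chentsov criterion in \(C([0,1])\), quoted from Kallenberg. I would reduce it to the two standard ingredients of Prohorov's theorem on \(C([0,1])\): a uniform bound on the starting value, and a uniform modulus-of-continuity estimate.

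\textbf{Step 1 (starting value).} The hypothesis \(Y_n(0)=0\) makes the family \((Y_n(0))_{n\ge1}\) trivially tight. By the Arzel\`a--Ascoli characterization of relatively compact subsets of \(C([0,1])\), it then suffices to show that for every \(\varepsilon>0\),
\[
\lim_{\delta\downarrow0}\sup_{n}\mathbb P\bigl(w(Y_n,\delta)>\varepsilon\bigr)=0,
\qquad
w(x,\delta):=\sup_{|t-s|\le\delta}|x(t)-x(s)|.
\]

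\textbf{Step 2 (chaining on dyadics).} Fix \(0<\gamma<(\beta-1)/\alpha\). For each level \(m\), let \(D_m\) be the set of dyadic points \(k2^{-m}\), and set
\[
M_{n,m}:=\max_k|Y_n((k+1)2^{-m})-Y_n(k2^{-m})|.
\]
Markov's inequality and a union bound over the \(2^m\) increments give
\[
\mathbb P\bigl(M_{n,m}>2^{-\gamma m}\bigr)\le 2^m\cdot C2^{-\beta m}2^{\alpha\gamma m}=C2^{-m(\beta-1-\alpha\gamma)},
\]
which is summable in \(m\) uniformly in \(n\).

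\textbf{Step 3 (from dyadics to all times).} On the event \(\{M_{n,m}\le2^{-\gamma m}\text{ for all }m\ge m_0\}\), the usual chaining argument connects any two dyadic points \(s,t\) with \(|t-s|\le2^{-m_0}\) through at most two increments at each level \(m\ge m_0\). This yields
\[
|Y_n(t)-Y_n(s)|\le 2\sum_{m\ge m_0}2^{-\gamma m}\le C_\gamma 2^{-\gamma m_0}.
\]
By continuity of the sample paths, the same bound extends to all \(s,t\in[0,1]\). Hence, choosing \(m_0\) with \(2^{-m_0}\approx\delta\) and \(C_\gamma2^{-\gamma m_0}<\varepsilon\), we get
\[
\sup_n\mathbb P(w(Y_n,\delta)>\varepsilon)\le\sum_{m\ge m_0}C2^{-m(\beta-1-\alpha\gamma)}\to0.
\]
Combined with Step 1, this gives tightness by Prohorov's theorem on the Polish space \(C([0,1])\).

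\textbf{Main obstacle.} The only delicate point is the chaining bookkeeping in Step 3: one must ensure the level-by-level bounds hold uniformly in \(n\). That uniformity is automatic here because the constant \(C\) in the moment hypothesis does not depend on \(n\).
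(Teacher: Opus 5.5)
Your proposal is correct. It is the standard dyadic-chaining proof of the Kolmogorov--Chentsov tightness criterion, and the paper gives no proof of its own: it only cites Kallenberg. The textbook route there uses the same chaining, packaged as a uniform-in-\(n\) moment bound on the H\"older constants (the Kolmogorov--Chentsov continuity theorem) followed by the Arzel\`a--Ascoli tightness criterion. You instead bound the tail probability of the modulus \(w(Y_n,\delta)\) directly.

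One cosmetic correction: the final appeal to Prohorov's theorem is unnecessary. Choose \(\delta_k\) with \(\sup_n\mathbb P(w(Y_n,\delta_k)>1/k)<\varepsilon 2^{-k}\) and set \(K=\{x:\ x(0)=0,\ w(x,\delta_k)\le 1/k\ \text{for all } k\}\). By Arzel\`a--Ascoli, \(K\) is compact and \(\sup_n\mathbb P(Y_n\notin K)\le\varepsilon\), which is exactly the paper's definition of tightness.
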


\begin{lemma}[Billingsley~{\cite[Section~13]{Billingsley1999}}]
\label{lem:tightness-plus-finite-dimensional-convergence}
Let \((Y_n)_{n\ge1}\) be a tight family of \(D([0,1])\)-valued random elements.
Suppose that the finite-dimensional distributions of \(Y_n\) converge to those
of a process \(Y\), and that \(Y\) has continuous sample paths almost surely.
Then \(Y_n\Rightarrow Y\) in \(D([0,1])\), equipped with the Skorokhod \(J_1\)
topology.
\end{lemma}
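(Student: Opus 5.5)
The plan is the classical Prokhorov-type argument: first use tightness to extract subsequential limits, then identify every such limit through its finite-dimensional distributions. Since \(D([0,1])\) with the \(J_1\) topology is Polish (Section~\ref{subsec:appendix-skorokhod-space}), Prokhorov's theorem shows that the tight family \((Y_n)\) is relatively compact in distribution. Hence it suffices to show that every subsequential weak limit \(Z\) of \((Y_n)\) has the same law as \(Y\), viewed as a random element of \(C([0,1])\subset D([0,1])\). Indeed, if every subsequence of \((Y_n)\) has a further subsequence converging to the law of \(Y\), then \(Y_n\Rightarrow Y\).

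Fix a subsequence with \(Y_{n'}\Rightarrow Z\) in \(D([0,1])\). For \(t_1,\ldots,t_k\in[0,1]\), let \(\pi_{t_1,\ldots,t_k}(x):=(x(t_1),\ldots,x(t_k))\). The key fact from Skorokhod-space theory (Billingsley, Section~12) is that \(\pi_t\) is \(J_1\)-continuous at every \(x\) that is continuous at \(t\). Moreover, \(\pi_0\) and \(\pi_1\) are continuous everywhere, because every \(\lambda\in\Lambda\) fixes \(0\) and \(1\). Let
\[
T_Z:=\{t\in(0,1):\mathbb P(Z(t)\neq Z(t-))=0\}\cup\{0,1\}.
\]
A c\`adl\`ag path has at most countably many jumps, and \(\mathbb P(|Z(t)-Z(t-)|>\varepsilon)>0\) can hold for only countably many \(t\) for each fixed \(\varepsilon\). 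Therefore the complement of \(T_Z\) is countable, and \(T_Z\) is dense and contains \(1\). For \(t_1,\ldots,t_k\in T_Z\), the map \(\pi_{t_1,\ldots,t_k}\) is continuous \(Z\)-almost surely. The continuous mapping theorem then gives \(\pi_{t_1,\ldots,t_k}(Y_{n'})\Rightarrow\pi_{t_1,\ldots,t_k}(Z)\). By hypothesis the left side also converges to \(\pi_{t_1,\ldots,t_k}(Y)\). So \(Z\) and \(Y\) have the same finite-dimensional laws on \(T_Z\).

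The remaining step, which I expect to be the main point requiring care, is that finite-dimensional distributions indexed by a dense set \(T\subset[0,1]\) containing \(1\) determine a law on \(D([0,1])\). I would prove it by showing that the Borel \(\sigma\)-field of \((D,J_1)\) coincides with \(\sigma(\pi_t:t\in T)\). There are two inclusions.

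First, each \(\pi_t\) is Borel measurable. For \(t<1\), write \(\pi_t(x)=\lim_{\varepsilon\downarrow0}\varepsilon^{-1}\int_t^{t+\varepsilon}x(s)\,\mathrm ds\), using right-continuity; the averaging maps are \(J_1\)-continuous. For \(t=1\), \(\pi_1\) is continuous, as noted above.

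Second, right-continuity together with density of \(T\) lets one recover \(x\) from \((x(t))_{t\in T}\) through measurable approximations. From this one shows that open \(J_1\)-balls lie in \(\sigma(\pi_t:t\in T)\) (Billingsley, Theorem~12.5). Since \(D\) is separable, the Borel \(\sigma\)-field is generated by countably many such balls, so it is contained in \(\sigma(\pi_t:t\in T)\).

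Given this, a \(\pi\)-system argument shows that the law of \(Z\) equals the law of \(Y\) on \(D([0,1])\). Combined with the subsequence principle from the first step, this proves \(Y_n\Rightarrow Y\).
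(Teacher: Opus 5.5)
Your proposal is correct and is essentially the standard proof of the result the paper cites without proof (Billingsley, Theorem~13.1): Prokhorov's theorem with the subsequence principle, identification of each subsequential limit $Z$ through the projections $\pi_{t_1,\ldots,t_k}$ at times in the co-countable set $T_Z\ni 1$ via the continuous mapping theorem, and Billingsley's Theorem~12.5 that such projections generate the Borel $\sigma$-field of $D([0,1])$. Since finite-dimensional convergence is assumed at all times, the continuity of $Y$ is needed in your argument only to regard $Y$ as a $D([0,1])$-valued random element.
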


\end{appendix}

\section*{Acknowledgments}
The authors would like to thank Xiang Fang and Shengzhao Hou for helpful discussions, valuable insights, and constructive comments, which have contributed to the improvement of the present work. The authors are also grateful for their encouragement and continued interest in this project.

The authors used ChatGPT to assist with language editing, LaTeX formatting, and exposition refinement; all AI-assisted material was verified by the authors, who take full responsibility for the final manuscript.

\end{document}